\theoremstyle{plain}
\newtheorem{theorem}{Theorem}[section]
\newtheorem{corollary}[theorem]{Corollary}
\newtheorem{proposition}[theorem]{Proposition}
\newtheorem{lemma}[theorem]{Lemma}
\theoremstyle{definition}
\newtheorem{definition}[theorem]{Definition}
\newtheorem{remark}[theorem]{Remark}
\theoremstyle{remark}
\numberwithin{equation}{section}
\let\c@theorem\c@table
\newcommand{\ba}{\mathbf{a}}
\newcommand{\bc}{\mathbf{c}}
\newcommand{\bd}{\mathbf{d}}
\newcommand{\bn}{\mathbf{n}}
\newcommand{\bp}{\mathcal{P}}
\newcommand{\bu}{\mathbf{u}}
\newcommand{\bv}{\mathbf{v}}
\newcommand{\bw}{\mathbf{w}}
\newcommand{\cals}{\mathcal{S}}
\newcommand{\diag}{\mathop{\mathrm{diag}}\nolimits}
\newcommand{\Gr}{\mathop{\mathrm{Gr}}\nolimits}
\newcommand{\Id}{\mathrm{Id}}
\newcommand{\rd}{\mathrm{d}}
\newcommand{\vi}{\varepsilon}
\newcommand{\one}[1]{\mathbf{1}_{#1 \times #1}}
\newcommand{\sgn}{\mathop{\mathrm{sgn}}\nolimits}
\newcommand{\std}{\,\rd}
\newcommand{\commen}[1]{}
\newcommand{\R}{\mathbb{R}}
\newcommand{\Q}{\mathbb{Q}}
\newcommand{\C}{\mathbb{C}}
\newcommand{\F}{\mathbb{F}}
\renewcommand{\Re}{\mathop{\mathrm{Re}}}
\begin{document}
\title{Matrix positivity preservers in fixed dimension. I}

\author{Alexander Belton$^*$}
\address{$^*$Lancaster University, Lancaster, UK;
email: {\tt a.belton@lancaster.ac.uk}}

\author{Dominique Guillot$^\dag$}
\address{$^\dag$University of Delaware, Newark, DE, USA;
email: {\tt dguillot@udel.edu}}

\author{Apoorva Khare$^\ddag$}
\address{$^\ddag$Stanford University, Stanford, CA, USA;
email: {\tt khare@stanford.edu}}

\author{Mihai Putinar$^\mathsection$}
\address{$^\mathsection$University of California at Santa Barbara, CA, USA and
Newcastle University, Newcastle upon Tyne, UK;
email: {\tt mputinar@math.ucsb.edu, mihai.putinar@ncl.ac.uk}}

\date{\today}

\keywords{positive definite matrix, Hadamard product, Schur
polynomial, entrywise function, linear matrix inequality, correlation
matrix, spectrahedron, characteristic value}

\subjclass[2010]{15A45 (primary); %
05E05, 15A12, 15B48, 26C05, 62H99, 65F15 (secondary)}

\begin{abstract}
A classical theorem proved in 1942 by I.J. Schoenberg describes all
real-valued functions that preserve positivity when applied entrywise
to positive semidefinite matrices of arbitrary size; such functions
are necessarily analytic with non-negative Taylor coefficients.
Despite the great deal of interest generated by this theorem, a
characterization of functions preserving positivity for matrices of
fixed dimension is not known.

In this paper, we provide a complete description of
polynomials of degree~$N$ that preserve positivity when applied
entrywise to matrices of dimension~$N$. This is the key step for us
then to obtain negative lower bounds on the coefficients
of analytic functions so that these functions preserve positivity in a
prescribed dimension. The proof of the main technical inequality is
representation theoretic, and employs the theory of Schur
polynomials. Interpreted in the context of linear pencils of matrices,
our main results provide a closed-form expression for the lowest
critical value, revealing at the same time an unexpected spectral
discontinuity phenomenon.

Tight linear matrix inequalities for Hadamard powers of matrices and a
sharp asymptotic bound for the matrix-cube problem involving Hadamard
powers are obtained as applications. Positivity preservers are also
naturally interpreted as solutions of a variational inequality
involving generalized Rayleigh quotients. This optimization approach
leads to a novel description of the simultaneous kernels of Hadamard
powers, and a family of stratifications of the cone of positive
semidefinite matrices.
\end{abstract}
\maketitle

\tableofcontents

\section{Introduction and main results}\label{Sintro}

Transformations, linear or not, which preserve matrix
structures with positivity constraints have been recently studied in at
least three distinct frameworks:
statistical mechanics and the geometry of polynomials
\cite{Borcea-Branden-1, Borcea-Branden-2,Branden};
global optimization algorithms based on the cone of hyperbolic or
positive definite polynomials \cite{SIAM-optimization, HMPV, Renegar};
the statistics of big data, having the correlation matrix of a large
number of random variables as the central object \cite{bickel_levina,
hero_rajaratnam, Li_Horvath, Rothman_et_al_JASA, Zhang_Horvath}.
The present article belongs in the latter two categories, although the
main result may be of independent algebraic interest.

To describe the contents of this paper, we adopt some
terminology. For a set $K \subset \C$ and an integer $N \geq 1$,
denote by $\bp_N( K )$ the cone of positive semidefinite $N \times N$
matrices with entries in $K$. A function $f: K \to \C$ naturally acts
entrywise on $\bp_N( K )$, so that $f[ A ] := ( f( a_{i j}) )$ for any
$A = ( a_{i j} ) \in \bp_N( K )$. Akin to the theory of positive
definite functions, it is natural to seek characterizations of those
functions $f$ such that $f[ A ]$ is positive semidefinite for all
$A \in \bp_N( K )$.
A well-known theorem of Schoenberg
\cite{Schoenberg42} states that $f[ A ]$ is positive semidefinite for
all $A \in \bp_N( [ {-1}, 1 ] )$ of all dimensions $N \geq 1$ if and
only if $f$ is absolutely monotonic on $[0,1]$ (i.e., analytic with
non-negative Taylor coefficients).
To put Schoenberg's 1942 article in historical perspective, we have to
recall that the theory of absolute monotone functions
was already established by S.~Bernstein \cite{Bernstein}.
Also, it is worth mentioning that Schoenberg was working around that time
on the related and more general question of isometrically embedding
positive definite metrics into Hilbert space; see, for instance,
\cite{vonNeumann-Schoenberg}.
The parallel theory of matrix monotone functions, with $f( A )$ defined
by standard functional calculus, owes its main
result to Loewner \cite{Loewner34} (see also \cite{Donoghue_74}).

Since its publication, Schoenberg's theorem has attracted a great
deal of attention.
The result has been considered in several different contexts in
\cite{BCR-semigroups, Bochner-pd, Bochner-zonal, Christensen_et_al78,
Herz63, Rudin59}.
See also \cite{Berg-Porcu, gneiting2013, GMP, Hiai2009, menegatto_2001}
for recent work.

Schoenberg's observation
has a natural application to high-dimensional probability and statistics.
Recall that a correlation matrix is the Gram matrix of vectors on a
sphere $S^{d - 1}$. In concrete situations, functions are often applied
entrywise to correlation matrices, in order to improve their properties,
such as better conditioning, or to induce a Markov random field
structure; among the recent investigations centered on this technique we
note
\cite{bickel_levina, Guillot_Rajaratnam2012, Guillot_Rajaratnam2012b,
hero_rajaratnam, Hero_Rajaratnam2012, Li_Horvath, Rothman_et_al_JASA,
Zhang_Horvath}.
Whether or not the resulting matrices are positive semidefinite is
critical for the validity of these procedures. According to
Schoenberg's theorem, functions preserving positivity when applied
entrywise to correlation matrices of all dimensions and bounded rank
are non-negative combinations of Gegenbauer polynomials. However,
allowing for arbitrary dimension is unnecessarily restrictive, as the
state space of the problem is usually known; at the least, its dimension
often has an apparent upper bound. Motivated by such practical demands,
characterizations of positivity preserving functions have recently been
obtained in fixed dimensions, under further constraints that arise in
practice; see, for example,
\cite{GKR-lowrank, Guillot_Khare_Rajaratnam2012,
Guillot_Rajaratnam2012b}.

In the case of a fixed dimension $N$, obtaining characterizations of
functions which preserve matrix positivity when applied entrywise remains
a difficult open problem, even discouraging in view of the scarcity of
known results for low rank and low degree.
Using an idea of Loewner, Horn showed in \cite{horn}
that if a continuous function
$f : ( 0, \infty) \to \R$ satisfies
$f[ - ]: \bp_N( ( 0, \infty ) ) \to \bp_N( \R )$, then
$f \in C^{N - 3}( ( 0, \infty ) )$ and $f^{( k )}( x ) \geq 0$ for all
$x > 0$ and all $0 \leq k \leq N - 3$. Moreover, if it is known that
$f \in C^{N - 1}( ( 0, \infty ) )$, then $f^{( k )}( x ) \geq 0$ for
all $x > 0$ and all $0 \leq k \leq N - 1$.

As of today, obtaining an effective characterization of arbitrary
functions which preserve matrix positivity in a fixed dimension looks
rather inaccessible. The main result of the present article provides the
first such characterization for polynomial functions, and illustrates the
complexity of the coefficient bounds, even in the simplest of situations.
 
\begin{theorem}\label{Tthreshold}
Fix $\rho > 0$ and integers $N \geq 1$, $M \geq 0$ and let
$f( z ) = \sum_{j = 0}^{N - 1} c_j z^j + c' z^M$ be a
polynomial with real coefficients.
Also denote by $\overline{D}( 0, \rho )$ the closed disc in $\C$
with radius $\rho > 0$ and center the origin.
For any vector
$\bd := ( d_0, \ldots, d_{N - 1} )$ with non-zero entries, let
\begin{equation}\label{Erankone}
\mathcal{C}( \bd ) = \mathcal{C}( \bd; z^M; N, \rho ) := %
\sum_{j = 0}^{N - 1} \binom{M}{j}^2 \binom{M - j - 1}{N - j - 1}^2 %
\frac{\rho^{M - j}}{d_j}, 
\end{equation}
and let $\bc := ( c_0, \ldots, c_{N - 1} )$. The following are
equivalent.
\begin{enumerate}
\item $f[ - ]$ preserves positivity on
$\bp_N( \overline{D}( 0, \rho ) )$.

\item The coefficients $c_j$ satisfy either
$c_0$, \ldots, $c_{N - 1}$, $c' \geq 0$, or
$c_0$, \ldots, $c_{N - 1} > 0$ and
$c' \geq -\mathcal{C}( \bc )^{-1}$.

\item $f[ - ]$ preserves positivity on $\bp_N^1( ( 0, \rho ) )$, the
set of matrices in $\bp_N( ( 0, \rho ) )$ having rank at most~$1$.
\end{enumerate}
\end{theorem}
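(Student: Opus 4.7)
The plan is to prove the cycle $(1)\Rightarrow(3)\Rightarrow(2)\Rightarrow(1)$. The first implication is immediate from $\bp_N^1((0,\rho))\subset\bp_N(\overline{D}(0,\rho))$.

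For $(3)\Rightarrow(2)$, the natural test matrices are the rank-one matrices $A=\bu\bu^T$ with $\bu=(u_1,\dots,u_N)\in(0,\sqrt{\rho})^N$ having pairwise distinct coordinates. Using $(u_iu_j)^k=u_i^ku_j^k$ one has
\begin{equation*}
f[\bu\bu^T]\;=\;V_\bu\diag(\bc)V_\bu^T+c'\,\bu^{\circ M}(\bu^{\circ M})^T,
\end{equation*}
where $V_\bu$ is the $N\times N$ Vandermonde matrix with $(i,k)$-entry $u_i^k$ for $k=0,\dots,N-1$ and $\bu^{\circ j}$ denotes the entrywise $j$-th power. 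When $c_0,\dots,c_{N-1}>0$, invertibility of $V_\bu$ together with the Schur complement criterion for rank-one perturbations of a positive definite matrix recasts $f[\bu\bu^T]\succeq 0$ as
\begin{equation*}
\sum_{j=0}^{N-1}\frac{w_j(\bu)^2}{c_j}\;\leq\;-\frac{1}{c'},\qquad \bw(\bu):=V_\bu^{-1}\bu^{\circ M}.
\end{equation*}
Cramer's rule writes $w_j(\bu)$ as a ratio of generalised Vandermonde determinants; the bialternant formula then identifies this ratio (up to sign) with the Schur polynomial $s_{\lambda^{(j)}}(\bu)$ attached to the partition $\lambda^{(j)}=(M-N+1,1^{N-j-1},0^j)$. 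A hook content computation yields
\begin{equation*}
s_{\lambda^{(j)}}(1,\dots,1)\;=\;\binom{M}{j}\binom{M-j-1}{N-j-1}.
\end{equation*}
Since every Schur polynomial is monomial positive, $s_{\lambda^{(j)}}(\bu)^2$ is non-decreasing in each coordinate on $[0,\sqrt{\rho}]^N$; hence the supremum of $\sum_j w_j(\bu)^2/c_j$ over admissible $\bu$ is attained by continuity in the limit $\bu\to(\sqrt{\rho},\dots,\sqrt{\rho})$ and equals exactly $\mathcal{C}(\bc)$. This forces $c'\geq-\mathcal{C}(\bc)^{-1}$. That the $c_j$ must moreover be strictly positive when $c'<0$ (and non-negative otherwise) falls out of the same rank-one family through further degenerations that isolate individual coefficients.

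The sufficiency $(2)\Rightarrow(1)$ is the main technical content. When every coefficient is non-negative the Schur product theorem yields $f[A]\succeq 0$ directly. In the non-trivial regime $c'\in[-\mathcal{C}(\bc)^{-1},0)$ with all $c_j>0$, I would write $A=UU^*\in\bp_N(\overline{D}(0,\rho))$ with rows $\bu_i\in\C^r$ and invoke the identity $\bv^*A^{\circ k}\bv=\|\sum_i\overline{v_i}\,\bu_i^{\otimes k}\|^2$ to recast the target inequality as
\begin{equation*}
\Big\|\sum_i\overline{v_i}\,\bu_i^{\otimes M}\Big\|^2\;\leq\;\frac{1}{-c'}\sum_{k=0}^{N-1}c_k\Big\|\sum_i\overline{v_i}\,\bu_i^{\otimes k}\Big\|^2,\qquad \bv\in\C^N.
\end{equation*}
The plan is then to expand each $\bu_i^{\otimes M}$ along a basis of $\mathrm{Sym}^M(\C^r)$ adapted to the first $N$ symmetric powers through Schur polynomial identities, and to compare termwise with the sharp rank-one bound derived above. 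The main obstacle is precisely this reduction: showing that higher rank does not enlarge the Rayleigh-type quotient beyond its rank-one extremum. This is the representation-theoretic core announced in the abstract, and its sharpness also explains the spectral discontinuity flagged in the introduction, since the threshold $-\mathcal{C}(\bc)^{-1}$ is dictated by the limit of full-rank Vandermonde perturbations rather than by the rank-deficient corner matrix $\rho J$, at which the weaker condition $f(\rho)\geq 0$ alone would suffice.
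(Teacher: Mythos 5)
Your $(1)\Rightarrow(3)$ and $(3)\Rightarrow(2)$ steps are essentially the paper's: testing on $\bu\bu^T$ with $\bu\in(0,\sqrt\rho)^N$, factoring $f[\bu\bu^T]=V_\bu\diag(\bc)V_\bu^T+c'\bu^{\circ M}(\bu^{\circ M})^T$, identifying $V_\bu^{-1}\bu^{\circ M}$ via Cramer's rule and the bialternant formula with Schur polynomials $s_{\mu(M,N,j)}$ of hook shape, and evaluating at $(1,\dots,1)$. The paper phrases this as a direct determinant factorization (its Theorem~3.3, via Cauchy--Binet), whereas you phrase it as a Schur complement; these are interchangeable once the rank structure is exploited. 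You also sweep the positivity of $c_0,\dots,c_{N-1}$ under ``further degenerations''; this does need an argument (the paper's Lemma~2.4), but it is routine.

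The genuine gap is in $(2)\Rightarrow(1)$, and you flag it yourself: ``the main obstacle is precisely this reduction.'' What you have is not a proof but a proposal to expand $\bu_i^{\otimes M}$ in $\mathrm{Sym}^M(\C^r)$ and compare termwise with the rank-one extremum, and you do not explain why the Rayleigh-type quotient should not increase with rank. That comparison is exactly what must be established, and it does not follow from the tensor-power identity alone. The paper circumvents this entirely with a two-layer induction. First, it establishes the rank-one case over the complex disc $\overline{D}(0,\rho)$ (not just $(0,\rho)$) by inducting on the size $m$ of principal submatrices, using a strictly increasing chain of constants $C_1<\cdots<C_N=\mathcal{C}(\bc;z^M;N,\rho)$ and the triangle inequality applied to the monomial-positive Schur polynomials to bound $|s_{\mu(M-N+m,m,j)}(\bu_\bn)|$. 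Second, it bootstraps from rank one to arbitrary rank by a different induction on $N$: the FitzGerald--Horn lemma subtracts a rank-one summand $\bu\bu^*$ so that $A-\bu\bu^*$ has zero last row and column, the integral identity
\begin{equation*}
p_t[A] = p_t[\bu\bu^*] + \int_0^1 (A-\bu\bu^*)\circ M\,p_{t/M}\bigl[\lambda A + (1-\lambda)\bu\bu^*;\,M-N,\,\bc'\bigr]\,d\lambda
\end{equation*}
reduces the question to dimension $N-1$ with shifted data $\bc'=(c_1,2c_2,\dots,(N-1)c_{N-1})$ and exponent $M-1$, and the crucial numerical inequality $\mathcal{C}(\bc;z^M;N,\rho)\geq M\cdot\mathcal{C}(\bc';z^{M-1};N-1,\rho)$ makes the induction close. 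None of this machinery appears in your outline, and without some substitute for it, the sufficiency direction remains unproved.
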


\noindent
Note that the necessity of having $c_0$, \ldots, $c_{N - 1} \geq 0$ in
part~(2) of the theorem follows from Horn's theorem as stated above.
The constant $\mathcal{C}( \bc; z^M; N, \rho )$ provides a threshold
for polynomials that preserve positivity on $\bp_N$ but not on
$\bp_{N + 1}$. Our theorem thus provides a quantitative version in
fixed dimension of Schoenberg's result, as well as of Horn's result.
As should be expected, our bound goes asymptotically to the
Schoenberg degree-free statement, since
$\mathcal{C}( \bc; z^M; N, \rho ) \to \infty$ as $N \to \infty$.
It is remarkable that the proof of Theorem~\ref{Tthreshold} is obtained
by using Schur polynomials.

\begin{remark}\label{Rreferee}
Notice that if $M < N$ then $\mathcal{C}( \bc; z^M; N, \rho )$ makes
sense and equals $c_M^{-1}$, if we use for any complex number
$z \in \C$ the formulas
\[
\binom{z}{n} := \frac{ z ( z - 1 ) \cdots ( z - n + 1 )}{n!} %
\quad \text{and} \quad \binom{z}{0} := 1.
\]
In effect, Theorem~\ref{Tthreshold} says in this case that when $f$ is a
polynomial of degree at most $N - 1$, the map $f[-]$ preserves
positivity on $\bp_N( \overline{D}( 0, \rho ) )$ if and only if all
coefficients of $f$ are non-negative.
\end{remark}

Theorem~\ref{Tthreshold} provides a decisive first step
towards isolating classes of functions that preserve positivity on
$\bp_N$ when applied entrywise. Additionally, the result yields a
wealth of interesting consequences that initiate the development of an
entrywise matrix calculus that leaves invariant the cone $\bp_N$,
in parallel and in contrast to the much better understood standard
functional calculus. The next theorem provides a constructive
criterion for preserving positivity, applicable to all analytic
functions. In the theorem and thereafter, the $N \times N$ matrix
with all entries equal to~$1$ is denoted by $\one{N}$, and
$A^{\circ k} := ( a_{i j}^k )$ denotes the $k$th Hadamard (or
entrywise, or Schur) power of $A$.

\begin{theorem}\label{Tanalytic}
Fix $\rho > 0$ and an integer $N \geq 1$. Let
$\bc := ( c_0, \ldots, c_{N - 1} ) \in ( 0, \infty )^N$, and
suppose $g( z ) := \sum_{M = N}^\infty c_M z^M$ is analytic on
$D( 0, \rho )$ and continuous on $\overline{D( 0, \rho )}$, with real
coefficients. Then
\begin{equation}
t ( c_0 {\bf 1}_{N \times N} + c_1 A + \cdots + %
c_{N - 1} A^{\circ ( N - 1 )} ) - g[ A ] \in \bp_N( \C )
\end{equation}
for all $A \in \bp_N( \overline{D}( 0, \rho ) )$ and all
\[
t \geq \sum_{M \geq N : c_M > 0} c_M \mathcal{C}( \bc; z^M; N, \rho).
\]
Moreover, this series is convergent, being bounded above
by
\begin{equation}\label{Eanalytic-bound}
\frac{g_2^{( 2 N - 2 )}( \sqrt{\rho} )}{2^{N - 1} ( N - 1)!^2} %
\sum_{j = 0}^{N - 1} \binom{N - 1}{j}^2 \frac{\rho^{N - j - 1}}{c_j},
\end{equation}
where $g_2( z ) := g_+( z^2 )$ and
\[
g_+( z ) := \sum_{M \geq N : c_M > 0} c_M z^M.
\]
\end{theorem}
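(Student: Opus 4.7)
The plan is to decompose $g$ into monomials, apply Theorem~\ref{Tthreshold} to each, and superpose. Write $p(z) := \sum_{j = 0}^{N - 1} c_j z^j$, so that $p[A] \in \bp_N(\C)$ for every $A \in \bp_N(\overline{D}(0, \rho))$, by the Schur product theorem together with positivity of the $c_j$. For each $M \geq N$ with $c_M > 0$, set $\mathcal{C}_M := \mathcal{C}(\bc; z^M; N, \rho)$ and consider
\[
h_M(z) := c_M \mathcal{C}_M \cdot p(z) - c_M z^M.
\]
Its coefficient vector in the first $N$ slots is $c_M \mathcal{C}_M \bc$, which is strictly positive; and since the map $\bd \mapsto \mathcal{C}(\bd; z^M; N, \rho)$ depends on the $d_j$ only through $1/d_j$, scaling yields $\mathcal{C}(c_M \mathcal{C}_M \bc; z^M; N, \rho) = \mathcal{C}_M/(c_M \mathcal{C}_M) = 1/c_M$. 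The leading coefficient $-c_M$ of $h_M$ therefore equals exactly $-\mathcal{C}(c_M \mathcal{C}_M \bc)^{-1}$, placing $h_M$ on the boundary of condition~(2) in Theorem~\ref{Tthreshold}. Hence $h_M[A] \in \bp_N(\C)$; that is,
\[
c_M \mathcal{C}_M \cdot p[A] - c_M A^{\circ M} \in \bp_N(\C),
\]
for every $A \in \bp_N(\overline{D}(0, \rho))$. For $M \geq N$ with $c_M < 0$, $-c_M A^{\circ M} \in \bp_N(\C)$ directly by the Schur product theorem. Writing $t_0 := \sum_{M \geq N,\ c_M > 0} c_M \mathcal{C}_M$ and summing these positive semidefinite matrices, for any $t \geq t_0$ one obtains
\[
t p[A] - g[A] = (t - t_0) p[A] + \sum_{M : c_M > 0} \bigl( c_M \mathcal{C}_M p[A] - c_M A^{\circ M} \bigr) + \sum_{M : c_M < 0} (-c_M) A^{\circ M},
\]
with entrywise convergence of $g[A]$ ensured by continuity of $g$ on $\overline{D(0, \rho)}$.

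For the numerical bound (\ref{Eanalytic-bound}), the decisive computation is the identity
\[
\binom{M}{j} \binom{M - j - 1}{N - j - 1} = \frac{M - N + 1}{M - j}\binom{M}{N - 1}\binom{N - 1}{j},
\]
valid for $0 \leq j \leq N - 1 \leq M - 1$, which one checks by expanding both sides as ratios of factorials. In this range $(M - N + 1)/(M - j) \in (0, 1]$; squaring and substituting into (\ref{Erankone}), then factoring $\rho^{M - j} = \rho^{M - N + 1}\rho^{N - j - 1}$, yields
\[
\mathcal{C}_M \leq \binom{M}{N - 1}^2 \rho^{M - N + 1} \sum_{j = 0}^{N - 1} \binom{N - 1}{j}^2 \frac{\rho^{N - j - 1}}{c_j},
\]
in which the $j$-sum on the right is exactly the $M$-independent factor of (\ref{Eanalytic-bound}).

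It remains to dominate $\sum_{M} c_M \binom{M}{N - 1}^2 \rho^{M - N + 1}$ by $g_2^{(2N - 2)}(\sqrt{\rho})/(2^{N - 1}(N - 1)!^2)$. For this, split the $2N - 2$ consecutive factors in
\[
\frac{(2M)!}{(2M - 2N + 2)!} = (2M)(2M - 1)(2M - 2) \cdots (2M - 2N + 3)
\]
into their even and odd parts. The even factors yield $2^{N - 1} M(M - 1) \cdots (M - N + 2) = 2^{N - 1}(N - 1)!\binom{M}{N - 1}$; the odd factors $(2M - 1)(2M - 3) \cdots (2M - 2N + 3)$ dominate $M(M - 1)\cdots(M - N + 2) = (N - 1)!\binom{M}{N - 1}$ term by term, because $2M - 2i + 1 \geq M - i + 1$ whenever $M \geq i$, which holds throughout the product since $1 \leq i \leq N - 1 < M$. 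Multiplying these two estimates gives
\[
\binom{M}{N - 1}^2 \leq \frac{1}{2^{N - 1} (N - 1)!^2} \cdot \frac{(2M)!}{(2M - 2N + 2)!},
\]
and summing with weights $c_M \rho^{M - N + 1}$ reconstructs the Taylor series for $g_2^{(2N - 2)}$ at $z = \sqrt{\rho}$, read as a monotone limit from inside the disc since $g_2$ has non-negative Taylor coefficients. This establishes (\ref{Eanalytic-bound}), and the convergence of $t_0$ whenever the right-hand side of (\ref{Eanalytic-bound}) is finite.

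The only genuinely non-trivial ingredient is the combinatorial identity in the second paragraph; it is what separates the dependence of $\mathcal{C}_M$ on $M$ from its dependence on $\bc$ and makes (\ref{Eanalytic-bound}) factor cleanly. Everything else is either a direct substitution into Theorem~\ref{Tthreshold} or an elementary pairing of factorials.
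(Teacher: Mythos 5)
Your proof is correct and follows the paper's approach: the first claim is obtained in both cases by applying Theorem~\ref{Tthreshold} monomial-by-monomial and superposing (the paper compresses this to one sentence by citing the subadditivity of $\mathcal{C}(h;\,\cdot\,;\bp)$ recorded in Equation~\eqref{Esubadditive}), and the numerical bound rests on the same inequality $\binom{M}{j}\binom{M-j-1}{N-j-1}\le\binom{N-1}{j}\binom{M}{N-1}$ followed by a comparison of $\binom{M}{N-1}^2$ with the product $(2M)(2M-1)\cdots(2M-2N+3)$. The only place you diverge is in the last estimate: you split those $2N-2$ consecutive factors into their even and odd halves and bound each below by $(N-1)!\binom{M}{N-1}$, whereas the paper pairs $(M-N+k)^2$ against $\tfrac12(2(M-N)+k+1)(2M-N+k)$ for $k=2,\dots,N$; the conclusions coincide, and your even/odd split is, if anything, the more transparent of the two routes. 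One point to tighten: where you invoke continuity on $\overline{D(0,\rho)}$ and ``a monotone limit from inside the disc'' to make sense of both the entrywise sum $g[A]$ on the boundary and the value $g_2^{(2N-2)}(\sqrt\rho)$, the paper instead runs a short dilation argument (replace $g$ by $g(rz)$ with $r<1$ so the series is analytic on a strictly larger disc, then let $r\uparrow 1$, using that $\bp_N(\C)$ is closed); spelling this out would remove the only soft spot in your write-up.
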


\noindent Note that Theorem~\ref{Tthreshold} concerns the special case of
Theorem~\ref{Tanalytic} with $g(z) = c_M z^M$.

Theorem~\ref{Tanalytic} provides a sufficient condition for a large
class of functions to preserve positive semidefiniteness in fixed
dimension. The loosening of the tight thresholds for the individual
coefficients is compensated in this case by the closed form of the bound
for the lowest eigenvalue of the respective matrix pencil.

Next we describe some consequences of our main results. For
$A \in \bp_N( K )$ and $f$ as in Theorem~\ref{Tthreshold} with
$M \geq N$, note that
\[
f[ A ] = c_0 \one{N} + c_1 A + \cdots + %
c_{N - 1} A^{\circ ( N - 1 )} + c_M A^{\circ M},
\]
where $c_M = c'$.
Understanding when $f[ A ]$ is positive semidefinite is thus equivalent
to controlling the spectrum of linear combinations of Hadamard powers
of~$A$, by obtaining linear inequalities of the form
\begin{equation*}
c_0 \one{N} + c_1 A + \cdots + c_{N - 1} A^{\circ ( N - 1 )} + %
c_M A^{\circ M} \geq 0,
\end{equation*}
where the order is the Loewner ordering, given by the cone
$\bp_N( \C )$.  A direct application of our main theorem provides a
sharp bound for controlling the Hadamard powers of positive
semidefinite matrices.

\begin{corollary}\label{Clmi}
Fix $\rho > 0$, integers $M \geq N \geq 1$, and scalars $c_0$, \ldots,
$c_{N - 1} > 0$. Then
\begin{equation}\label{Elmi}
A^{\circ M} \leq \mathcal{C}( \bc; z^M; N, \rho ) \cdot \bigl( %
c_0 \one{N} + c_1 A + \cdots + c_{N-1} A^{\circ (N - 1)} \bigr)
\end{equation}
for all $A \in \bp_N( \overline{D}( 0, \rho) )$. Moreover, the
constant $\mathcal{C}( \bc; z^M; N, \rho )$ is sharp.
\end{corollary}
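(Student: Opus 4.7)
The plan is to interpret the Loewner inequality (\ref{Elmi}) as the positive semidefiniteness of the entrywise image of one specific polynomial, and then to read off both the inequality and its sharpness from Theorem \ref{Tthreshold}, which does all the real work. In particular there is no new technical obstacle beyond matching notation and exploiting the trivial homogeneity of $\mathcal{C}$; in effect Corollary \ref{Clmi} is just a dual reformulation of Theorem \ref{Tthreshold}.

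Concretely, for a candidate constant $t > 0$ I would consider the polynomial $f(z) := t c_0 + t c_1 z + \cdots + t c_{N-1} z^{N-1} - z^M$, which has exactly the shape covered by Theorem \ref{Tthreshold}, with coefficient vector $(t c_0, \ldots, t c_{N-1})$ and leading coefficient $c' = -1$. For any $A \in \bp_N( \overline{D}( 0, \rho ) )$ one has
\[
f[A] = t \bigl( c_0 \one{N} + c_1 A + \cdots + c_{N-1} A^{\circ(N-1)} \bigr) - A^{\circ M},
\]
so (\ref{Elmi}) holding with constant $t$ is the same as $f[-]$ preserving positivity on $\bp_N( \overline{D}( 0, \rho ) )$. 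Since $t c_0, \ldots, t c_{N-1} > 0$ and $c' = -1 < 0$, the equivalence (1) $\Leftrightarrow$ (2) of Theorem \ref{Tthreshold} reduces to the single scalar condition $-1 \geq -\mathcal{C}( t \bc; z^M; N, \rho )^{-1}$. Inspection of the defining formula (\ref{Erankone}) shows that $\mathcal{C}( \,\cdot\,; z^M; N, \rho )$ is homogeneous of degree $-1$ in its first argument, so $\mathcal{C}( t \bc; z^M; N, \rho ) = \mathcal{C}( \bc; z^M; N, \rho ) / t$, and the condition collapses to $t \geq \mathcal{C}( \bc; z^M; N, \rho )$. Choosing the threshold value $t = \mathcal{C}( \bc; z^M; N, \rho )$ delivers (\ref{Elmi}).

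For the sharpness assertion, whenever $t < \mathcal{C}( \bc; z^M; N, \rho )$ the same computation gives $\mathcal{C}( t \bc; z^M; N, \rho ) > 1$, whence condition (2) of Theorem \ref{Tthreshold} fails. The equivalence (2) $\Leftrightarrow$ (3) of that theorem then supplies a rank-one matrix $A \in \bp_N^1( ( 0, \rho ) ) \subset \bp_N( \overline{D}( 0, \rho ) )$ for which $f[A]$ is not positive semidefinite, and this $A$ is a direct witness that (\ref{Elmi}) fails with the strictly smaller constant $t$. The substantive point, and the only nontrivial input, is the strength of Theorem \ref{Tthreshold}: namely, that the optimal threshold is already attained on rank-one matrices. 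Everything else in the proof of Corollary \ref{Clmi} is bookkeeping.
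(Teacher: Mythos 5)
Your proof is correct and matches what the paper intends: Corollary~\ref{Clmi} is stated as ``a direct application of our main theorem'' with no separate proof given, and your argument supplies exactly that application. The reduction via $f(z) = t c_0 + t c_1 z + \cdots + t c_{N-1} z^{N-1} - z^M$, the use of the degree-$(-1)$ homogeneity of $\mathcal{C}(\,\cdot\,; z^M; N, \rho)$ from formula~\eqref{Erankone}, and the appeal to equivalences (1)$\Leftrightarrow$(2) and (2)$\Leftrightarrow$(3) of Theorem~\ref{Tthreshold} for the bound and for sharpness (with a rank-one witness) are all correct and are the intended route.
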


As an immediate consequence of Lemma~\ref{Lpos-coeff} below, notice
that the right-hand side of Equation~\eqref{Elmi} cannot be replaced
by a sum of fewer than $N$ Hadamard powers of~$A$.
Corollary~\ref{Clmi} thus yields a sharp bound for controlling the
Hadamard power $A^{\circ M}$ with the smallest number of powers of
lower order.

In a different direction, our main result naturally fits into the fast
developing area of spectrahedra \cite{SIAM-optimization, Vinzant}
and the matrix cube problem \cite{Nemirovski}.
The latter, a key technical ingredient in modern
optimization theory, continues to attract the attention of various groups
of researchers, mostly in applied mathematics.
Recall that given real
symmetric $N \times N$ matrices $A_0$, \ldots, $A_{M + 1}$, where
$M \geq 0$, the corresponding matrix cubes are
\begin{equation}\label{Ecube}
\mathcal{U}[\eta] := \Bigl\{ %
A_0 + \sum_{m = 1}^{M + 1} u_m A_m : u_m \in [ {-\eta}, \eta ] \Bigr\}
\qquad ( \eta > 0 ).
\end{equation}
The matrix cube problem consists of determining whether
$\mathcal{U}[ \eta ] \subset \bp_N$, and finding the largest $\eta$
for which this is the case. As another consequence of our main result,
we obtain an asymptotically sharp bound for the matrix cube problem
when the matrices $A_m$ are Hadamard powers.

\begin{corollary}\label{Ccube}
Fix $\rho > 0$ and integers $M \geq 0$, $N \geq 1$.
Given a matrix $A \in \bp_N( \overline{D}( 0, \rho ) )$, let
\[
A_0 := c_0 {\bf 1}_{N \times N} + c_1 A + \cdots + %
c_{N - 1} A^{\circ (N - 1)}
\]
and
\[
A_m := A^{\circ (N-1+m)} \qquad \textrm{for } 1 \leq m \leq M + 1,
\]
where the coefficients $c_0$, \ldots, $c_{N - 1} > 0$. Then
\begin{align}
\eta \leq \Biggl( \sum_{m = 0}^M %
\mathcal{C}(\bc; z^{N+m}; N, \rho ) \Biggr)^{-1} & \ \Rightarrow \ %
\mathcal{U}[ \eta ] \subset \bp_N( \C ) \\
& \ \Rightarrow \ \eta \leq %
\mathcal{C}( \bc; z^{N+M}; N, \rho )^{-1}.
\end{align}
The upper and lower bounds for $\eta$ are asymptotically equal as
$N \to \infty$, i.e.,
\begin{equation}\label{Easymptotic}
\lim_{N \to \infty} \mathcal{C}( \bc; z^{N+M}; N, \rho )^{-1} %
\sum_{m = 0}^M \mathcal{C}( \bc; z^{N+m}; N, \rho ) = 1.
\end{equation}
\end{corollary}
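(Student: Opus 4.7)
The plan is to derive the two implications from results already established---the lower bound from Corollary \ref{Clmi}, the upper bound from Theorem \ref{Tthreshold}---and then verify the asymptotic equality \eqref{Easymptotic} by a direct comparison of the terms of $\mathcal{C}(\bc; z^{N+m}; N, \rho)$ with those of $\mathcal{C}(\bc; z^{N+M}; N, \rho)$.

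For the lower bound, fix $A \in \bp_N(\overline{D}(0,\rho))$ and consider an arbitrary element $A_0 + \sum_{m=1}^{M+1} u_m A_m \in \mathcal{U}[\eta]$ with $|u_m| \leq \eta$. Each Hadamard power $A_m$ is positive semidefinite, so the Loewner bound $u_m A_m \geq -\eta A_m$ holds, reducing the problem to bounding $\sum_m A_m$ above in terms of $A_0$. Applying Corollary \ref{Clmi} to each $A_m = A^{\circ(N-1+m)}$ and reindexing yields
\begin{equation*}
A_0 + \sum_{m=1}^{M+1} u_m A_m \geq \Bigl( 1 - \eta \sum_{m=0}^M \mathcal{C}(\bc; z^{N+m}; N, \rho) \Bigr) A_0,
\end{equation*}
which is positive semidefinite whenever $\eta \leq \bigl(\sum_{m=0}^M \mathcal{C}(\bc; z^{N+m}; N, \rho)\bigr)^{-1}$.

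For the upper bound, I would argue contrapositively: if $\eta > \mathcal{C}(\bc; z^{N+M}; N, \rho)^{-1}$, then the specific element $A_0 - \eta A_{M+1} \in \mathcal{U}[\eta]$ (taking $u_{M+1} = -\eta$ and all other $u_m = 0$) is the entrywise action on $A$ of the polynomial $f(z) = \sum_{j=0}^{N-1} c_j z^j - \eta z^{N+M}$. Theorem \ref{Tthreshold}, applied with $c' = -\eta$, then produces some $A \in \bp_N(\overline{D}(0,\rho))$ for which $f[A]$ fails to be positive semidefinite, forcing $\mathcal{U}[\eta] \not\subset \bp_N(\C)$.

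The main technical step is the asymptotic \eqref{Easymptotic}. Writing the ratio of corresponding $j$th terms of the two sums gives
\begin{equation*}
r_j = \rho^{M-m} \left( \frac{(N+M)!\, m!}{(N+m)!\, M!} \cdot \frac{N+m-j}{N+M-j} \right)^2.
\end{equation*}
The prefactor $(N+M)!/(N+m)!$ is a product of $M-m$ factors each of order $N$, while $(N+m-j)/(N+M-j) = 1 - (M-m)/(N+M-j)$ is bounded below by $(m+1)/(M+1)$ uniformly for $j \in \{0, \ldots, N-1\}$. Hence $r_j \geq c_{m,M,\rho}\,N^{2(M-m)}$ with a constant independent of $j$ and $N$, and summing over $j$ yields $\mathcal{C}(\bc; z^{N+m}; N, \rho) / \mathcal{C}(\bc; z^{N+M}; N, \rho) = O(N^{-2(M-m)}) \to 0$ for every $m < M$. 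Since the sum in \eqref{Easymptotic} has only finitely many terms, it is asymptotic to its $m = M$ summand. The main obstacle is precisely this uniform-in-$j$ lower bound on $r_j$: a naive term-by-term comparison does not immediately suffice because the number of summands $j$ also grows with $N$, but the explicit telescoping of the two binomial ratios makes the uniformity transparent.
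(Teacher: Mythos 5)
Your proposal is correct and follows essentially the same route as the paper's own proof: the first implication is Theorem~\ref{Tanalytic} unwound via its building block Corollary~\ref{Clmi} plus subadditivity, the second is the same contrapositive (set every $u_m = 0$ except $u_{M+1} = -\eta$ and invoke Theorem~\ref{Tthreshold}), and the asymptotic bound uses the identical uniform-in-$j$ estimate, merely written as a lower bound on $a(M,j)/a(m,j)$ rather than an upper bound on its reciprocal. The constant you extract, $c_{m,M,\rho} = \rho^{M-m}\bigl((m+1)!/(M+1)!\bigr)^2$, is exactly the reciprocal of the paper's $b(m,M,\rho)$.
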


\noindent
See the end of Section~\ref{Sanalytic} for the proof of this result.

Finally, understanding which polynomials preserve positivity on
$\bp_N( K )$ can be reformulated as an extremal problem involving
generalized Rayleigh quotients of Hadamard powers.

\begin{theorem}\label{Crayleigh}
Fix $\rho > 0$, integers $M \geq N \geq 1$, and scalars $c_0$, \ldots,
$c_{N - 1} > 0$. Then
\begin{equation*}
\inf_{\bv \in \mathcal{K}( A )^\perp \setminus \{ 0 \}} %
\frac{\bv^* \Bigl( \sum_{j = 0}^{N - 1} c_j A^{\circ j} \Bigr) \bv}%
{\bv^* A^{\circ M} \bv} \geq \mathcal{C}( \bc; z^M; N, \rho )^{-1}
\end{equation*}
for all non-zero $A \in \bp_N( \overline{D}( 0, \rho ) )$, where
\[
\mathcal{K}( A ) := %
\ker( c_0 \one{N} + c_1 A + \cdots + c_{N - 1} A^{\circ ( N - 1 )} ).
\]
The lower bound $\mathcal{C}( \bc; z^M; N, \rho )^{-1}$ is sharp, and
may be obtained by considering only the set of rank-one matrices
$\bp_N^1( ( 0, \rho ) )$.
\end{theorem}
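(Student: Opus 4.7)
The plan is to derive the lower bound as a direct reformulation of Corollary~\ref{Clmi}, and to establish sharpness by invoking the equivalence between parts~(1) and~(3) of Theorem~\ref{Tthreshold} at the boundary of the admissible region of coefficients.

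The first step is immediate. Corollary~\ref{Clmi} provides the Loewner inequality
\[
A^{\circ M} \leq \mathcal{C}(\bc; z^M; N, \rho) \, \Bigl( c_0 \one{N} + c_1 A + \cdots + c_{N-1} A^{\circ (N-1)} \Bigr)
\]
for every $A \in \bp_N(\overline{D}(0, \rho))$. Sandwiching both sides between $\bv^*$ and $\bv$ and rearranging yields the claimed lower bound on the generalized Rayleigh quotient whenever $\bv^* A^{\circ M} \bv > 0$. For $\bv \in \mathcal{K}(A)^\perp \setminus \{0\}$ the numerator is strictly positive, so if the denominator happens to vanish the quotient is $+\infty$ and the bound is trivial.

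For sharpness, I would fix $\varepsilon > 0$ and set $c' := -\bigl(\mathcal{C}(\bc; z^M; N, \rho)^{-1} + \varepsilon\bigr)$. The polynomial $f(z) := \sum_{j=0}^{N-1} c_j z^j + c' z^M$ violates the coefficient condition in part~(2) of Theorem~\ref{Tthreshold}, so the equivalence with part~(3) produces a rank-one matrix $A \in \bp_N^1((0, \rho))$ and a vector $\bv$ with $\bv^* f[A] \bv < 0$. Since $\sum_{j=0}^{N-1} c_j A^{\circ j} \succeq 0$, this strict negativity forces $\bv^* A^{\circ M} \bv > 0$, and rearranging yields a Rayleigh quotient strictly less than $\mathcal{C}(\bc; z^M; N, \rho)^{-1} + \varepsilon$. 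Letting $\varepsilon \to 0^+$ establishes the asserted sharpness, attained on $\bp_N^1((0, \rho))$.

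The one point requiring care is that the witness vector $\bv$ produced by part~(3) need not lie a priori in $\mathcal{K}(A)^\perp$. To remedy this, I would observe that the Loewner inequality above forces
\[
\mathcal{K}(A) \,=\, \ker \Bigl( \sum_{j=0}^{N-1} c_j A^{\circ j} \Bigr) \,\subseteq\, \ker A^{\circ M},
\]
so decomposing $\bv = \bv_\perp + \bv_0$ with $\bv_0 \in \mathcal{K}(A)$ and $\bv_\perp \in \mathcal{K}(A)^\perp$ leaves both quadratic forms $\bv^* A^{\circ M} \bv$ and $\bv^* \bigl( \sum_{j=0}^{N-1} c_j A^{\circ j} \bigr) \bv$ unchanged upon replacement of $\bv$ by $\bv_\perp$; the strict inequality $\bv^* f[A] \bv < 0$ then forces $\bv_\perp \neq 0$. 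This compatibility of the two kernels is the only technical subtlety in the argument; once it is in hand, the proof is essentially a translation of Theorem~\ref{Tthreshold} into the variational language of Rayleigh quotients.
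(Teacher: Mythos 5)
Your proof is correct and takes essentially the same route the paper intends: the lower bound is a restatement of the Loewner inequality in Corollary~\ref{Clmi}, and sharpness comes from running the equivalence (2)$\Leftrightarrow$(3) in Theorem~\ref{Tthreshold} at coefficients just beyond the threshold. The paper itself treats Theorem~\ref{Crayleigh} as a direct reformulation of Theorem~\ref{Tthreshold} and does not spell out a separate proof, so there is nothing to contrast against; what you add that the paper leaves implicit is the check that the witness vector $\bv$ can be replaced by its projection onto $\mathcal{K}(A)^\perp$. That step is handled correctly: the inclusion $\mathcal{K}(A) \subseteq \ker A^{\circ M}$ does indeed follow from the Loewner inequality (since $\bv_0 \in \mathcal{K}(A)$ gives $0 \leq \bv_0^* A^{\circ M} \bv_0 \leq \mathcal{C}\cdot 0 = 0$, whence $A^{\circ M}\bv_0 = 0$), and because both $\sum_j c_j A^{\circ j}$ and $A^{\circ M}$ annihilate $\mathcal{K}(A)$, the cross terms in the quadratic forms vanish and the quotient is unchanged by projecting. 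This is precisely the content of Proposition~\ref{Palgebra} in the paper, obtained there via Lemma~\ref{Lalgebra}; your deduction is an alternative, equally valid route to the same fact. Two minor points worth noting for polish: for $\bv \in \mathcal{K}(A)^\perp\setminus\{0\}$ the numerator is strictly positive because $\sum_j c_j A^{\circ j}$ is positive semidefinite with kernel exactly $\mathcal{K}(A)$, so if the denominator happens to vanish the quotient is genuinely $+\infty$ and does not affect the infimum; and since $c_0 > 0$, for non-zero $A$ (indeed for any $A$) the matrix $\sum_j c_j A^{\circ j}$ is never the zero matrix, so $\mathcal{K}(A)^\perp \neq \{0\}$ and the infimum is over a non-empty set.
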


We note the surprising fact that the left-hand side of
the inequality above is \textit{not} continuous in the variable $A$;
see Remark~\ref{discts}. Motivated by this approach, we obtain, in
Theorem~\ref{Tsimult} below, a description of the kernel
$\mathcal{K}( A )$ for a given matrix $A \in \bp_N( \C )$. As we show,
this kernel coincides with the simultaneous kernels
$\cap_{n \geq 0} \ker A^{\circ n}$ of the Hadamard powers of $A$, and
leads to a
hitherto unexplored stratification
of the space $\bp_N(\C)$.

The rest of this paper is organized as follows. We review background
material in Section~\ref{Sback}. The main result of the paper is
proved in Section~\ref{Smain}, along with many intermediate results on
Schur polynomials that may be interesting in their own right. We also
show in Section~\ref{Smain} how our main result can naturally be
extended to general polynomials, and to analytic functions.
Section~\ref{Srayleigh} deals with the reformulation of our main
theorem as a variational problem, and provides a closed-form
expression for the extreme critical value of a single positive
semidefinite matrix.  Beginning with a novel block-matrix
decomposition of such matrices into rank-one components,
Section~\ref{Skernels} provides a description of the simultaneous
kernels of Hadamard powers of a positive semidefinite matrix. The last
section contains a unified presentation of a dozen known computations
in closed form, of extreme critical values for matrix pencils,
all relevant to our present work.

\subsection*{Acknowledgments}

The authors are grateful to Bala Rajaratnam for sharing his enthusiasm
and ideas that attracted the four of us to these topics. We would like
to thank the American Institute of Mathematics (AIM) for hosting the
workshop ``Positivity, graphical models, and modeling of complex
multivariate dependencies'' in October 2014, where this project was
initiated. We thank Yuan Xu for very stimulating
conversations at the AIM workshop, which were helpful in embarking upon
this project, and Shmuel Friedland, for valuable insights into
generalized Rayleigh quotients.
We are indebted to the referee for a careful examination of
the contents of the manuscript and constructive criticism on its
presentation.

\section{Background and notation}\label{Sback}

Given a subset $K \subset \C$ and integers $1 \leq k \leq N$, let
$\bp_N^k( K )$ denote the set of positive semidefinite $N \times N$
matrices with entries in $K$ and with rank at most $k$, and let
$\bp_N( K ) := \bp_N^N( K )$.
Given a matrix $A$, let $A^{\circ k}$ denote the matrix obtained from
$A$ by taking the $k$th power of each entry; in particular, if $A$ is
an $N \times N$ matrix then $A^{\circ 0} = \one{N}$, the $N \times N$
matrix with each entry equal to~$1$.

Recall that the \emph{Gegenbauer} or \emph{ultraspherical} polynomials
$C^{( \lambda )}_n( x )$ satisfy
\[
( 1 - 2 x t + t^2 )^{-\lambda} = %
\sum_{n = 0}^\infty C^{( \lambda )}_n( x ) t^n \qquad ( \lambda > 0 ),
\]
while for the \emph{Chebyshev polynomials of the first kind}
$C^{( 0 )}_n( x )$ we have
\[
( 1 - x t ) ( 1 - 2 x t + t^2 )^{-1} = %
\sum_{n = 0}^\infty C^{( 0 )}_n( x ) t^n.
\]

We begin by recalling Schoenberg's original statement, which
classifies positive definite functions on a sphere $S^{d - 1}$ of
fixed dimension.

\begin{theorem}[{Schoenberg,
\cite[Theorems~1 and~2]{Schoenberg42}}]\label{Tspheres}
Fix an integer $d \geq 2$ and a continuous function
$f : [ {-1}, 1 ] \to \R$.
\begin{enumerate}
\item The function $f( \cos \cdot )$ is positive definite on the unit
sphere $S^{d  - 1}$ if and only if $f$ can be written as a
non-negative linear combination of the Gegenbauer or Chebyshev
polynomials $C_n^{( \lambda )}$, where $\lambda = ( d - 2 ) / 2$:
\[
f( x ) = %
\sum_{n \geq 0} a_n C_n^{( \lambda )}( x ) \qquad ( a_n \geq 0 ).
\]

\item The entrywise function
$f[ - ] : \bp_N( [ {-1}, 1 ] ) \to \bp_N( \R )$ for all $N \geq 1$ if
and only if $f$ is analytic on $[ {-1}, 1 ]$ and absolutely monotonic
on $[ 0, 1 ]$, i.e., $f$ has a Taylor series with non-negative
coefficients convergent on the closed unit disc
$\overline{D}( 0, 1 )$.
\end{enumerate}
\end{theorem}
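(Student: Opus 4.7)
The plan is to establish part~(1) by classical harmonic analysis on $S^{d - 1}$ and then to deduce part~(2) from it by letting $d \to \infty$; this follows Schoenberg's original strategy.

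For part~(1), I would take as input the addition formula for spherical harmonics: for every $n \geq 0$ there is a positive constant $c_n^{( d )}$ and an orthonormal basis $\{ Y_{n,k} \}_k$ of the space of degree-$n$ spherical harmonics on $S^{d - 1}$ with
\[
\sum_k Y_{n, k}( x )\, \overline{Y_{n, k}( y )} = c_n^{( d )}\, C_n^{( \lambda )}\bigl( \langle x, y \rangle \bigr), \qquad \lambda = \frac{d - 2}{2}.
\]
The left-hand side is manifestly a positive definite kernel, so each $C_n^{( \lambda )}( \cos \cdot )$ is positive definite on $S^{d - 1}$, and the ``if'' direction follows. For the converse I would expand the continuous function $f$ in the orthogonal Gegenbauer basis of $L^2( [ {-1}, 1 ], ( 1 - x^2 )^{\lambda - 1/2}\, \rd x )$ (replaced by Chebyshev polynomials of the first kind when $d = 2$), obtaining coefficients
\[
a_n = \| C_n^{( \lambda )} \|^{-2} \int_{-1}^1 f( x )\, C_n^{( \lambda )}( x ) ( 1 - x^2 )^{\lambda - 1/2}\, \rd x,
\]
and then invoke the Funk--Hecke formula, which identifies $a_n$ (up to a positive factor) with an eigenvalue of the integral operator on $L^2( S^{d - 1} )$ with kernel $f( \langle x, y \rangle )$; positive definiteness of that kernel would then force $a_n \geq 0$.

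For part~(2), the easy direction comes from the Schur product theorem: if $f( z ) = \sum_n a_n z^n$ with $a_n \geq 0$ converges on $\overline{D}( 0, 1 )$, then $f[ A ] = \sum_n a_n A^{\circ n}$ is a convergent sum of positive semidefinite matrices. For the converse, every correlation matrix of size $N$ with entries in $[ {-1}, 1 ]$ is the Gram matrix of unit vectors in some $S^{d - 1}$ with $d \leq N$, so applying the hypothesis to such matrices forces $f( \cos \cdot )$ to be positive definite on every sphere, and part~(1) supplies, for each $d \geq 2$, an expansion
\[
f( x ) = \sum_{n \geq 0} b_n^{( d )} C_n^{( \lambda_d )}( x ), \qquad b_n^{( d )} \geq 0.
\]
I would then set $\beta_n^{( d )} := b_n^{( d )} C_n^{( \lambda_d )}( 1 ) \geq 0$ and use the elementary limit $C_n^{( \lambda )}( x ) / C_n^{( \lambda )}( 1 ) \to x^n$ as $\lambda \to \infty$, verifiable directly from the explicit coefficients of the Gegenbauer polynomial. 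The identity $f( 1 ) = \sum_n \beta_n^{( d )}$ provides a uniform $\ell^1$ bound; a diagonal extraction yields pointwise limits $\beta_n^{( d_k )} \to \alpha_n \geq 0$, and dominated convergence produces $f( x ) = \sum_n \alpha_n x^n$ on $[ {-1}, 1 ]$, which is the desired absolute monotonicity on $[ 0, 1 ]$.

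The main obstacle is this final interchange of the infinite sum with the limit $d \to \infty$: the uniform $\ell^1$ control $\sum_n \beta_n^{( d )} = f( 1 )$ is exactly what prevents mass from escaping to high frequency as $d$ grows, and is the technical crux of the whole argument.
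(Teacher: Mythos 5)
The paper states this theorem as background and cites Schoenberg's original 1942 paper for it; it does not supply its own proof, so there is no in-paper argument to compare yours against. What follows is therefore an assessment of your reconstruction on its own terms.

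Your outline does follow Schoenberg's original strategy. Part~(1) via the addition formula (for sufficiency) and the Gegenbauer/Funk--Hecke expansion (for necessity) is the standard argument and is sound in outline. Part~(2), the ``if'' direction via the Schur product theorem, is fine. The reduction of the ``only if'' direction to part~(1) applied simultaneously for all $d$ is also correct.

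However, there is a genuine gap in the final interchange of limits, and it is not resolved by the $\ell^1$ bound as you claim. After Helly extraction along $d_k \to \infty$ you obtain $\beta_n^{(d_k)} \to \alpha_n \geq 0$ \emph{pointwise} in $n$, with $\sum_n \beta_n^{(d_k)} = f(1)$ for every $k$. But pointwise convergence together with a uniform $\ell^1$ bound only gives, via Fatou, the \emph{inequality} $S := \sum_n \alpha_n \leq f(1)$: mass can escape to high frequency as $d \to \infty$, and Helly's theorem on the non-compact index set $\mathbb{N}$ only yields a sub-probability limit. The ``dominated convergence'' you invoke has no dominating sequence, since the $\beta_n^{(d_k)}$ vary with $k$. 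Tracing through the estimate $\bigl| C_n^{(\lambda)}(x)/C_n^{(\lambda)}(1) \bigr| \leq 1$ shows only
\[
\Bigl| f(x) - \sum_{n \leq N_0} \alpha_n x^n \Bigr| \leq f(1) - \sum_{n \leq N_0} \alpha_n \qquad ( x \in [-1,1], \ N_0 \geq 0 ),
\]
whence $\bigl| f(x) - \sum_n \alpha_n x^n \bigr| \leq f(1) - S$ on $[-1,1]$, which does not force $S = f(1)$. Closing this gap requires finer control of $C_n^{(\lambda)}(x)/C_n^{(\lambda)}(1)$ as $n$ and $\lambda$ grow simultaneously, or an argument that exploits the integral (Funk--Hecke) formula for $\beta_n^{(d)}$ to show that the coefficients stabilize rather than escape; Schoenberg's original paper supplies precisely such an analytic lemma about Gegenbauer polynomials. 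As written, your proposal identifies the crux correctly but does not carry the burden it places on the $\ell^1$ control.
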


For more on absolutely monotonic functions, see the work
\cite{Bernstein} of Bernstein.
Rudin \cite{Rudin59} proved part (2) of the above result
without the continuity assumption, in addition to several other
characterizations of such a function $f$. His work was a part of the
broader context of studying functions acting on Fourier--Stieltjes
transforms in locally compact groups, as explored in joint works with
Kahane, Helson, and Katznelson in \cite{HKKR,Kahane-Rudin}.

The work of Schoenberg has subsequently been extended along
several directions; see, for example,
\cite{BCR-semigroups, Bochner-pd, Bochner-zonal, Christensen_et_al78,
Herz63, Hiai2009, vonNeumann-Schoenberg}.
However, the solution to the original problem in fixed dimension
remains elusive when $N > 2$.

An interesting necessary condition for a continuous function
$f : ( 0, \infty) \to \R$ to preserve positivity in fixed dimension
has been provided by Horn \cite{horn}. This result was recently
extended in \cite{GKR-lowrank} to apply in the case of low-rank
matrices with entries in $( 0, \rho )$ for some $\rho > 0$, and
without the continuity assumption; on the last point, see also Hiai's
work \cite{Hiai2009}.

\begin{theorem}[{Horn \cite{horn}, Guillot--Khare--Rajaratnam
\cite{GKR-lowrank}}]\label{Thorn}
Suppose $f : I \to \R$, where $I := ( 0, \rho )$ and
$0 < \rho \leq \infty$. Fix an integer $N \geq 2$ and suppose that
$f[ A ] \in \bp_N( \R )$ for any $A \in \bp_N^2( I )$ of the form
$A = a \one{N} + \bu \bu^T$, where $a \in ( 0, \rho )$ and
$\bu \in [ 0, \sqrt{\rho - a} )^N$. Then $f \in C^{N - 3}(I)$, with
\[
f^{( k )}( x ) \geq 0 \qquad \forall x \in I, \ 0 \leq k \leq N - 3,
\]
and $f^{(N - 3)}$ is a convex non-decreasing function on~$I$.
If, further, $f \in C^{N - 1}( I )$, then $f^{( k )}( x ) \geq 0$ for
all $x \in I$ and $0 \leq k \leq N - 1$.
\end{theorem}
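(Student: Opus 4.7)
The plan is to follow Loewner's classical test-matrix technique, specialised to the rank-two family in the hypothesis. For any $a \in (0,\rho)$ and $\bu = (u_1, \ldots, u_N)^T$ with distinct entries in $[0, \sqrt{\rho - a})$, the matrix $A = a \one{N} + \bu \bu^T$ lies in the allowed family and has entries $a + u_i u_j \in (0, \rho)$. The algebraic engine is the identity
\[
(\bu \bu^T)^{\circ k} = \bu^{\circ k} (\bu^{\circ k})^T, \qquad \bu^{\circ k} := (u_1^k, \ldots, u_N^k)^T,
\]
together with the fact that $\{\bu^{\circ k}\}_{k=0}^{N-1}$ is a Vandermonde basis of $\R^N$, which allows one to separate the individual contributions of each derivative or divided difference of $f$ at $a$.

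For the easier statement I would assume first that $f \in C^{N-1}$ and Taylor-expand each entry of $f[A]$ around $a$ to get
\[
f[A] = \sum_{k=0}^{N-1} \frac{f^{(k)}(a)}{k!} \bu^{\circ k} (\bu^{\circ k})^T + R(\bu),
\]
where the remainder satisfies $R(\bu)_{ij} = O((u_i u_j)^N)$. Fix $k \in \{0, \ldots, N-1\}$ and pick a vector $\bv_k$ dual to $\bu^{\circ k}$ in the Vandermonde basis, so that $\bv_k^T \bu^{\circ m} = \delta_{km}$ for $0 \leq m \leq N-1$. Rescaling $\bu \mapsto t \bu$ with $t \to 0^+$, the quantity $\bv_k^T f[A] \bv_k$ converges to $f^{(k)}(a)/k!$ times a positive factor while every other term scales like a strictly positive power of $t$. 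Positivity of $f[A]$ then forces $f^{(k)}(a) \geq 0$, and since $a \in (0, \rho)$ is arbitrary, this yields the second assertion of the theorem.

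For the main statement, without the $C^{N-1}$ hypothesis, Taylor expansion is unavailable and my plan is to replace it by Newton divided differences. One selects $\bu$ so that the entries $a + u_i u_j$ range over a suitable $N$-element grid of interpolation nodes in $(0,\rho)$ and then uses a P\'olya--Szeg\H{o}-type determinantal identity to rewrite principal $k \times k$ minors of $f[A]$, up to squared Vandermonde determinants, as products of divided differences of $f$ of orders $0, 1, \ldots, k-1$. Forcing all such minors to be non-negative for $k \leq N-1$ yields sign conditions on divided differences of orders up to $N - 2$; a classical Boas--Widder-type real-variable theorem then converts these conditions into the $C^{N-3}$ regularity, the non-negativity of $f, f', \ldots, f^{(N-3)}$, and the convexity and non-decrease of $f^{(N-3)}$.

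The principal obstacle is that the ``interpolation nodes'' $a + u_i u_j$ are constrained to the algebraic surface $\{a + u v : u, v \in \R\}$, so one does not have full freedom in choosing divided-difference nodes as in the general rank-unrestricted Horn setting; adapting the P\'olya--Szeg\H{o} identity and the Boas--Widder regularity step to this restricted two-parameter family is the delicate point. A secondary subtlety, needed for the Guillot--Khare--Rajaratnam extension, is the removal of the a priori continuity hypothesis on $f$: this is handled by first using non-negativity of $2 \times 2$ minors of $f[A]$ to show that $f$ is monotonic, and hence Borel-measurable, on small subintervals, which is enough regularity to launch the divided-difference analysis and eventually bootstrap to the stated $C^{N-3}$ conclusion.
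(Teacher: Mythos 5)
The paper does not prove Theorem~\ref{Thorn}: it is quoted as a background result, attributed to Horn~\cite{horn} and to Guillot--Khare--Rajaratnam~\cite{GKR-lowrank}, so there is no ``paper's own proof'' against which to check you line by line. Judged against the literature, your outline captures the correct strategy: the substitution $g(t) := f(a+t)$ reduces the structured matrices $A = a\one{N} + \bu\bu^T$ to matrices of the form $(g(u_iu_j))$; the $C^{N-1}$ case is handled by a Taylor expansion at $a$ combined with the Vandermonde dual vectors $\bv_k$; the general case requires passing to divided differences and invoking a Boas--Widder-type regularity theorem; and removing the a~priori continuity hypothesis (the Guillot--Khare--Rajaratnam contribution) begins with $1\times1$ and $2\times 2$ minors forcing $f\geq 0$ and $f$ non-decreasing, hence measurable.

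Two points in your sketch deserve tightening. First, the Taylor remainder for a $C^{N-1}$ function is only $o(h^{N-1})$ (Peano form), not $O(h^N)$ as you wrote; this does not break the argument, because after substituting $\bu \mapsto t\bu$ and dividing by $t^{2k}$ with $k \leq N-1$ the error term $o(t^{2(N-1)})$ still vanishes in the limit, but the stated bound is not available without assuming $f \in C^N$. Second, the claimed ``P\'olya--Szeg\H{o}-type determinantal identity expressing $k\times k$ minors of $f[A]$, up to squared Vandermonde factors, as products of divided differences of orders $0,\ldots,k-1$'' is not a literal identity for a general $f$; it is only the leading-order behaviour. What is true, and what is essentially Proposition~\ref{Pcauchy-binet} of the present paper specialised to $\bv = \bu$, is that for a polynomial $g(t) = \sum_k c_k t^k$ one has $\det g[\bu\bu^T] = \Delta_N(\bu)^2\sum_{\bn'} s_{\lambda(\bn')}(\bu)^2\prod_k c_{n'_k}$; for a $C^{N-1}$ function $g$ the top $N\times N$ minor is therefore $\Delta_N(\bu)^2\prod_{k=0}^{N-1} g^{(k)}(0)/k! + o(\cdot)$, and in the non-smooth setting the correct objects are divided differences extracted by compressing the quadratic form along the Vandermonde dual basis, not a closed-form factorisation of the minor itself. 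Being explicit about which compressed quadratic forms yield which divided-difference inequalities, and which precise Boas--Widder statement is then applied, is where the actual work lies; your outline correctly names the ingredients but leaves that core step at the level of intent.
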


Note that all real power functions preserve positivity on
$\bp_N^1( ( 0, \rho ) )$, yet such functions need not have even a
single positive derivative on $( 0, \rho )$. However, as shown in
Theorem~\ref{Thorn}, working with a small one-parameter extension of
$\bp_N^1( ( 0, \rho ) )$ guarantees that $f^{( k )}$ is non-negative
on $( 0, \rho )$ for $0 \leq k \leq N - 3$.

\begin{remark}\label{Rsharp}
Theorem~\ref{Thorn} is sharp in the sense that there exist functions
$f: ( 0, \rho ) \to \R$ which preserve positivity on
$\bp_N( ( 0, \rho ) )$, but not on $\bp_{N + 1}( ( 0, \rho ) )$. For
example, $f( x ) = x^\alpha$, where $\alpha \in ( N - 2, N - 1 )$ is
an example of such a function; see \cite{FitzHorn, GKR-crit-2sided,
Hiai2009} for more details. Thus the bound on the number of
non-negative derivatives in Theorem~\ref{Thorn} is sharp.
\end{remark}

In light of Remark~\ref{Rsharp}, we focus henceforth on analytic
functions which preserve $\bp_N( K )$ for fixed $N$ when applied
entrywise. Note that any analytic function on $D( 0, \rho )$ that maps
$( 0, \rho )$ to $\R$ necessarily has real Taylor coefficients.

Recall by Theorem~\ref{Thorn} that if
$f[ - ] : \bp_N^2( ( 0, \infty ) ) \to \bp_N( \R )$ and
$f \in C^{(N - 1)}( ( 0, \infty ) )$ then $f^{( k )}$ is non-negative
on $( 0, \infty )$ for $0 \leq k \leq N - 1$. The next lemma shows
that if $f$ is assumed to be analytic, then it suffices to work with
$\bp_N^1$ instead of $\bp_N^2$ in order to arrive at the same
conclusion.

\begin{lemma}\label{Lpos-coeff}
Let $f : D( 0, \rho ) \to \R$ be an analytic function, where
$0 < \rho \leq \infty$, so that $f( x ) = \sum_{n \geq 0} c_n x^n$ on
$D( 0, \rho )$. If $f[ - ] : \bp_N^1( ( 0, \rho ) ) \to \bp_N( \R )$
for some integer $N \geq 1$, then the first $N$ non-zero Taylor
coefficients $c_j$ are strictly positive.
\end{lemma}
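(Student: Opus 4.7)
The plan is to test the hypothesis on a one-parameter family of rank-one matrices and then read off the leading Taylor coefficient. Fix $\bw \in \R^N$ with distinct positive entries, and for $\vi > 0$ small enough that $\vi^2 \max_i w_i^2 < \rho$, set $A( \vi ) := \vi^2 \bw \bw^T \in \bp_N^1( ( 0, \rho ) )$. Because entrywise powers of a rank-one matrix factor, I obtain the decomposition
\[
f[ A( \vi ) ] = \sum_{n \geq 0} c_n \vi^{2 n} \bw^{\circ n} ( \bw^{\circ n} )^T ,
\]
so for any $\bv \in \R^N$ the function
\[
g( \vi ) := \bv^T f[ A( \vi ) ] \bv = \sum_{n \geq 0} c_n ( \bv^T \bw^{\circ n} )^2 \vi^{2 n}
\]
is real-analytic in $\vi$ in a neighbourhood of the origin (the interchange of sum and quadratic form being justified by absolute convergence). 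By hypothesis $g( \vi ) \geq 0$ for all sufficiently small $\vi > 0$, and so the lowest-order non-vanishing Taylor coefficient of $g$ must be non-negative.

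Next I will tune $\bv$ so that this leading coefficient isolates a prescribed $c_{n_k}$. Enumerate the indices of the non-zero Taylor coefficients of $f$ as $n_1 < n_2 < \cdots$, and fix $k$ with $1 \leq k \leq N$. The key linear-algebra input is that $\bw^{\circ n_1}, \ldots, \bw^{\circ n_k}$ are linearly independent in $\R^N$: any $k \times k$ minor of the matrix $( w_i^{n_j} )$ is a generalized Vandermonde determinant, non-vanishing because a polynomial $\sum_j a_j x^{n_j}$ with $k$ non-zero terms has at most $k - 1$ positive real roots by Descartes' rule of signs. I can therefore choose $\bv \in \R^N$ orthogonal to $\bw^{\circ n_j}$ for each $j < k$ while $\bv^T \bw^{\circ n_k} \neq 0$. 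With this choice all terms of $g$ of order less than $2 n_k$ in $\vi$ vanish, and the Taylor series begins with
\[
c_{n_k} ( \bv^T \bw^{\circ n_k} )^2 \vi^{2 n_k} ,
\]
whose coefficient is non-zero (both factors are). It must be non-negative by the previous paragraph, which forces $c_{n_k} > 0$.

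The only substantive obstacle is the generalized-Vandermonde linear-independence step, which is standard but essential; everything else is a leading-order comparison. A minor bookkeeping point to address: if $f$ happens to have fewer than $N$ non-zero Taylor coefficients altogether, the statement concerns only those that do exist, and the same argument applies to each admissible $k$ without modification. Observe finally that the proof uses only matrices of the form $\vi^2 \bw \bw^T$, so it genuinely exploits only the weaker hypothesis that $f[ - ]$ preserves positivity on the rank-one cone $\bp_N^1( ( 0, \rho ) )$.
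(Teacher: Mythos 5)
Your proposal is correct and follows essentially the same route as the paper's proof: both test on rank-one matrices $\epsilon \bw \bw^T$, invoke non-vanishing of generalized Vandermonde determinants to get linear independence of the Hadamard powers $\bw^{\circ n_1}, \ldots, \bw^{\circ n_k}$, pick a test vector annihilating the lower-order terms, and pass to the scaling limit to isolate $c_{n_k}$. The only cosmetic difference is that the paper constructs a full dual basis $\bv_1, \ldots, \bv_m$ while you require only orthogonality to the strictly lower powers (letting the higher-order terms be suppressed by $\vi \to 0$); both close the argument in the same way.
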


In particular, if $f$ is a sum of at most $N$ monomials, then under the
hypotheses of Lemma~\ref{Lpos-coeff}, all coefficients of $f$ are
non-negative.

\begin{proof}
Suppose that the first $m$ non-zero coefficients are $c_{n_1}$,
\ldots, $c_{n_m}$, where $m \leq N$. Fix
$\bu := ( u_1, \ldots, u_m )^T \in ( 0, \sqrt{\rho} )^m$ with distinct
entries $u_1$, \ldots, $u_m$, and note the matrix
$( u_j^{n_k} )_{j, k = 1}^m$ is non-singular
\cite[Chapter~XIII, \S8, Example~1]{Gantmacher_Vol2}. Hence
$\{ \bu^{\circ n_1}, \ldots, \bu^{\circ n_m} \}$ is linearly
independent and there exist vectors $\bv_1$, \ldots, $\bv_m \in \R^m$
such that $( \bu^{\circ n_j} )^T \bv_k = \delta_{j, k}$ for all
$1 \leq j$, $k \leq m$. Since $f$ preserves positivity on
$\bp_N^1( ( 0, \rho ) )$, it follows that
\[
0 \leq \epsilon^{-n_k} \bv_k^T f[ \epsilon \bu \bu^T ] \bv_k = %
c_{n_k} + \sum_{j > n_m} c_j ( \bu^{\circ j} \bv_k )^2 %
\epsilon^{j  - n_k} \to c_{n_k}
\]
as $\epsilon \to 0^+$, for $1 \leq k \leq m$. The result follows.
\end{proof}

The above discussion naturally raises various questions.

\begin{enumerate}
\item Can one find necessary and sufficient conditions on a restricted
class of functions, such as polynomials, to ensure that positivity is
preserved in fixed dimension?

\item Note that the power functions in Remark~\ref{Rsharp},
$f( x ) = x^\alpha$ for $\alpha \in ( N - 2, N - 1 )$, are not
analytic. Does there exist a function $f$ analytic on an open subset
$U \subset \C$ which preserves positivity on $\bp_N( U )$, but not on
$\bp_{N + 1}( U )$?
\end{enumerate}

\noindent
Our main result provides positive answers to both of these questions.

\section{Schur polynomials and Hadamard powers}\label{Smain}

The goal of this section is to prove Theorem~\ref{Tthreshold}. The
proof relies on a careful analysis of the polynomial
\[
p( t ) := \det( t ( c_0 \one{N} + c_1 A + \cdots + %
c_{N - 1} A^{\circ ( N - 1 )} ) - A^{\circ M} ),
\]
where $A \in \bp_N^1( \overline{D}( 0, \rho) )$. More specifically, we
study algebraic properties of the polynomial $p( t )$, and show how an
explicit factorization can be obtained by exploiting the theory of
symmetric polynomials.

\subsection{Determinantal identities for Hadamard powers}

We begin with some technical preliminaries involving Schur polynomials.

As the results in this subsection may be of independent interest to
specialists in symmetric functions and algebraic combinatorics, we
state them over an arbitrary field $\F$.

Given a partition, i.e., a non-increasing $N$-tuple of
non-negative integers $\bn = ( n_N \geq \cdots \geq n_1 )$, the
corresponding \emph{Schur polynomial} $s_\bn( x_1, \ldots, x_N )$ over a
field $\F$ with at least $N$ elements is defined to be the unique
polynomial extension to $\F^N$ of
\begin{equation}\label{schurdef}
s_\bn( x_1, \ldots, x_N ) := %
\frac{\det ( x_i^{n_j + N - j} )}{\det ( x_i^{N - j} )}
\end{equation}
for pairwise distinct $x_i \in \F$. Note that the denominator
is precisely the Vandermonde determinant
$\Delta_N( x_1, \ldots, x_N ) := \det ( x_i^{N - j} ) = %
\prod_{1 \leq i < j \leq N} ( x_i - x_j )$; it follows from this that
\begin{equation}\label{schureval}
s_\bn( 1, \ldots, z^{N - 1} ) = %
\prod_{1 \leq i < j \leq N} %
\frac{z^{n_j + j} - z^{n_i + i}}{z^j - z^i}, \quad %
s_\bn( 1, \ldots, 1 ) = \prod_{1 \leq i < j \leq N} %
\frac{n_j - n_i + j - i}{j - i}.
\end{equation}
The last equation can also be deduced from Weyl Character Formula
in type~$A$; see, for example,
\cite[Chapter~I.3, Example~1]{Macdonald}. For more details about Schur
polynomials and the theory of symmetric functions, see
\cite{Macdonald}. In particular, note that Schur polynomials have
non-negative integer coefficients, by
\cite[Chapter~I, Equation~(5.12)]{Macdonald}.

\begin{proposition}\label{Pcauchy-binet}
Let $A := \bu \bv^T$, where $\bu = ( u_1, \ldots, u_N )^T$ and
$\bv := ( v_1, \ldots, v_N )^T \in \F^N$ for $N \geq 1$.  Given
$m$-tuples of non-negative integers
$\bn = ( n_m > n_{m-1} > \cdots > n_1 )$
and scalars $( c_{n_1}, \ldots, c_{n_m} ) \in \F^m$,
the following determinantal identity holds:
\begin{equation}\label{Ecauchy-binet1}
\det \sum_{j = 1}^m c_{n_j} A^{\circ n_j} = %
\Delta_N( \bu ) \Delta_N( \bv ) \sum_{\bn' \subset \bn, \ | \bn'| = N}
s_{\lambda( \bn' )}( \bu ) s_{\lambda( \bn' )}( \bv ) %
\prod_{k = 1}^N c_{n'_k}.
\end{equation}
Here,
$\lambda( \bn') := %
( n'_N - N + 1 \geq n'_{N-1} - N + 2 \geq \cdots \geq n'_1 )$
is obtained by subtracting the staircase partition
$( N - 1, \ldots, 0 )$ from $\bn' := ( n'_N > \cdots > n'_1 )$,
and the sum is over all subsets $\bn'$ of cardinality~$N$. In
particular, if $m < N$ then the determinant is zero.
\end{proposition}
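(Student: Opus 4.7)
The starting point is the rank-one structure of $A$: since $A = \bu \bv^T$, each Hadamard power is also rank-one, namely $A^{\circ n_j} = \bu^{\circ n_j} (\bv^{\circ n_j})^T$. Consequently, the linear combination factors as
\[
\sum_{j = 1}^m c_{n_j} A^{\circ n_j} = U\, C\, V^T,
\]
where $U$ and $V$ are the $N \times m$ matrices with $j$th columns $\bu^{\circ n_j}$ and $\bv^{\circ n_j}$ respectively (i.e., $U_{ij} = u_i^{n_j}$ and $V_{ij} = v_i^{n_j}$), and $C = \diag( c_{n_1}, \ldots, c_{n_m} )$. When $m < N$, this matrix has rank at most $m < N$, so its determinant vanishes; this settles the degenerate case at once.

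For the general case, I would apply the Cauchy--Binet formula to the product of the $N \times m$ matrix $U$ with the $m \times N$ matrix $C V^T$:
\[
\det( U C V^T ) = \sum_{\substack{S \subset [m] \\ |S| = N}} \det( U_S )\, \det\bigl( ( C V^T )^S \bigr),
\]
where $U_S$ denotes the $N \times N$ submatrix of $U$ obtained by keeping the columns indexed by $S$, and $( C V^T )^S$ denotes the $N \times N$ submatrix of $C V^T$ obtained by keeping the rows indexed by $S$. Each such $S$ corresponds bijectively to a strictly increasing subsequence $\bn' = ( n'_1 < \cdots < n'_N )$ of $\bn$, which the proposition prefers to display as $( n'_N > \cdots > n'_1 )$. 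Pulling the diagonal factors $c_{n'_k}$ out of the rows of $( C V^T )^S$ yields the factor $\prod_{k = 1}^N c_{n'_k}$ together with a generalized Vandermonde determinant in the $v_i$'s.

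The remaining step is to recognise each $N \times N$ minor as a product of a Schur polynomial with the Vandermonde determinant. Concretely, the identity
\[
\det ( x_i^{n'_k} )_{i, k = 1}^N = s_{\lambda( \bn' )}( x_1, \ldots, x_N )\, \Delta_N( x_1, \ldots, x_N )
\]
is precisely the Schur polynomial formula~\eqref{schurdef} applied to the partition $\lambda( \bn' )$ obtained by subtracting the staircase $( N - 1, \ldots, 0 )$ from $\bn'$; this is the standard passage between strict and weak decreasing sequences, and it is valid over any field with at least $N$ elements (the denominator $\Delta_N$ is then a non-zero-divisor in the polynomial ring, so the identity extends polynomially). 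Substituting this Schur factorisation for both $\bu$ and $\bv$ into the Cauchy--Binet expansion and collecting the scalar product $\prod_k c_{n'_k}$ yields~\eqref{Ecauchy-binet1}.

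The only mild obstacle is bookkeeping: one must verify that the same partition $\lambda( \bn' )$ governs both the $\bu$-minor and the $\bv$-minor (this is clear because the exponents in the two minors are identical, coming from the common index set $S$), and that no stray signs appear when reordering rows or columns to convert $\det( V_S )$ into the row-indexed determinant of $( C V^T )^S$. Beyond this, the proof is a direct combination of Cauchy--Binet with the defining formula for Schur polynomials.
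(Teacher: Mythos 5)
Your proof is correct and takes essentially the same route as the paper: both factor $\sum_j c_{n_j} A^{\circ n_j}$ as a product of an $N \times m$ and an $m \times N$ matrix, apply Cauchy--Binet, and identify each $N \times N$ minor with a generalized Vandermonde determinant, i.e.\ $s_{\lambda(\bn')} \cdot \Delta_N$. The one small divergence is that you keep the diagonal coefficient matrix $C$ in the middle as $U C V^T$, whereas the paper absorbs $\sqrt{C}$ into both factors via $X(\bu,\bn,\bc) := (\sqrt{c_{n_k}} u_j^{n_k})$ and must therefore momentarily work over an algebraic closure of $\F$ to make sense of the square roots; your variant avoids this detour, which is a mild tidying but not a different argument.
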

\begin{proof}
If there are $m < N$ summands then the matrix in question has rank at
most $m < N$, so it is singular; henceforth we suppose $m \geq N$.
Note first that if $\bc := ( c_{n_1}, \ldots, c_{n_m} )$ and
\[
X( \bu, \bn, \bc ) := %
( \sqrt{c_{n_k}} u_j^{n_k} )_{1 \leq j \leq N, 1 \leq k \leq m}
\]
where we work over an algebraic closure of $\F$, then
\begin{equation}\label{Ecauchy-binet2}
\sum_{j = 1}^m c_{n_j} A^{\circ n_j} = %
X( \bu, \bn, \bc ) X( \bv, \bn, \bc )^T.
\end{equation}
Next, let $\bc|_{\bn'} := ( c_{n'_1}, \ldots, c_{n'_N} )$ and note
that, by the Cauchy--Binet formula applied to~\eqref{Ecauchy-binet2},
\begin{align*} 
\det \sum_{j = 1}^m c_{n_j} A^{\circ n_j} & = %
\sum_{\bn' \subset \bn, \ | \bn' | = N} %
\det \bigl( X( \bu, \bn', \bc|_{\bn'} ) %
X( \bv, \bn', \bc|_{\bn'} )^T \bigr) \\
 & = \sum_{\bn' \subset \bn, \ | \bn' | = N} %
\det X( \bu, \bn', \bc|_{\bn'} ) \det X( \bv, \bn', \bc|_{\bn'} ) \\
 & = \sum_{\bn' \subset \bn, \ | \bn' | = N} %
\det ( u_j^{n'_k} ) \det ( v_j^{n'_k} ) \prod_{k = 1}^N c_{n'_k}.
\end{align*}
Each of the last two determinants is precisely the product of the
appropriate Vandermonde determinant times the Schur polynomial
corresponding to $\lambda( \bn' )$; see Equation~\eqref{schurdef}.
This observation completes the proof.
\end{proof}

We now use Proposition~\ref{Pcauchy-binet} to obtain an explicit
factorization of the determinant of $p[ \bu \bv^T ]$ for a large class
of polynomials~$p$.

\begin{theorem}\label{Pjacobi-trudi}
Let $R \geq 0$ and $M \geq N \geq 1$ be integers, let $c_0$, \ldots,
$c_{N - 1} \in \F^\times$ be non-zero scalars, and let the polynomial
\[
p_t( x ) := t ( c_0 x^R + \cdots + c_{N - 1} x^{R + N - 1} ) - %
x^{R + M},
\]
where $t$ is a variable. Let the hook partition
$\mu( M, N, j ) := ( M - N + 1, 1, \ldots, 1, 0, \ldots, 0 )$, with
$N - j - 1$ entries after the first equal to $1$ and the remaining $j$
entries equal to $0$. The following identity holds for all
$\bu = ( u_1, \ldots, u_N )$ and
$\bv := ( v_1, \ldots, v_N ) \in \F^N$:
\begin{equation}\label{Ejacobi-trudi}
\det p_t[ \bu \bv^T ] = t^{N - 1} \Delta_N( \bu ) %
\Delta_N( \bv ) \prod_{j = 1}^N c_{j - 1} u_j^R v_j^R %
\Bigl( t - \sum_{j = 0}^{N - 1} %
\frac{s_{\mu( M, N, j )}( \bu ) %
s_{\mu( M, N, j )}( \bv )}{c_j} \Bigr).
\end{equation}
\end{theorem}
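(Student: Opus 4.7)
My plan is to view $p_t[\bu\bv^T]$ as a sum of $N+1$ rank-one Hadamard powers of $A := \bu\bv^T$, then apply Proposition~\ref{Pcauchy-binet} directly and read off the determinant from the resulting sum over size-$N$ subsets. Expanding gives $p_t[\bu\bv^T] = \sum_{k=0}^{N-1} tc_k A^{\circ(R+k)} - A^{\circ(R+M)}$, which fits the setup of Proposition~\ref{Pcauchy-binet} with $m = N+1$, exponent tuple $\bn = (R+M, R+N-1, \ldots, R+1, R)$ in decreasing order, and scalars $-1, tc_{N-1}, \ldots, tc_0$. Since $m = N+1$, there are exactly $\binom{N+1}{N} = N+1$ subsets $\bn' \subset \bn$ of size $N$: one ``base'' subset $\bn'_\star = \{R, R+1, \ldots, R+N-1\}$ obtained by dropping $R+M$, and, for each $j \in \{0, \ldots, N-1\}$, a ``hook'' subset $\bn'_j = \bn'_\star \cup \{R+M\} \setminus \{R+j\}$.

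Next I would compute the partition $\lambda(\bn')$ attached to each of these $N+1$ subsets. A direct application of the formula in Proposition~\ref{Pcauchy-binet} gives $\lambda(\bn'_\star) = (R, R, \ldots, R)$, while
\[
\lambda(\bn'_j) = \bigl( R + M - N + 1,\ \underbrace{R+1, \ldots, R+1}_{N-j-1},\ \underbrace{R, \ldots, R}_{j} \bigr) = (R, R, \ldots, R) + \mu(M, N, j).
\]
This identification of the hook shape is the key combinatorial step. Using the fact that shifting every part of a partition by $R$ multiplies the corresponding Schur polynomial by $(x_1 \cdots x_N)^R$ (immediate from the Jacobi bialternant formula~\eqref{schurdef}, since the numerator acquires a factor $(x_1\cdots x_N)^R$ and the denominator is unchanged), I obtain $s_{\lambda(\bn'_\star)}(\bu) = (u_1 \cdots u_N)^R$ and $s_{\lambda(\bn'_j)}(\bu) = (u_1 \cdots u_N)^R\, s_{\mu(M,N,j)}(\bu)$, and analogously for $\bv$.

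To assemble the formula, I collect the contributions from Proposition~\ref{Pcauchy-binet}. The base subset contributes $t^N \prod_{k=0}^{N-1} c_k \cdot (u_1 \cdots u_N)^R (v_1 \cdots v_N)^R$, while each hook subset $\bn'_j$ contributes $-t^{N-1} \bigl(\prod_{k \neq j} c_k\bigr) \cdot (u_1 \cdots u_N)^R (v_1 \cdots v_N)^R \cdot s_{\mu(M,N,j)}(\bu)\, s_{\mu(M,N,j)}(\bv)$. Pulling out the common factor $t^{N-1} \prod_{k=0}^{N-1} c_k \cdot (u_1 \cdots u_N)^R (v_1 \cdots v_N)^R$, using $\prod_{k \neq j} c_k / \prod_{k} c_k = 1/c_j$, and including the Vandermonde prefactor $\Delta_N(\bu)\Delta_N(\bv)$ from Proposition~\ref{Pcauchy-binet}, gives exactly~\eqref{Ejacobi-trudi}. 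The main obstacle is correctly reconciling the two indexing conventions (the paper's decreasing enumeration $n'_N > \cdots > n'_1$ versus the standard decreasing order of partition parts) when unwinding the subtraction of the staircase and identifying the hook shape; once that bookkeeping is done the rest is algebraic manipulation, and since both sides of~\eqref{Ejacobi-trudi} are polynomial in the entries of $\bu$ and $\bv$, the identity extends from the generic case used to invoke~\eqref{schurdef} to all $\bu, \bv \in \F^N$.
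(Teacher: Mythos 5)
Your proof is correct, and it takes a genuinely different route from the paper's. You apply Proposition~\ref{Pcauchy-binet} \emph{once}, directly to the full $(N+1)$-term decomposition $p_t[\bu\bv^T] = \sum_{k=0}^{N-1} tc_k A^{\circ(R+k)} - A^{\circ(R+M)}$, treating $t$ as a formal variable (equivalently, working over the field $\F(t)$ so that the scalars $tc_k$ and $-1$ live in a field where the proposition applies). You then enumerate the $N+1$ size-$N$ subsets, identify the associated partitions $\lambda(\bn')$ as $(R,\ldots,R)$ for the base subset and $(R,\ldots,R)+\mu(M,N,j)$ for each hook subset, and factor out $(u_1\cdots u_N)^R(v_1\cdots v_N)^R$ via the translation property of Schur functions. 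The paper instead reduces to $R=0$ up front via $\det(A^{\circ R}\circ B) = \prod_j u_j^R v_j^R\,\det B$, then writes $p_t[A] = tB - A^{\circ M}$ and expands $\det(tB - A^{\circ M})$ via the Laplace formula~\eqref{Edet_sum}, using the rank-one observation to kill all terms except $t^N$ and $t^{N-1}$. The paper extracts the $t^N$-coefficient and the $t^{N-1}$-coefficient separately, each via its own application of Proposition~\ref{Pcauchy-binet} (the latter cleverly by evaluating at $t=1$). Your approach is more economical: one application of Cauchy--Binet, no rank-one observation, no Laplace expansion, and the $R$-factor falls out automatically from the shifted Schur polynomials. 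The paper's approach isolates the $t$-structure explicitly and keeps all scalars in $\F$, which avoids the need to pass to $\F(t)$; your approach is slicker once one is comfortable with that extension. Both are valid. One small suggestion: make explicit that you invoke Proposition~\ref{Pcauchy-binet} over $\F(t)$, so the scalar $tc_k$ is a legitimate field element; the equality of polynomials in $t$ then follows immediately.
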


\begin{proof}
Let $A = \bu \bv^T$ and note first that 
$\det ( A^{\circ R} \circ B ) = \prod_{j = 1}^N u_j^R v_j^R \cdot \det B$
for any $N \times N$ matrix $B$, so it suffices to prove the result when
$R = 0$, which we assume from now on.

Recall the Laplace formula: if $B$ and $C$ are $N \times N$ matrices,
then
\begin{equation}\label{Edet_sum}
\det( B + C ) = \sum_{\bn \subset \{ 1, \ldots, N \}} \det M_\bn( B; C ),
\end{equation}
where $M_\bn( B; C )$ is the matrix formed by replacing the rows of
$B$ labelled by elements of $\bn$ with the corresponding rows of $C$.
In particular, if $B = \sum_{j = 0}^{N - 1} c_j A^{\circ j}$ then
\begin{equation}\label{heqn}
\det p_t[ A ] = \det( t B - A^{\circ M} ) = t^N \det B - %
t^{N - 1} \sum_{j = 1}^N \det M_{\{ j \}}( B; A^{\circ M} ),
\end{equation}
since the determinants in the remaining terms contain two rows of the
rank-one matrix $A^{\circ M}$. By Proposition~\ref{Pcauchy-binet}
applied with $n_j = j - 1$, we obtain
\begin{align*}
\det B = \Delta_N( \bu ) \Delta_N( \bv ) c_0 \cdots c_{N - 1}.
\end{align*}
To compute the coefficient of $t^{N-1}$, note that taking $t = 1$ in
Equation~\eqref{heqn} gives that
\[
\sum_{j = 1}^N \det M_{\{ j \}}( B; A^{\circ M} ) = %
\det B - \det p_1[ A ].
\]
Moreover, $\det p_1[ A ]$ can be computed using
Proposition~\ref{Pcauchy-binet} with $m = N + 1$ and
$c_{n_{N + 1}} = {-1}$:
\begin{equation*}
\det p_1[ A ] = \det B - \Delta_N( \bu ) \Delta_N( \bv ) %
c_0 \cdots c_{N - 1} \sum_{j = 0}^{N - 1} %
\frac{s_{\mu( M, N, j )}( \bu ) s_{\mu( M, N, j)}( \bv )}{c_j},
\end{equation*}
since
$\mu( M, N, j ) = %
\lambda\bigl( ( M, N - 1, N - 2, \ldots, j + 1, %
\widehat{j}, j - 1, \ldots, 0 ) \bigr)$ for $0 \leq j \leq N - 1$. The
identity~\eqref{Ejacobi-trudi} now follows.
\end{proof}

\begin{remark}
The connection between Theorem~\ref{Pjacobi-trudi} and the constant
$\mathcal{C}( \bc; z^M; N, \rho )$ in Theorem~\ref{Tthreshold} stems
from the fact that
\begin{equation}\label{Eschur1}
s_{\mu( M, N, j )}( 1, \ldots, 1 ) = %
\binom{M}{j} \binom{M - j - 1}{N - j - 1}
\end{equation}
for $0 \leq j \leq N - 1$. 
This is a straightforward application of Equation~\eqref{schureval};
alternatively, it follows by applying Stanley's hook-content formula
\cite[Theorem 15.3]{Stanley} to the hook Schur function
$\mu( M, N, j )$. We also mention a third proof using the dual
Jacobi--Trudi (Von N{\"a}gelsbach--Kostka) identity
\cite[Chapter~I, Equation~(3.5)]{Macdonald}, which rewrites Schur
polynomials in terms of elementary symmetric polynomials. The proof
goes as follows: note that the dual partition
of $\mu( M, N, j )$ is, up to attaching zeros at the end,
the $(M + 1)$-tuple $\mu'( M, N, j ) := ( N - j, 1, \ldots, 1 )$.
Therefore $s_{\mu( M, N, j )}( 1, \ldots, 1 )$ equals the determinant
of a $2 \times 2$ block triangular matrix, one of whose block diagonal
submatrices is unipotent, and the determinant of the other is computed
inductively to yield~\eqref{Eschur1}.
\end{remark}

\begin{corollary}
In the setting of Theorem~\ref{Pjacobi-trudi}, if $M = N$, then 
\begin{equation}
\det p_t[ A ] = t^{N - 1} \Delta_N( \bu ) \Delta_N( \bv ) %
\prod_{j = 1}^N c_{j - 1} u_j^R v_j^R \Bigl( t - %
\sum_{j = 0}^{N - 1} \frac{e_{N - j}( \bu ) e_{N - j}( \bv )}{c_j} %
\Bigr),
\end{equation}
where
$e_j( \bu ) := %
\sum_{1 \leq n_1 < \cdots < n_j \leq N} u_{n_1} \ldots u_{n_j}$
is the elementary symmetric polynomial in~$N$ variables of degree~$j$.
\end{corollary}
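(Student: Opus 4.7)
The plan is simply to specialize Theorem~\ref{Pjacobi-trudi} to the case $M = N$ and then recognize the resulting Schur polynomials as elementary symmetric polynomials.

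First I would substitute $M = N$ into the hook partition $\mu(M, N, j) = (M - N + 1, 1, \ldots, 1, 0, \ldots, 0)$ (with $N - j - 1$ ones after the first entry and $j$ trailing zeros). When $M = N$, the first entry collapses from $M - N + 1$ to $1$, so the hook partition degenerates to $\mu(N, N, j) = (1, 1, \ldots, 1, 0, \ldots, 0)$ with a total of $(N - j - 1) + 1 = N - j$ ones followed by $j$ zeros. In other words, $\mu(N, N, j)$ is the column partition $(1^{N-j})$.

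Next I would invoke the classical identity
\[
s_{(1^k)}(x_1, \ldots, x_N) = e_k(x_1, \ldots, x_N) \qquad (0 \leq k \leq N),
\]
which expresses the Schur polynomial corresponding to a column partition as the elementary symmetric polynomial of the same degree. This is immediate from the dual Jacobi--Trudi (von N\"agelsbach--Kostka) identity already cited in the preceding remark, or equivalently from the fact that the semistandard Young tableaux of shape $(1^k)$ on $\{1, \ldots, N\}$ are in bijection with the $k$-element subsets of $\{1, \ldots, N\}$. Applying this with $k = N - j$ yields $s_{\mu(N, N, j)}(\bu) = e_{N-j}(\bu)$ and similarly for $\bv$.

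Finally, substituting these identifications into the right-hand side of Equation~\eqref{Ejacobi-trudi} gives the desired formula. There is no real obstacle here; the corollary is a direct specialization, and the only content is recognizing the hook $\mu(N, N, j)$ as a column partition so that the Schur polynomial collapses to an elementary symmetric polynomial.
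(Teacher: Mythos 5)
Your proof is correct and takes essentially the same route as the paper: specialize $M = N$ so that the hook $\mu(N,N,j)$ collapses to the column partition $(1^{N-j})$, then invoke the standard identity $s_{(1^k)} = e_k$. The paper cites the same identity from Macdonald and offers no further detail, so your argument matches the intended one.
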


\begin{proof}
This follows immediately from the observation that
$s_{\mu( 0, N, j )}$ is equal to $e_{N - j}$; see
\cite[Chapter~I, Equation~(3.9)]{Macdonald}.
\end{proof}

We conclude this part with an identity, which shows how the Schur
polynomials $s_{\mu( M, N, j )}$ in Theorem~\ref{Pjacobi-trudi} can be
used to express the Hadamard power $A^{\circ M}$ as a combination of
lower Hadamard powers.

\begin{lemma}\label{Lpower_exp}
Fix integers $M \geq N \geq 1$, and let the $N \times N$ matrix $A$
have entries in~$\F$. Denote the rows of $A$ by $\ba_1$, \ldots,
$\ba_N$. Then
\begin{equation}\label{Emiracle2}
A^{\circ M} = \sum_{j = 0}^{N - 1} D_{M, j}( A ) A^{\circ j},
\end{equation}
where $D_{M, j}( A )$ is the diagonal matrix
\[
( {-1} )^{N - j - 1} \diag\bigl( s_{\mu( M, N, j )}( \ba_1 ), \ldots, 
s_{\mu( M, N, j )}( \ba_N ) \bigr),
\]
and $s_{\mu( M, N, j)}$ is as in Theorem~\ref{Pjacobi-trudi}.
\end{lemma}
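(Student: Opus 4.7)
The plan is to notice that Equation~\eqref{Emiracle2} is a row-by-row statement: the product $D_{M,j}(A)\,A^{\circ j}$ is obtained from $A^{\circ j}$ by scaling its $i$th row by the scalar $(-1)^{N-j-1}s_{\mu(M,N,j)}(\ba_i)$. Comparing $(i,k)$-entries on both sides, it suffices to prove that for each $i,k\in\{1,\ldots,N\}$,
\[
a_{ik}^M = \sum_{j=0}^{N-1} (-1)^{N-j-1} s_{\mu(M,N,j)}(\ba_i)\, a_{ik}^j.
\]
Since $a_{ik}$ is the $k$th coordinate of $\ba_i$, this reduces to the polynomial identity
\[
x_k^M = \sum_{j=0}^{N-1} (-1)^{N-j-1} s_{\mu(M,N,j)}(x_1,\ldots,x_N)\, x_k^j \qquad (1\leq k\leq N)
\]
in $\F[x_1,\ldots,x_N]$. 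Because Schur polynomials have integer coefficients, it is enough to verify the identity on the Zariski-dense locus where the $x_i$ are pairwise distinct.

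\textbf{Vanishing Vandermonde.} To produce the identity I would introduce the $(N+1)\times(N+1)$ generalized Vandermonde matrix $V(y)$ whose top row is $(1,y,y^2,\ldots,y^{N-1},y^M)$ and whose remaining $N$ rows are this same row with $y$ replaced successively by $x_1,\ldots,x_N$. Substituting $y=x_k$ repeats a row, so $\det V(x_k)=0$. On the other hand, expanding along the first row gives
\[
\det V(y) = \sum_{j=0}^{N-1}(-1)^{j} y^j \det M_j + (-1)^{N} y^M \Delta_N(x_1,\ldots,x_N),
\]
where $M_j$ is the $N\times N$ minor obtained by deleting the top row and the column carrying $y^j$. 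The columns of $M_j$ are indexed by the strictly increasing exponent list $0,1,\ldots,j-1,j+1,\ldots,N-1,M$, so the defining formula~\eqref{schurdef} of Schur polynomials gives $\det M_j=\Delta_N(x_1,\ldots,x_N)\,s_{\mu(M,N,j)}(x_1,\ldots,x_N)$. Setting $y=x_k$, dividing through by the nonzero factor $(-1)^{N}\Delta_N(x_1,\ldots,x_N)$, and rearranging yields the scalar identity isolated in the first paragraph.

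\textbf{The main bookkeeping.} The principal (and essentially only) obstacle is identifying the hook $\mu(M,N,j)$ with the exponent list above. Reading the list in reverse as $M>N-1>\cdots>j+1>j-1>\cdots>0$ and subtracting the staircase $(N-1,N-2,\ldots,0)$ term by term produces $(M-N+1,1,\ldots,1,0,\ldots,0)$ with exactly $N-j-1$ ones followed by $j$ zeros, which is precisely $\mu(M,N,j)$. Once this identification is in hand, the entire argument collapses into a single cofactor expansion together with the vanishing of $\det V(x_k)$, so the work is really just careful index tracking inside the Schur-polynomial formalism.
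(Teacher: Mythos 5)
Your proof is correct and is essentially the paper's argument in disguise: the paper solves the Vandermonde system $V\bu=(\bv^{\circ M})^T$ by Cramer's rule and identifies the resulting minors as hook Schur polynomials, while you package the same computation as the cofactor expansion of a bordered $(N+1)\times(N+1)$ Vandermonde determinant $V(y)$ that vanishes at $y=x_k$. These are two phrasings of one Vandermonde-plus-\eqref{schurdef} calculation; the hook identification in your third paragraph is exactly the step the paper performs. One small caveat: as written, your identities $\det M_N = \Delta_N$ and $\det M_j = \Delta_N\,s_{\mu(M,N,j)}$ each omit a factor of $(-1)^{\binom{N}{2}}$, since $\Delta_N=\det(x_i^{N-j})$ (decreasing exponents) while your minors have increasing exponents. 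Because that sign appears identically in both terms, it cancels and your final scalar identity is correct, but it would be cleaner either to state the expansion as $\det V(y)=\sum_{j=0}^{N-1}(-1)^j y^j\det M_j+(-1)^N y^M\det M_N$ and then use the ratio $\det M_j/\det M_N = s_{\mu(M,N,j)}$, or to write $\Delta_N$ with its sign factor throughout.
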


\begin{proof}
Let $\bv := ( v_1, \ldots, v_N )$, where $v_1$, \ldots, $v_N$ are
pairwise distinct and transcendental over~$\F$; we now work in the
field $\F( v_1, \ldots, v_n )$. If $V$ is the Vandermonde matrix
$( v_i^{j - 1} )$ then, by Cramer's rule, the solution to the equation
\[
V \bu = ( \bv^{\circ M} )^T \quad \iff \quad %
\bv^{\circ M} = \sum_{j = 0}^{N - 1} u_{j + 1} \bv^{\circ j}
\]
is given by setting $u_i$ to equal the Schur polynomial
$s_{\mu( M, N, i - 1 )}( \bv )$ times $( {-1} )^{N - i}$, to account
for transposing the $i$th column to the final place. The result now
follows from this identity, applied by specializing $\bv$ to each row
of $A$.
\end{proof}

\subsection{Proof of the main theorem}

Using the technical results on Schur polynomials established above, we
can now prove Theorem~\ref{Tthreshold}.

\begin{proof}[Proof of Theorem~\ref{Tthreshold}]
We begin by proving the result when $0 \leq M \leq N - 1$. In this
case, it follows immediately from Lemma~\ref{Lpos-coeff} that the
theorem holds,
since $\mathcal{C}( \bc; z^M; N, \rho )$ $= c_M^{-1}$ by
Remark~\ref{Rreferee}.

Now assume $M \geq N$. To be consistent with the statement of the
theorem, where the floating coefficient is denoted by $c'$, for
$M \geq N$ we adopt the unifying notation $c_M = c'$. Clearly (1)
implies (3). We will now show that (3) implies (2), and (2) implies
(1).

\noindent$\mathbf{(3) \implies (2).}$
Suppose that $f[ - ]$ preserves positivity on
$\bp_N^1( ( 0, \rho ) )$. By Lemma~\ref{Lpos-coeff}, the first $N$
non-zero coefficients of $f$ are positive, so we suppose $c_M < 0$ and
prove that $c_M \geq {-\mathcal{C}( \bc; z^M; N, \rho   )^{-1}}$.
Define $p_t( x )$ as in Theorem~\ref{Pjacobi-trudi}, with $R = 0$, and
set $t := | c_M |^{-1}$, so that $| c_M |^{-1} f[ - ] = p_t[ - ]$
preserves positivity on rank-one matrices $A = \bu \bu^T$ with
$\bu \in ( 0, \sqrt{\rho} )^n$. Then, by
Equation~\eqref{Ejacobi-trudi},
\begin{equation}\label{Enecessary}
0 \leq \det p_t[ \bu \bu^T ] = t^{N - 1} \Delta_N( \bu )^2 %
c_0 \cdots c_{N - 1} \Bigl( t - \sum_{j = 0}^{N - 1} %
\frac{s_{\mu( M, N, j )}( \bu )^2}{c_j} \Bigr).
\end{equation}
Now set $u_k := \sqrt{\rho} ( 1 - t' \epsilon_k )$, with
$\epsilon_k \in ( 0, 1 )$ pairwise distinct and $t' \in ( 0, 1 )$, so
that $\Delta_N( \bu ) \neq 0$. Taking the limit as $t' \to 0$, since
the final term in Equation~\eqref{Enecessary} must be non-negative, we
conclude that
\begin{align}\label{Elowerbound}
t = |c_M|^{-1} \geq \sum_{j = 0}^{N - 1} %
\frac{s_{\mu( M, N, j )}( \sqrt{\rho}, \ldots, \sqrt{\rho} )^2}{c_j} %
 & = \sum_{j = 0}^{N - 1} s_{\mu( M, N, j )}( 1, \ldots, 1 )^2 %
\frac{\rho^{M - j}}{c_j} \notag \\
 & = \mathcal{C}( \bc; z^M; N, \rho ),
\end{align}
as claimed.

\noindent$\mathbf{(2) \implies (1).}$
Suppose (2) holds. Note that (1) follows if $c_j \geq 0$ for all $j$, by
the Schur product theorem, so we assume that $c_0, \ldots, c_{N - 1} > 0$
and $-\mathcal{C}( \bc; z^M; N, \rho )^{-1} \leq c_M < 0$.

We first show that $f[ - ]$ preserves positivity on
$\bp_N^1(\overline{D}( 0, \rho ) )$. For $1 \leq m \leq N$, let
\begin{equation}
C_m := \sum_{j = 0}^{m - 1} s_{\mu( M-N+m, m, j )}( 1, \ldots, 1 )^2 %
\frac{\rho^{m + M-N - j}}{c_{N - m + j}}
= \mathcal{C}( \bc_m; z^{M - N + m}; m, \rho ),
\end{equation}
where $\bc_m := ( c_{N - m}, \ldots, c_{N - 1} )$. Note in this case
that $C_N$ is precisely $\mathcal{C}( \bc; z^M; N, \rho )$ as defined
in Equation~\eqref{Erankone} (and $N$, $M$ are fixed). It follows from
Theorem~\ref{Pjacobi-trudi} that $C_1 = \rho^{M - N + 1} / c_{N - 1}$
and, for $1 \leq m \leq N - 1$,
\begin{align*}
&C_{m + 1} - C_m \\
 & \geq \sum_{j = 0}^{m - 1} \bigl( %
s_{\mu( M - N + m + 1, m + 1, j + 1 )}( 1, \ldots, 1 )^2 - %
s_{\mu( M - N + m + 1, m, j )}( 1, \ldots, 1 )^2 \bigr) %
\frac{\rho^{m + M - N - j}}{c_{N - m + j}} \\
 & > 0,
\end{align*}
since
\begin{align*}
\frac{s_{\mu( M - N + m + 1, m + 1, j + 1 )}( 1, \ldots, 1 )}%
{s_{\mu( M - N + m, m, j )}( 1, \ldots, 1 )} & = %
\binom{m + 1 + M - N}{j + 1} \binom{m + M - N}{j}^{-1} \\
 & = \frac{m + 1 + M - N}{j + 1} > 1.
\end{align*}
Thus $0 < C_1 < C_2 < \cdots < C_N$.

Next, we claim that for $1 \leq m \leq N$ and all
$A = \bu \bu^* \in \bp_N^1( \overline{D}( 0, \rho ) )$, every
principal $m \times m$ submatrix of the matrix
\begin{equation}\label{Ematrix}
L := C_m ( c_{N - m} \one{N} + c_{N - m + 1} A + \cdots + %
c_{N - 1} A^{\circ ( m - 1 )} ) - A^{\circ ( m + M - N )}
\end{equation}
is positive semidefinite; for $m = N$, this gives immediately the
rank-one case of~(1).

The proof of the claim is by induction. The case $m = 1$ is immediate,
since a general diagonal entry of~$L$ for $m = 1$ equals
$C_1 c_{N - 1} - a_{j j}^{M - N + 1} = %
\rho^{M - N + 1} - a_{j j}^{M - N + 1} \geq 0$.
Now suppose the result holds for $m - 1 \geq 1$. In the remainder of
this proof, we adopt the following notation: given a non-empty set
$\bn \subset \{ 1, \ldots, N \}$ and an $N \times N$ matrix $B$,
denote by $B_\bn$ the principal submatrix of $B$ consisting of those
rows and columns labelled by elements of $J$; we adopt a similar
convention for the subvector $\bu_\bn$ of a vector $\bu$. If
$\bn \subset \{ 1, \ldots, N \}$ has cardinality $m$ then, by
Theorem~\ref{Pjacobi-trudi} with $\bv = \overline{\bu_\bn}$,
\[
\det L_\bn = %
C_m^{m - 1} | \Delta_m( \bu_\bn )|^2 \prod_{j = 1}^m c_{N - j} %
\Bigl( C_m - \sum_{j = 0}^{m - 1} %
\frac{| s_{\mu( M - N + m, m, j )}( \bu_\bn ) |^2}{c_{N - m + j}} \Bigr).
\]
Using the triangle inequality in $\C$ and the fact that the
coefficients of any Schur polynomial are non-negative, it follows
immediately that
\begin{align}\label{Etriangle}
| s_{\mu( M - N + m, m, j )}( \bu_\bn ) |^2 & \leq %
s_{\mu( M - N + m, m, j )}( \sqrt{\rho}, \ldots, \sqrt{\rho} )^2 %
\notag \\
 & = %
s_{\mu( M - N + m, m, j )}( 1, \ldots, 1 )^2 \rho^{m + M - N - j},
\end{align}
and so, by the choice of $C_m$, the determinant $\det L_\bn \geq 0$
for any set $\bn \subset \{ 1, \ldots, N \}$ of cardinality $m$.
Additionally, for each non-empty subset $\bn$ of cardinality $k < m$,
\begin{align*}
L_\bn & \geq C_m ( c_{N - k} A_\bn^{\circ ( m - k )} + \cdots + %
c_{N - 1} A_\bn^{\circ ( m - 1 )} ) - A_\bn^{\circ ( m + M - N)} \\
 & \geq A_\bn^{\circ ( m - k )} \circ \bigl( C_k %
( c_{N - k} \one{k} + \cdots + c_{N - 1} A_\bn^{\circ ( k - 1 )} ) %
- A_\bn^{\circ ( k + M - N )} \bigr),
\end{align*}
since $C_m > C_k$. It follows by the Schur product theorem and the
induction hypothesis that $\det L_\bn \geq 0$ for any non-empty subset
$\bn$ of cardinality $k \leq m$. Hence all principal $m \times m$
submatrices of $L$ are positive semidefinite, which concludes the
proof of the claim by induction. In particular, $f[ - ]$ preserves
positivity on $\bp_N^1( \overline{D}( 0, \rho ) )$.

We now prove that (1) holds for matrices of any rank; again we proceed
by induction. For $N = 1$, the result holds because
$\bp_1( \overline{D}( 0, \rho ) ) = %
\bp_1^1( \overline{D}( 0, \rho ) )$. Now suppose (1) holds for
$N - 1 \geq 1$, and let
\[
p_t[ B; M, \bd ] := t ( d_0 {\bf 1} + d_1 B + \cdots + %
d_{n - 1} B^{\circ ( n - 1 )}) - B^{\circ ( n + M )},
\]
for any square matrix $B$ of arbitrary order,
where $t$ is a real scalar and $n$ is the
length of the tuple $\bd = ( d_0, \ldots, d_{n - 1} )$. It suffices to
show that $p_t[ A; M - N, \bc ] \geq 0$ for
all~$t \geq \mathcal{C}( \bc; z^M; N, \rho )$ and all
$A = ( a_{i j} ) \in \bp_N( \overline{D}( 0, \rho ) )$.

By a lemma of FitzGerald and Horn \cite[Lemma~2.1]{FitzHorn}, if
$\bu \in \C^N$ is defined to equal
$( a_{i N} / \sqrt{a_{N N}} )_{i = 1}^N$ when $a_{N N} \neq 0$,
and is the zero vector otherwise, then $A - \bu \bu^*$ is positive
semidefinite and its final column and row are both zero. Furthermore,
the principal minor
\[
\Bigl| \begin{array}{cc} a_{i i} & a_{i N} \\
\overline{a_{i N}} & a_{N N} \end{array} \Bigr| = %
a_{i i} a_{N N} - | a_{i N} |^2 \ge 0,
\]
so the entries of $\bu \bu^*$ lie in $\overline{D}( 0, \rho )$. Since
\[
f( z ) - f( w ) = %
\int_0^1 ( z - w ) f'( \lambda z + ( 1 - \lambda ) w ) \std \lambda %
\qquad ( z, w \in \C )
\]
for any entire function $f$, it follows that
\begin{equation}\label{eqn:convex}
p_t[ A; M - N, \bc ] = p_t[ \bu \bu^*; M - N, \bc ]
+ \int_0^1 ( A - \bu \bu^* ) \circ %
M p_{t / M}[ \lambda A + ( 1 - \lambda ) \bu \bu^*; M - N,
\bc' ] \std \lambda,
\end{equation}
where the $(N - 1)$-tuple
$\bc' := (c _1, 2 c_2, \ldots, ( N - 1 ) c_{N - 1} )$. Now, since
$A - \bu \bu^*$ has last row and column both zero, the integrand
in~\eqref{eqn:convex} is positive semidefinite if the matrix
$p_{t / M}[ A_\lambda; M - N, \bc' ]$ is, where
$A_\lambda \in \bp_{N - 1}( \overline{D}( 0, \rho ) )$ is obtained by
deleting the final row and column of
$\lambda A + ( 1 - \lambda ) \bu \bu^*$. Thus, if we show the
inequality
\begin{equation}\label{Eineq}
\mathcal{C}( \bc; z^M; N, \rho ) \geq %
M \cdot \mathcal{C}( \bc'; z^{M - 1}; N - 1, \rho ),
\end{equation}
then both terms in Equation~\eqref{eqn:convex} are positive
semidefinite for all $t \geq \mathcal{C}( \bc; z^M; N, \rho )$,
by the induction hypothesis, and this gives the result.

Finally, we prove \eqref{Eineq}: note that
\begin{align*}
M \cdot \mathcal{C}( \bc'; z^{M-1}; N-1, \rho ) & = %
M \sum_{j = 0}^{N - 2} \binom{M - 1}{j}^2 %
\binom{M - 1 - j - 1}{N - 1 - j - 1}^2 %
\frac{\rho^{M - 1 - j}}{( j + 1 ) c_{j + 1}} \\
& = \sum_{j = 0}^{N - 2} \binom{M}{j + 1}^2 %
\binom{M - ( j + 1 ) - 1}{N - ( j + 1 ) - 1}^2 %
\frac{\rho^{M - ( j + 1 )}}{c_{j + 1}} \cdot \frac{j + 1}{M} \\
& = %
\sum_{j = 1}^{N - 1} \binom{M}{j}^2 \binom{M - j - 1}{N - j - 1}^2 %
\frac{\rho^{M - j}}{c_j} \cdot \frac{j}{M} \\
& \leq \mathcal{C}( \bc; z^M; N, \rho ).
\end{align*}
Thus (2) implies (1), and so concludes the proof of the theorem.
\end{proof}

\begin{remark}
Theorem~\ref{Tthreshold} shows further that,
for any subset $K \subset \C$ satisfying
\[
( 0, \rho ) \subset K \subset \overline{D}( 0, \rho),
\]
the polynomial
$f( z ) := %
c_0 + c_1 z + \cdots + c_{N - 1} z^{N - 1} + c_M z^M$
preserves positivity on $\bp_N( K )$ if and only if
$( c_0, \ldots, c_{N - 1}, c_M )$ satisfies
Theorem~\ref{Tthreshold}(2). More generally, given any class $\bp$ of
matrices such that
$\bp_N^1( ( 0, \rho ) ) \subset \bp \subset %
\bp_N( \overline{D}( 0, \rho ) )$,
Theorem~\ref{Tthreshold} implies that $f[ - ]$ preserves positivity on
$\bp$ if and only if $( c_0, \ldots, c_{N - 1}, c_M )$ satisfies
Theorem~\ref{Tthreshold}(2). Similarly, Theorem~\ref{Tthreshold} shows
the surprising result that to preserve positivity on all of
$\bp_N( \overline{D}( 0, \rho ) )$ is equivalent to preserving
positivity on the much smaller subset of real rank-one matrices
$\bp_N^1( ( 0, \rho ) )$.
\end{remark}

\subsection{Positivity preservers: sufficient conditions}\label{Sanalytic}

At this point, extensions of Theorem~\ref{Tthreshold} to more general
classes of functions are within reach. We first introduce some
notation.

\begin{definition}\label{Drayleigh}
Given $K \subset \C$, functions $g$, $h: K \to \C$, and a set of
positive semidefinite matrices
$\bp \subset \bigcup_{N = 1}^\infty \bp_N( K )$, let
$\mathcal{C}( h; g; \bp )$ be the smallest real number such that
\begin{equation}\label{Erayleigh}
g[ A ] \leq \mathcal{C}( h; g; \bp ) \cdot h[ A ], %
\qquad \forall A \in \bp.
\end{equation}

\noindent In other words, the constant $\mathcal{C}( h; g; \bp )$ is the
\emph{extreme critical value} of the family of linear pencils
$\{ g[A] - \R h[A] : A \in \bp \}$. If
$h_\bc( z ) := c_0 + c_1 z + \cdots + c_{n - 1} z^{n - 1}$ is a polynomial
with coefficients $\bc := ( c_0, \ldots, c_{n - 1} )$, then we let
\[
\mathcal{C}( \bc; g; \bp ) := \mathcal{C}( h_\bc; g; \bp )
\]

\noindent to simplify the notation. Similarly, if
$\bp = \bp_N( \overline{D}( 0, \rho ) )$ for some
$\rho \in ( 0, \infty )$ and an integer $N \geq 1$, then we let
\[ 
\mathcal{C}( h; g; N, \rho ) :=
\mathcal{C}( h; g; \bp_N( \overline{D}( 0, \rho ) ) ).
\]
Finally, let
\[
\mathcal{C}( \bc; g; N, \rho ) := %
\mathcal{C}( c_0 + \cdots + c_{n - 1} z^{n - 1}; g; %
\bp_N( \overline{D}( 0, \rho ) ) )
\]
for $\rho \in ( 0, \infty )$ and integers $n$, $N \geq 1$.
\end{definition}

Note that the notation introduced in Definition~\ref{Drayleigh} is
consistent with the notation used in the previous sections of the paper.
Also, note for future use that the constant $\mathcal{C}( h; g; \bp )$
is non-increasing in the last argument, as well as $\R_+$-subadditive in
the second argument:
\begin{equation}\label{Esubadditive}
\mathcal{C}( h; \lambda_1 g_1 + \cdots + \lambda_n g_n; \bp ) %
\leq \sum_{j = 1}^n \lambda_j \mathcal{C}( h; g_j ; \bp ), \qquad %
\forall \lambda_1, \ldots, \lambda_n \geq 0.
\end{equation}

We now show how Theorem~\ref{Tthreshold} can naturally be extended to
arbitrary polynomials.

\begin{corollary}
Fix a bounded subset $K \subset \C$, and integers $N \geq 1$ and
$M \geq 0$. There exists a universal constant
$\mathfrak{h}_{N, M}( K ) > 0$ depending only on $N$, $M$, and $K$,
with the following property: for any integer $R \geq 0$ and any
polynomial $f( x ) = x^R \sum_{k = 0}^{N + M} c_k x^k$ with real
coefficients such that
\begin{enumerate}
\item $c_0$, \ldots, $c_{N - 1} > 0$, and 
\item
$\min\{ c_k : 0 \leq k \leq N-1 \} \geq \mathfrak{h}_{N, M}( K ) %
\cdot \max\{ |c_l| : c_l < 0 \}$,
\end{enumerate}
we have $f[ - ] : \bp_N( K ) \to \bp_N( \C )$.
\end{corollary}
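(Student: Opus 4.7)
The plan is to reduce the corollary to multiple applications of Theorem~\ref{Tthreshold} via a splitting argument. First, since $K$ is bounded, fix $\rho > 0$ with $K \subset \overline{D}(0,\rho)$. The Schur product theorem shows that $A^{\circ R}$ is positive semidefinite whenever $A$ is, and that Hadamard products preserve $\bp_N(\C)$; hence it suffices to treat the case $R = 0$. By hypothesis, any index $l$ with $c_l < 0$ satisfies $N \leq l \leq N + M$.

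Let $\mathcal{N} := \{l : N \leq l \leq N + M, \ c_l < 0\}$ and $L := |\mathcal{N}|$. If $L = 0$, all coefficients are non-negative and the claim follows immediately from the Schur product theorem; assume $L \geq 1$. Distribute the leading positive coefficients uniformly across the $L$ negative monomials, writing
\[
f(z) = \sum_{l \in \mathcal{N}} \Bigl( \sum_{k=0}^{N-1} \frac{c_k}{L} z^k + c_l z^l \Bigr) + \sum_{\substack{N \leq l \leq N+M \\ l \notin \mathcal{N}}} c_l z^l.
\]
The trailing sum has non-negative coefficients, so its entrywise action preserves positivity. It remains to control each summand indexed by $l \in \mathcal{N}$ individually.

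Apply Theorem~\ref{Tthreshold}(2) to $\sum_{k=0}^{N-1} (c_k/L) z^k + c_l z^l$. Since the first $N$ coefficients $c_k/L$ are positive, the sufficient condition reads
\[
|c_l| \leq \mathcal{C}\bigl((c_0/L,\ldots,c_{N-1}/L);\, z^l;\, N, \rho\bigr)^{-1} = \frac{1}{L \cdot \mathcal{C}(\bc;\, z^l;\, N, \rho)},
\]
where the last equality follows from the defining formula~\eqref{Erankone} since each term there is linear in $1/d_j$. Setting $c_{\min} := \min_{0 \leq k \leq N-1} c_k$ and using~\eqref{Erankone} once more yields the crude bound $\mathcal{C}(\bc;z^l;N,\rho) \leq C_l / c_{\min}$, where
\[
C_l := \sum_{j=0}^{N-1} \binom{l}{j}^2 \binom{l-j-1}{N-j-1}^2 \rho^{l-j}.
\]
So it suffices to have $|c_l| \cdot L \cdot C_l \leq c_{\min}$ for every $l \in \mathcal{N}$, which is guaranteed by the hypothesis $c_{\min} \geq \mathfrak{h}_{N,M}(K) \cdot \max\{|c_l|: c_l<0\}$ once we set
\[
\mathfrak{h}_{N, M}(K) := (M+1) \max_{N \leq l \leq N+M} C_l,
\]
since $L \leq M+1$. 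This quantity depends only on $N$, $M$ and $\rho$ (hence on $K$), as required.

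There is no serious obstacle: the argument is bookkeeping layered on Theorem~\ref{Tthreshold}, essentially invoking the $\R_+$-subadditivity~\eqref{Esubadditive} of $\mathcal{C}$ with uniform weights. The only point requiring verification is the scaling identity $\mathcal{C}(\bc/L; z^l; N, \rho) = L \cdot \mathcal{C}(\bc; z^l; N, \rho)$, which is immediate from~\eqref{Erankone}. The resulting constant $\mathfrak{h}_{N,M}(K)$ is crude — sharper choices are available by unequally weighting the split of the low-degree coefficients among the $L$ negative monomials — but the statement only asserts existence.
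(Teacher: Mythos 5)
Your proof is correct, and it lands in the same circle of ideas as the paper's (reduce to $R=0$ via the Schur product theorem, then invoke Theorem~\ref{Tthreshold} and the $\R_+$-subadditivity of $\mathcal{C}$), but the decomposition is organized differently. The paper bounds $f[A]$ in a single stroke by
\[
f[A] \geq \Bigl(\min_{0 \leq k \leq N-1} c_k\Bigr) \sum_{j=0}^{N-1} A^{\circ j} + \Bigl(\min_{l \geq N : c_l < 0} c_l\Bigr) \sum_{j=N}^{N+M} A^{\circ j},
\]
and then applies~\eqref{Esubadditive} once, arriving at the constant $\mathfrak{h}_{N,M}(K) = \sum_{m=0}^M \mathcal{C}((1,\ldots,1); z^{N+m}; N, \rho)$. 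You instead split $f$ explicitly into $L$ polynomials, each of the exact form $\sum_{k<N} (c_k/L) z^k + c_l z^l$ appearing in Theorem~\ref{Tthreshold}, plus a residual nonnegative-coefficient part, and apply the theorem pointwise to each summand. The scaling identity $\mathcal{C}(\bc/L; z^l; N, \rho) = L\,\mathcal{C}(\bc; z^l; N, \rho)$ and the crude bound $\mathcal{C}(\bc; z^l; N, \rho) \leq C_l/c_{\min}$ are both correct readings of~\eqref{Erankone}. Your constant $(M+1) \max_l C_l = (M+1)\max_l \mathcal{C}((1,\ldots,1); z^l; N, \rho)$ is a looser (larger) choice than the paper's sum, but the corollary only claims existence, so this is immaterial. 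In short: same ingredients, a somewhat more hands-on bookkeeping that trades a sharper constant for a more transparent reduction to the main theorem's template.
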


\begin{proof}
Without loss of generality, we may assume $R = 0$; the general case
follows from the Schur product theorem.  Now fix $\rho > 0$ such that
$K \subset \overline{D}( 0, \rho )$, and let
\begin{equation}\label{EgeneralCte}
\mathfrak{h}_{N, M}( K ) := \sum_{m = 0}^M %
\mathcal{C}( ( 1, \ldots, 1 ); z^{N+m}; N, \rho )
\in ( 0, \infty ),
\end{equation}
where the first argument has $N$ ones. Now given $A \in \bp_N( K )$,
one finds
\[
f[ A ] = \sum_{j = 0}^{N - 1} c_j A^{\circ j} + %
\sum_{j = N}^{N + M} c_j A^{\circ j} \geq %
\min_{0 \leq k \leq N - 1} c_k  \sum_{j = 0}^{N - 1} A^{\circ j} + %
\min_{l \geq N : c_l < 0} c_l \sum_{j = N}^{N + M} A^{\circ j}.
\]
The result now follows immediately from Equation~\eqref{Esubadditive}.
\end{proof}

Next we show how our main result extends from polynomial to analytic
functions.

\begin{proof}[Proof of Theorem~\ref{Tanalytic}]
Without loss of generality, we may assume that
$c_M \geq 0$ for all $M \geq N$, and, via a standard approximation
argument, that the series in the statement is analytic in the disc
$D( 0, \rho + \epsilon )$, where $\epsilon  > 0$.

The first part is immediate from Theorem~\ref{Tthreshold}
and Equation~\eqref{Esubadditive}. To establish the
bound~\eqref{Eanalytic-bound}, note first that, for
$0 \leq j \leq N - 1$, we have
\begin{align*}
\binom{M}{j} \binom{M - j - 1}{N - j - 1} & = %
\frac{M!}{j! (M-N)! (N - j - 1)! ( M - j )} \\
& \leq \frac{M!}{j! ( M - N + 1 )! ( N - j - 1 )!}
 = \binom{N - 1}{j} \binom{M}{N - 1}.
\end{align*}
Using the above analysis and Tonelli's theorem, we compute:
\begin{align*}
&\sum_{M = N}^\infty c_M \mathcal{C}( \bc; z^M; N, \rho ) \\
 & = \sum_{j = 0}^{N - 1} \frac{\rho^{N - j}}{c_j} %
\sum_{M = N}^\infty c_M \binom{M}{j}^2 %
\binom{M - j - 1}{N - j - 1}^2 \rho^{M - N} \\
 & \leq \! \frac{1}{( N - 1 )!^2} \sum_{j = 0}^{N - 1} %
\binom{N - 1}{j}^2 \frac{\rho^{N - j}}{c_j} \!\! %
\sum_{M = N}^\infty c_M \rho^{M - N} %
\prod_{k = 2}^N ( M - N + k )^2 \\
 & \leq \! \frac{1}{( N - 1 )!^2} \sum_{j = 0}^{N - 1} %
\binom{N - 1}{j}^2 \frac{\rho^{N - j}}{c_j} \!\! %
\sum_{M = N}^\infty c_M \rho^{M - N} %
\prod_{k = 2}^N \frac{( 2 ( M - N ) + k + 1 )( 2 M - N + k )}{2} \\
 & = \frac{2^{-( N - 1 )}}{( N - 1 )!^2} \sum_{j = 0}^{N - 1} %
\binom{N - 1}{j}^2 \frac{\rho^{N - j - 1}}{c_j} %
\sum_{M = N}^\infty c_M \prod_{k=0}^{2N-3} ( 2 M - k ) \cdot %
( \sqrt{\rho} )^{2 M - 2 N + 2}.
\end{align*}
Notice that $g_2$ is analytic on $D( 0, \sqrt{\rho + \epsilon} )$,
since $g$ is analytic on $D( 0, \rho + \epsilon )$. Therefore the
inner sum is precisely the $( 2 N - 2 )$th derivative of
$g_2( z ) = g_+( z^2 )$, evaluated at $z = \sqrt{\rho}$. This
concludes the proof.
\end{proof}

\begin{remark}
The quantity $g_2^{(2 N - 2 )}( \sqrt{\rho} )$ can be written in terms
of the derivatives of $g_+$ at $z = \rho$; it may be shown by
induction that
\begin{equation}
\frac{\rd^n}{\rd x^n} \Bigl( g_+( x^2 ) \Bigr) = %
\sum_{k = 0}^{\lfloor n / 2 \rfloor} \frac{n!}{( n - 2 k)! k!} %
( 2 x )^{n - 2 k} g_+^{( n - k )}( x^2 ) \qquad ( n \geq 0 ).
\end{equation}
This shows that one has explicit bounds for
$\mathcal{C}( \bc; g; N, \rho ) \leq \mathcal{C}( \bc; g_+; N, \rho )$
in terms of the derivatives of $g_+$ at $\rho$.
\end{remark}

We conclude this part by showing how Theorem~\ref{Tthreshold} yields
an asymptotically sharp bound for the matrix cube problem for Hadamard
powers.

\begin{proof}[Proof of Corollary~\ref{Ccube}]
Note that $\mathcal{U}[ \eta ] \subset \bp_N( \C )$ if and only if
\[
A_0 - \eta \sum_{m = 1}^{M + 1} A_m = %
c_0 \one{N} + A + \cdots + c_{N - 1} A^{\circ ( N - 1 )} - %
\eta ( A^{\circ N} + \cdots + A^{\circ ( N + M )} ) \in \bp_N( \C ).
\]
Thus the first implication follows by Theorem~\ref{Tanalytic}. The
other implication follows by setting $u_1 = \cdots = u_M = 0$ in
Equation~\eqref{Ecube} and applying Theorem~\ref{Tthreshold}.

It remains to show the asymptotics in \eqref{Easymptotic}. For this it
suffices to show that
\begin{equation}\label{Estirling}
\lim_{N \to \infty} %
\frac{\mathcal{C}( \bc; z^{N+m}; N, \rho )}%
{\mathcal{C}( \bc; z^{N+M}; N, \rho )} = 0 \qquad %
( 0 \leq m \leq M - 1 ).
\end{equation}
In turn, to show \eqref{Estirling} we first fix $N$ and $\rho$, and
write out each summand in the numerator and denominator of
\eqref{Estirling} as follows:
\[
a( m, j ) := \binom{N + m}{j}^2 \binom{N + m - j - 1}{N - j - 1}^2 %
\frac{\rho^{N + m - j}}{c_j}.
\]
Next we bound the ratios of the summands for fixed $N$, $m$, $M$, and
$\rho$, and for $0 \leq j \leq N - 1$:
\begin{align*}
\frac{a( m, j )}{a( M, j )} & = \Bigl( %
\frac{( N + m )! M! ( N + M - j )}{m! ( N + M )! ( N + m - j )}
\Bigr)^2 \rho^{m - M} \\
 & = \frac{1}{( N + M )^2 \cdots ( N + m + 1 )^2} %
\Bigl( \frac{M!}{m!} \cdot \frac{N + M - j}{N + m - j} \Bigr)^2 %
\rho^{m - M} \\
 & \leq \frac{1}{N^{2 ( M - m )}} %
\Bigl( \frac{M!}{m!} \cdot \frac{N + M - j}{N + m - j} \Bigr)^2 %
\rho^{m - M} \\
 & \leq \frac{1}{N^{2 ( M - m )}} %
\Bigl( \frac{( M + 1 )!}{( m + 1 )!} \Bigr)^2 \rho^{m - M},
\end{align*}
where the final step follows from the observation that if
$M > m \geq 0$ are fixed, and $N - j = \alpha \geq 1$, then
$\frac{\alpha + M}{\alpha + m} = 1 + \frac{M - m}{\alpha + m}$ has
its global maximum at $\alpha = 1$. Now let
\[
b( m, M, \rho ) := %
\Bigl( \frac{( M + 1 )!}{( m + 1 )!} \Bigr)^2 \rho^{m - M},
\]
so that $a( m, j ) \leq a( M, j ) b( m, M, \rho ) N^{-2 ( M - m )}$
for $0 \leq j \leq N - 1$. Hence
\[
0 \leq
\frac{\mathcal{C}( \bc; z^{N+m}; N, \rho )}%
{\mathcal{C}( \bc; z^{N+M}; N, \rho )} \leq %
b( m, M, \rho ) N^{-2 ( M - m )} \to 0
\]
as $N \to \infty$, as required.
\end{proof}

\subsection{Case study: $2 \times 2$ matrices}

Remark that Theorem~\ref{Tthreshold} holds for any integer $N \geq 1$.
When $N = 2$, it is possible to prove a characterization result for
polynomials preserving positivity on $\bp_2$, for a more general
family of polynomials than in Theorem~\ref{Tthreshold}. Along these
lines, we conclude the present section with the following result.

\begin{theorem}\label{T2x2}
Given non-negative integers $m < n < p$, let
$f( x ) = c_m x^m + c_n x^n + c_p x^p$, with $c_m$ and $c_n$ both
non-zero. The following are equivalent.
\begin{enumerate}
\item The entrywise function $f[ - ]$ preserves positivity on
$\bp_2( [ 0, 1 ] )$.

\item The entrywise function $f[ - ]$ preserves positivity on
$\bp_2^1( [ 0, 1 ])$.

\item $c_m$, $c_n > 0$, and
\[
c_p \geq %
\frac{-c_m c_n ( n - m )^2}{c_m ( p - m )^2 + c_n ( p - n )^2}.
\]
\end{enumerate}
Note that if $c_p < 0$ then such an entrywise function does not
preserve positivity on $\bp_3( [ 0, 1 ] )$.
\end{theorem}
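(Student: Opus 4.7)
The plan is to establish the cycle $(1) \Rightarrow (2) \Rightarrow (3) \Rightarrow (1)$. The implication $(1) \Rightarrow (2)$ is trivial, and the closing remark follows at once from Lemma~\ref{Lpos-coeff} with $N = 3$: preservation on $\bp_3^1([0,1])$ forces all three non-zero coefficients of $f$ to be strictly positive, so $c_p < 0$ is incompatible with preservation on $\bp_3([0,1])$.

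For $(2) \Rightarrow (3)$, Lemma~\ref{Lpos-coeff} with $N = 2$ gives $c_m, c_n > 0$, so I may assume $c_p < 0$. Testing on a rank-one matrix $\bu \bu^T$ with $\bu = (x, y)^T \in [0,1]^2$ and computing $\det f[\bu \bu^T]$ either directly from $f(x^2)f(y^2) - f(xy)^2$ or via Proposition~\ref{Pcauchy-binet} yields
\begin{equation*}
\det f[\bu \bu^T] = c_m c_n (x^m y^n - x^n y^m)^2 + c_m c_p (x^m y^p - x^p y^m)^2 + c_n c_p (x^n y^p - x^p y^n)^2,
\end{equation*}
so non-negativity is equivalent to $|c_p| \leq R(x, y)$ for an explicit ratio. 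Setting $x = 1$, $y = r$, and letting $r \to 1^-$ produces the necessary bound $|c_p| \leq L := c_m c_n (n - m)^2 /[c_m (p - m)^2 + c_n (p - n)^2]$. The main technical step is to verify that this limit is genuinely the global infimum of $R$ on $[0, 1]^2$ (with $x \neq y$), not merely a critical value; I would reduce to the one-parameter family $R(1, r)$ for $r \in (0, 1)$ and prove $R(1, r) \geq L$ as the sum of two term-wise inequalities, one equivalent to the strict decreasing of $k \mapsto (1 - r^k)/k$ in $k$ (reducing via $u = r^k$ to the elementary bound $1 - u + u \log u > 0$ on $(0, 1)$), and the other to the weighted AM--GM inequality $b + a r^{a + b} \geq (a + b) r^a$.

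For $(3) \Rightarrow (1)$, I would establish three auxiliary properties of $f$. (i) \emph{$f \geq 0$ on $[0, 1]$}: writing $f(t) = t^m \phi(t^{n-m})$ exhibits $\phi(s) = c_m + c_n s + c_p s^{(p-m)/(n-m)}$ as a concave function of $s \in [0, 1]$ when $c_p \leq 0$ (since the exponent exceeds $1$), so non-negativity at the endpoints $s = 0$ and $s = 1$ extends throughout; the endpoint $s = 1$ reduces to $L \leq c_m + c_n$, an immediate consequence of the identity $(p-m)^2 + (p-n)^2 - (n-m)^2 = 2(p-m)(p-n) > 0$. (ii) \emph{$f$ is non-decreasing on $[0, 1]$}: the analogous concave-in-power trick applied to the bracket in $f'(x) = x^{m-1}[m c_m + n c_n x^{n-m} + p c_p x^{p-m}]$ (with the obvious modification when $m = 0$) reduces to the endpoint estimate $p|c_p| \leq m c_m + n c_n$ at $x = 1$, which follows from the algebraic factorization $m(p-n)^2 + n(p-m)^2 - p(n-m)^2 = (m + n)(p - m)(p - n) \geq 0$. (iii) \emph{Rank-one preservation}: the displayed determinant identity combined with $c_p \geq -L$ gives $\det f[\bu \bu^T] \geq 0$ on $[0, 1]^2$, and the diagonal entries are non-negative by (i).

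With (i)--(iii) in hand, the general case is a one-line comparison. For any $A = \begin{pmatrix} a & c \\ c & b \end{pmatrix} \in \bp_2([0, 1])$ with $c^2 \leq ab$, the monotonicity from (ii) together with the non-negativity from (i) gives $0 \leq f(c) \leq f(\sqrt{ab})$, whence
\begin{equation*}
\det f[A] = f(a) f(b) - f(c)^2 \geq f(a) f(b) - f(\sqrt{ab})^2 = \det f[\bv \bv^T]
\end{equation*}
with $\bv := (\sqrt{a}, \sqrt{b})^T \in [0,1]^2$; the right-hand side is non-negative by (iii), and combined with $f(a), f(b) \geq 0$ from (i) this yields $f[A] \in \bp_2(\R)$, completing the cycle. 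The principal obstacle is the infimum computation in the $(2) \Rightarrow (3)$ direction: showing that the limit as $r \to 1^-$ is the true minimum of $R(1, r)$ on $[0, 1)$, as the two underlying bounds are elementary but require the correct algebraic packaging.
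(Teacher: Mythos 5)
Your proof is correct in substance but takes a genuinely different route from the paper's for $(3) \Rightarrow (1)$. The paper cites \cite[Theorem~2.5]{GKR-lowrank} as a black box (preservation on $\bp_2([0,1])$ is equivalent to $f$ being non-negative, non-decreasing, and satisfying $f(xy)^2 \leq f(x^2)f(y^2)$), and then invokes \cite[Exercise~2.4.4]{niculescu} to collapse the last condition to the \emph{one-variable} criterion $\Psi_f(x) = x(f''f - (f')^2) + ff' \geq 0$, whose threshold is read off from a single limit $x \to 1^-$. You re-derive the GKR criterion from scratch (your comparison $\det f[A] \geq f(a)f(b) - f(\sqrt{ab})^2$ is exactly its content) and establish the multiplicative-convexity condition by proving $R(x,y) \geq L$ over all of $[0,1]^2$, bypassing both citations. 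This makes your argument self-contained, at the price of a genuinely two-variable infimum computation where the paper needs only one variable; both routes are valid.

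One gap should be closed. Your reduction of the infimum of $R$ to the one-parameter family $R(1, r)$ is asserted without justification, and $R$ is not homogeneous, so it is not an immediate scaling statement. It does follow from a degree count: the numerator $c_m c_n(x^m y^n - x^n y^m)^2$ has total degree $2(m+n)$, while the two denominator terms have degrees $2(m+p)$ and $2(n+p)$, both strictly larger since $p > \max(m,n)$; hence $R(\lambda x, \lambda y) \leq R(x, y)$ for all $\lambda \geq 1$, and $\lambda := 1/\max(x,y)$ pushes any point to the face $\{ \max(x,y) = 1 \}$ without increasing $R$. You should spell this out. Also, this infimum claim belongs to step (iii) of $(3) \Rightarrow (1)$ rather than to $(2) \Rightarrow (3)$, where you placed it: the necessity direction only requires the single limit $r \to 1^-$, exactly as the paper observes. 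The rest of your sketch, including the concavity-in-powers arguments for (i) and (ii), the two term-wise inequalities (via $1 - u + u\log u > 0$ and weighted AM--GM), and the closing appeal to Lemma~\ref{Lpos-coeff}, is sound.
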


In the special case $m = 0$ and $n = 1$, note that the bound
$\frac{- c_m c_n ( n - m )^2}{c_m ( p - m )^2 + c_n ( p - n )^2}$
reduces to the constant $-\mathcal{C}( \bc; p; 2, 1 )^{-1}$.

\begin{proof}
Clearly (1) implies (2). To see why (2) implies (3), note that $c_m$,
$c_n > 0$ by Lemma~\ref{Lpos-coeff}. Now let $u_1$, $u_2 \in [ 0, 1 ]$
be distinct and let
$f_m( u_1, u_2 ) := ( u_1^m - u_2^m ) / ( u_1 - u_2 )$ denote the
divided difference of their $m$th powers. With $\bu = ( u_1, u_2 )^T$,
\begin{align*}
0 \leq \det f[ \bu \bu^T ] & = %
f( u_1^2 ) f( u_2^2 ) - f( u_1 u_2 )^2 \\
 & = c_m c_n ( u_1 u_2 )^{2 m} ( u_2^{n - m} - u_1^{n - m} )^2 + %
c_m c_p ( u_1 u_2 )^{2 m} ( u_2^{p - m} - u_1^{p - m} )^2 \\
 & \qquad + c_n c_p ( u_1 u_2 )^{2 n} ( u_2^{p - n} - u_1^{p - n} )^2,
\end{align*}
which implies that
\[
c_p \geq \frac{-c_m c_n f_{n - m}( u_1, u_2)^2}%
{c_m f_{p - m}( u_1, u_2 )^2 + c_n ( u_1 u_2 )^{2 ( n - m )} %
f_{p - n}( u_1, u_2 )^2}.
\]
Letting $u_1 \to 1 = u_2$ yields (3).

Finally, suppose (3) holds. Then (1) holds if and only
if $f( x y )^2 \leq f( x^2 ) f( y^2 )$ for all $x$, $y \in [ 0, 1 ]$
and $f$ is non-decreasing and non-negative on $[ 0, 1 ]$; see, for
example, \cite[Theorem~2.5]{GKR-lowrank}. By
\cite[Exercise~2.4.4]{niculescu}, the first of these conditions is
satisfied if and only if the function
\[
\Psi_f( x ) := %
x ( f''( x ) f( x ) - f'( x )^2 ) + f( x ) f'( x ) \geq 0
\]
for all $x \in ( 0, 1 )$. A short calculation gives that
\[
\Psi_f( x ) = x^{m + n - 1} \bigl( c_m c_n ( n - m )^2 + %
c_m c_p ( p - m )^2 x^{p - n} + c_n c_p ( p - n )^2 x^{p - m} \bigr),
\]
and so
\[
\Psi_f( x ) \geq 0 \iff c_p \geq %
\frac{-c_m c_n ( n - m )^2}%
{c_m ( p - m )^2 x^{p - m} + c_n ( p - n )^2 x^{p - n}}.
\]
The final term has its infimum on $( 0 , 1 )$ when $x = 1$, so we
obtain the first condition. Next, note that $f$ is non-decreasing on
$[ 0, 1 ]$ if and only if
\begin{align*}
f'( x ) & = m c_m x^{m - 1} + n c_n x^{n - 1} + p c_p x^{p - 1} \geq 0
\quad \forall x \in ( 0, 1 ) \\
 \iff c_p & \geq \frac{-m c_m  x^{m - p} - n c_n  x^{n - p}}{p} %
\quad \forall x \in ( 0, 1 ) \\
\iff c_p & \geq \frac{-m c_m - n c_n}{p}.
\end{align*}
Thus, it suffices to show that
\[
\frac{m c_m + n c_n}{p} \geq %
\frac{c_m c_n ( n - m )^2}{c_m ( p - m )^2 + c_n ( p - n )^2},
\]
but this holds because
\[
c_m c_n ( m ( p - n )^2 + n ( p - m )^2 - p ( n - m )^2 ) = %
c_m c_n ( m + n ) ( p - m ) ( p - n ) \geq 0.
\]
Thus (3) implies that $f$ is non-decreasing on $[0,1]$. In turn, this
implies that $f$ is non-negative on $[ 0, 1 ]$, since
$f( x ) \geq f( 0 ) \geq 0$ for $x \in [ 0, 1 ]$. Hence (3) implies
(1).

The final assertion is an immediate consequence of
Lemma~\ref{Lpos-coeff}.
\end{proof}

\section{Rayleigh quotients}\label{Srayleigh}

Recall from Theorem~\ref{Crayleigh} that Theorem~\ref{Tthreshold} can
be reformulated as an extremal problem, involving the boundedness of
the generalized Rayleigh quotient for
$\sum_{j = 0}^{N - 1} c_j A^{\circ j}$ and $A^{\circ M}$, taken over
all matrices $A \in \bp_N( \overline{D}( 0, \rho ) )$. We now consider
an alternate approach to proving Theorem~\ref{Tthreshold}, by first
considering the analogue of Theorem~\ref{Crayleigh} for a single
matrix. In this case a sharp bound may be obtained as follows.

\begin{proposition}\label{Palgebra}
Fix integers $N \geq 1$ and $M \geq 0$, positive scalars $c_0$,
\ldots, $c_{N - 1} > 0$, and a non-zero matrix $A \in \bp_N( \C )$.
Then, with the notation as in Theorem~\ref{Crayleigh} and
Definition~\ref{Drayleigh},
$\mathcal{K}( A ) \subset \ker A^{\circ M}$, and the corresponding
extreme critical value $\mathcal{C}( \bc; z^M; A )$ is finite:
\[
\mathcal{C}( \bc; z^M; A )^{-1} = %
\min_{\bv \in S^{2 N - 1} \cap \mathcal{K}(A)^\perp} %
\frac{\bv^* \Bigl( \sum_{j = 0}^{N - 1} c_j A^{\circ j} \Bigr) \bv}%
{\bv^* A^{\circ M} \bv},
\]
where $S^{2 N - 1}$ is the unit sphere in $\C^N$.
\end{proposition}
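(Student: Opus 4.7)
The plan is to identify $\mathcal{C}(\bc; z^M; A)$ as the infimum of those $t \in \R$ for which the linear matrix inequality $t B - A^{\circ M} \succeq 0$ holds, where $B := \sum_{j=0}^{N-1} c_j A^{\circ j}$, and then to convert this LMI into a Rayleigh quotient optimisation restricted to $\mathcal{K}(A)^\perp = (\ker B)^\perp$. The substantive step is proving the kernel inclusion $\mathcal{K}(A) \subset \ker A^{\circ M}$, which is exactly the condition that makes any finite $t$ feasible: for $\bv \in \mathcal{K}(A) \setminus \ker A^{\circ M}$ one would have $\bv^*(tB - A^{\circ M})\bv = -\bv^* A^{\circ M}\bv < 0$ for every $t$, so no finite critical value could exist.

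To establish the inclusion I would use that every $A^{\circ j}$ is positive semidefinite by the Schur product theorem and every $c_j > 0$, so that $B\bv = 0$ forces $0 = \bv^* B\bv = \sum_{j=0}^{N-1} c_j \bv^* A^{\circ j}\bv$ and hence $\bv^* A^{\circ j}\bv = 0$, i.e.\ $A^{\circ j}\bv = 0$, for each $0 \leq j \leq N-1$. Lemma~\ref{Lpower_exp} then expresses $A^{\circ M} = \sum_{j=0}^{N-1} D_{M,j}(A) A^{\circ j}$ with $D_{M,j}(A)$ diagonal, so $A^{\circ M}\bv = 0$. This is the place where all of the Schur-polynomial machinery of Section~\ref{Smain} is actually used; the rest of the argument is linear algebra.

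With the inclusion in hand, both $B$ and $A^{\circ M}$ are Hermitian operators annihilating $\mathcal{K}(A)$, hence they preserve the orthogonal decomposition $\C^N = \mathcal{K}(A) \oplus \mathcal{K}(A)^\perp$. Writing $\bv = \bv_1 + \bv_2$ along this splitting gives $\bv^*(tB - A^{\circ M})\bv = \bv_2^*(tB - A^{\circ M})\bv_2$, so the LMI $t B \succeq A^{\circ M}$ is equivalent to the pointwise scalar inequality $t\, \bv_2^* B \bv_2 \geq \bv_2^* A^{\circ M}\bv_2$ for every $\bv_2 \in \mathcal{K}(A)^\perp$. The restriction of $B$ to $\mathcal{K}(A)^\perp$ is positive definite, so $\bv_2^* B \bv_2$ is bounded below by a positive constant on the compact set $S^{2N-1} \cap \mathcal{K}(A)^\perp$; the ratio $\bv_2^* A^{\circ M}\bv_2 / \bv_2^* B \bv_2$ is therefore continuous there and attains its supremum, and this supremum is precisely $\mathcal{C}(\bc; z^M; A)$. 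Since $A \neq 0$ implies $A^{\circ M} \neq 0$, and since $A^{\circ M}$ vanishes on $\mathcal{K}(A)$, the operator $A^{\circ M}$ is non-zero on $\mathcal{K}(A)^\perp$, so the supremum is strictly positive; taking reciprocals (with the convention that $0/0$ is replaced by $+\infty$ in the displayed quotient) delivers the claimed equality with the infimum attained.
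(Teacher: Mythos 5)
Your proof is correct and follows essentially the same route as the paper, which factors the argument through Lemma~\ref{Lalgebra} (the kernel inclusion, proved via Lemma~\ref{Lpower_exp} exactly as you do, though stated for a general Hadamard mask $B\circ{-}$) and Proposition~\ref{Ppos-constant} (the general equivalence of kernel inclusion with existence of the extreme critical value, and the Rayleigh-quotient formula via the same orthogonal splitting $\C^N = \mathcal{K}(A)\oplus\mathcal{K}(A)^\perp$); you have simply inlined these two steps for $C=h_\bc[A]$ and $D=A^{\circ M}$. One small point to patch: Lemma~\ref{Lpower_exp} is stated only for $M\geq N$, so when $0\leq M\leq N-1$ you should instead note that $A^{\circ M}\bv=0$ follows directly from your observation that $A^{\circ j}\bv=0$ for all $0\leq j\leq N-1$, and that the displayed quotient can equal $+\infty$ when $\bv^*A^{\circ M}\bv=0$ (not a $0/0$ form, since the numerator is strictly positive on $S^{2N-1}\cap\mathcal{K}(A)^\perp$).
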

In particular,
\[
\bv^* \biggl( \sum_{j = 0}^{N - 1} c_j A^{\circ j} \biggr) \bv \geq %
\mathcal{C}( \bc; z^M; A )^{-1} \cdot \bv^* A^{\circ M} \bv
\]
for all $\bv \in \C^N$.

Note, moreover, that the minimum in Proposition~\ref{Palgebra} is
attained for every non-zero matrix $A \in \bp_N( \C )$ and integer
$M \geq 0$, and over a compact set that is independent of~$M$.

The proof of Proposition~\ref{Palgebra}, as well as a closed-form
expression for the constant $\mathcal{C}( \bc; z^M; A )$, is
immediate from the following two results.

\begin{proposition}\label{Ppos-constant}
Given $N \geq 1$ and $C$, $D \in \bp_N( \C )$, the following are
equivalent.
\begin{enumerate}
\item If $\bv^* C \bv = 0$ for some $\bv \in \C^N$, then
$\bv^* D \bv = 0$.

\item $\ker C \subset \ker D$.

\item There exists a smallest positive constant $\mathfrak{h}_{C, D}$
such that $\bv^* C \bv \geq \mathfrak{h}_{C, D} \cdot \bv^* D \bv$ for
all $\bv \in \C^N$.
\end{enumerate}

\noindent Moreover, if (1)--(3) hold and $D \neq 0$, then the constant
is computable as an extremal generalized Rayleigh quotient, as
follows:
\begin{equation}\label{Egrquot}
\mathfrak{h}_{C, D}^{-1} = %
\sup_{\bv \not\in \ker D} \frac{\bv^* D \bv}{\bv^* C \bv} = %
\varrho( C^{\dagger / 2} D C^{\dagger / 2} ) = \varrho( X^* C^\dagger X ),
\end{equation}
where $C^{\dagger / 2}$ and $C^\dagger \in \bp_N( \C )$ denote
respectively the square root of the Moore--Penrose inverse
and the Moore--Penrose inverse of $C \in \bp_N( \C )$,
$\varrho( - )$ denotes the spectral radius, and $X$ is any matrix such
that $D = X X^*$.
\end{proposition}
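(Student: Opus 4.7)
The plan is to establish the equivalences in the order \mbox{(1)$\iff$(2)}, \mbox{(3)$\implies$(1)}, \mbox{(2)$\implies$(3)}, and then derive the Rayleigh-quotient formula as a by-product of the last implication. The arguments rest on a single structural fact about a positive semidefinite matrix $T \in \bp_N(\C)$: one has $\bv^* T \bv = \|T^{1/2} \bv\|^2$, so $\ker T = \ker T^{1/2} = \{\bv \in \C^N : \bv^* T \bv = 0\}$. Applied to both $C$ and $D$, this immediately gives (1)$\iff$(2), while (3)$\implies$(1) is trivial.

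The substantive content is (2)$\implies$(3) together with formula~\eqref{Egrquot}. First I would use the orthogonal decomposition $\C^N = \ker C \oplus (\ker C)^\perp$ to reduce the problem to $(\ker C)^\perp$: writing $\bv = \bv_1 + \bv_2$ accordingly, the inclusion $\ker C \subset \ker D$ combined with the Hermiticity of $C$ and $D$ makes all cross-terms vanish, leaving
\[
\bv^* C \bv = \bv_2^* C \bv_2, \qquad \bv^* D \bv = \bv_2^* D \bv_2.
\]
On $(\ker C)^\perp$ the operator $C$ is positive definite, so the substitution $\bw := C^{1/2} \bv_2$ is a bijection of $(\ker C)^\perp$ with itself with inverse $\bv_2 = C^{\dagger/2} \bw$, and it transforms the ratio into
\[
\frac{\bv^* D \bv}{\bv^* C \bv} = \frac{\bw^* \bigl(C^{\dagger/2} D C^{\dagger/2}\bigr) \bw}{\bw^* \bw}.
\]
The right-hand side is the standard Rayleigh quotient of the positive semidefinite matrix $C^{\dagger/2} D C^{\dagger/2}$, hence its supremum over nonzero $\bw$ is attained and equals $\varrho(C^{\dagger/2} D C^{\dagger/2})$. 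This supremum is finite, which yields (3) with $\mathfrak{h}_{C, D}^{-1} = \varrho(C^{\dagger/2} D C^{\dagger/2})$ and simultaneously establishes the first two equalities of~\eqref{Egrquot}.

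The final identification $\varrho(C^{\dagger/2} D C^{\dagger/2}) = \varrho(X^* C^\dagger X)$ follows from the standard fact that, for any matrices $P$ and $Q$ of compatible sizes, $PQ$ and $QP$ share the same nonzero spectrum. Indeed, writing $D = X X^*$ and setting $Y := X^* C^{\dagger/2}$, one has $C^{\dagger/2} D C^{\dagger/2} = Y^* Y$ and $X^* C^\dagger X = Y Y^*$, so their spectral radii coincide.

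The main obstacle I anticipate is purely bookkeeping with the Moore--Penrose inverse: one must verify that $C^{\dagger/2}$ restricts to a bijection of $(\ker C)^\perp$ inverse to $C^{1/2}$, that $\ker C \subset \ker D$ genuinely eliminates the cross-terms in the quadratic form (which uses the Hermiticity of $D$), and that the supremum in~\eqref{Egrquot} may be taken over $\bv \not\in \ker D$ rather than $\bv \not\in \ker C$ (for this last point, any $\bv$ with $\bv^* D \bv > 0$ must have nonzero component in $(\ker C)^\perp$, since otherwise $\bv \in \ker C \subset \ker D$ would force $\bv^* D \bv = 0$). Once these points are in place, the proof is essentially a routine reduction to the positive definite case on an invariant subspace.
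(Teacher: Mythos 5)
Your proposal is correct and follows essentially the same route as the paper: decompose $\C^N = \ker C \oplus (\ker C)^\perp$, observe $\ker C \subset \ker D$ kills cross-terms, substitute $\bw = C^{1/2}\bv_2$ to land on the Rayleigh quotient of $C^{\dagger/2} D C^{\dagger/2}$, and finish with $\varrho(PQ) = \varrho(QP)$. One small point to spell out (which the paper does): after the change of variables you only range over $\bw \in (\ker C)^\perp \setminus\{0\}$, so to conclude this restricted supremum equals $\varrho(C^{\dagger/2} D C^{\dagger/2})$ you should note that $\ker C \subset \ker(C^{\dagger/2} D C^{\dagger/2})$, whence any eigenvector with nonzero eigenvalue already lies in $(\ker C)^\perp$ and the restricted and unrestricted suprema coincide.
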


\begin{proof}
Clearly (3) implies (1). That (1) is equivalent to (2) is also
immediate, given the following reasoning:
\begin{alignat*}{2}
\bv^* C \bv = 0 \quad & \iff \quad
\bv^* C^{1 / 2} \cdot C^{1 / 2} \bv = 0 \quad && \\
 & \iff \quad C^{1 / 2} \bv = 0 \quad && \implies \quad C \bv = 0 %
\implies \quad \bv^* C \bv = 0.
\end{alignat*}
We now show that (1) implies (3). Given a matrix $C$, denote its
kernel and the orthogonal complement of its kernel by $K_C$ and
$K_C^\perp$, respectively. If $\bv \in K_D$ then any choice of
constant $\mathfrak{h}_{C, D}$ would suffice in (3), so we may
restrict ourselves to obtaining such a constant when
$\bv \not\in K_D$. Write $\bv = \bv_C + \bv_C^\perp$, with
$\bv_C \in K_C$ and $\bv_C^\perp \in K_C^\perp$, and note that
$\bv_C^\perp \neq 0$. Now compute:
\[
\frac{\bv^* C \bv}{\bv^* D \bv} = %
\frac{( \bv_C^\perp )^* C \bv_C^\perp}%
{( \bv_C^\perp )^* D \bv_C^\perp} = %
\frac{( \bv_C^\perp / \| \bv_C^\perp \| )^* C %
( \bv_C^\perp / \| \bv_C^\perp \| )}%
{( \bv_C^\perp / \| \bv_C^\perp \| )^* D %
( \bv_C^\perp / \| \bv_C^\perp \| )}.
\]
It follows by setting $\bw := \bv_C^\perp / \| \bv_C^\perp \|$ that
\begin{equation}\label{Epos-constant}
\mathfrak{h}_{C, D} := %
\inf_{\bv \not\in K_D} \frac{\bv^* C \bv}{\bv^* D \bv} = %
\min_{\bw \in S^{2 N - 1} \cap K_C^\perp} %
\frac{\bw^* C \bw}{\bw^* D \bw},
\end{equation}
where $S^{2 N - 1}$ denotes the unit sphere in $\C^N$. The right-hand
side is the minimizer of a continuous and positive function over a
compact set, hence equals a positive real number. Thus
$\mathfrak{h}_{C, D} > 0$, as desired.

We now establish Equation~\eqref{Egrquot}, by following the theory of
the Kronecker normal form for a matrix pencil, as developed
in~\cite[Chapter~X, \S6]{Gantmacher_Vol2}. Because both sides of the
last equality in Equation~\eqref{Egrquot} remain unchanged under a
unitary change of basis, we may assume that $C$ is diagonal, say
$C = \diag( \lambda_1, \ldots, \lambda_r, 0, \ldots, 0)$ with
$\lambda_1 \geq \cdots \geq \lambda_r > 0$. Pre-multiplying by
$C^{1 / 2} = %
\diag( \lambda_1^{1 / 2}, \ldots, \lambda_r^{1 / 2}, 0, \ldots, 0)$
preserves $K_C^\perp \setminus \{ 0 \}$, and therefore, setting
$\bw = C^{1 / 2} \bv_C^\perp$,
\begin{align*}
\mathfrak{h}_{C, D}^{-1} & = %
\sup_{\bv \not\in \ker D} \frac{\bv^* D \bv}{\bv^* C \bv} = %
\sup_{\bv_C^\perp \in K_C^\perp \setminus \{ 0 \}} %
\frac{( \bv_C^\perp )^* D \bv_C^\perp}%
{( \bv_C^\perp )^* C \bv_C^\perp} = %
\sup_{\bw \in K_C^\perp \setminus \{ 0 \}} %
\frac{\bw^* C^{\dagger/2} D C^{\dagger/2} \bw}{\bw^* \bw} \\
 & = \sup_{\bw \in S^{2N-1} \cap K_C^\perp} %
\bw^* C^{\dagger / 2} D C^{\dagger / 2} \bw \\
 & \leq \sup_{\bw \in S^{2 N - 1}} %
\bw^* C^{\dagger / 2} D C^{\dagger / 2} \bw = %
\varrho( C^{\dagger / 2} D C^{\dagger / 2} ).
\end{align*}
Moreover, any non-zero eigenvector of
$C^{\dagger / 2} D C^{\dagger / 2}$ with non-zero eigenvalue has its
last $n - r$ coordinates all equal to zero, since
$C^{\dagger / 2} = %
\diag( \lambda_1^{-1 / 2}, \ldots, \lambda_r^{-1 / 2}, 0, \ldots, 0)$.
It follows that the unit-length eigenvectors corresponding to the
eigenvalue $\varrho( C^{\dagger / 2} D C^{\dagger / 2})$ lie in
$K_C^\perp$, thereby proving the result. 
The last equality in Equation~\eqref{Egrquot} is an immediate consequence
of the tracial property of the spectral radius.
\end{proof}

\begin{lemma}\label{Lalgebra}
Fix integers $m \geq N \geq 1$ and $M \geq 0$, and matrices $A$,
$B \in \bp_N( \C )$. The matrices
$h( A, B ) := B \circ \sum_{j = 0}^{m - 1} A^{\circ j}$ and
$g( A, B ) := B \circ A^{\circ M}$ are such that
$\ker h( A, B ) \subset \ker g( A, B )$.
\end{lemma}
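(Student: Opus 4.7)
The plan is to reduce the lemma to two observations: that each Hadamard product of positive semidefinite matrices is again positive semidefinite (the Schur product theorem), and that every Hadamard power $A^{\circ M}$ with $M \geq N$ is a diagonal combination of the lower Hadamard powers $A^{\circ 0}, \ldots, A^{\circ (N-1)}$ (Lemma~\ref{Lpower_exp}).

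First, fix $\bv \in \ker h(A,B)$. Since $h(A,B) = \sum_{j=0}^{m-1} B \circ A^{\circ j}$ is itself positive semidefinite, the condition $h(A,B)\bv = 0$ is equivalent to $\bv^* h(A,B) \bv = 0$. By the Schur product theorem, each summand $B \circ A^{\circ j}$ is positive semidefinite, so writing $\bv^* h(A,B) \bv$ as the sum of the non-negative quantities $\bv^*(B \circ A^{\circ j})\bv$, each must vanish individually. Using once more that each $B \circ A^{\circ j}$ is positive semidefinite, this forces $(B \circ A^{\circ j})\bv = 0$ for every $j = 0,1,\ldots,m-1$, and in particular for $j = 0,1,\ldots,N-1$ since $m \geq N$.

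Next I split on whether $M < N$ or $M \geq N$. If $M \leq N-1$, the desired conclusion $(B \circ A^{\circ M})\bv = 0$ is already in the list obtained above. If $M \geq N$, I invoke Lemma~\ref{Lpower_exp}, which gives diagonal matrices $D_{M,j}(A)$ with
\[
A^{\circ M} = \sum_{j=0}^{N-1} D_{M,j}(A)\, A^{\circ j}.
\]
Since Hadamard product with $B$ commutes with left-multiplication by a diagonal matrix, this identity yields
\[
B \circ A^{\circ M} = \sum_{j=0}^{N-1} D_{M,j}(A)\,(B \circ A^{\circ j}).
\]
Applying both sides to $\bv$ and using $(B \circ A^{\circ j})\bv = 0$ for $0 \leq j \leq N-1$, we obtain $g(A,B)\bv = (B \circ A^{\circ M})\bv = 0$, hence $\bv \in \ker g(A,B)$.

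There is no real obstacle here; the only subtle point is the diagonal-times-Hadamard commutation, which is an entrywise check, and the observation that the two equivalent characterizations of $\ker P$ for $P \succeq 0$ (as the null space and as the set where the quadratic form vanishes) let us upgrade $\bv^* h(A,B) \bv = 0$ termwise into honest kernel membership for each individual $B \circ A^{\circ j}$. All the heavy lifting has already been done in Lemma~\ref{Lpower_exp}.
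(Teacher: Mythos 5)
Your proof is correct and follows essentially the same route as the paper: use positive semidefiniteness of each $B \circ A^{\circ j}$ (Schur product theorem) to split $\bv^* h(A,B)\bv = 0$ into vanishing of each term and hence $(B \circ A^{\circ j})\bv = 0$, then invoke Lemma~\ref{Lpower_exp} together with the identity $D(C\circ B) = (DC)\circ B$ for diagonal $D$ to handle $M \geq N$. The only cosmetic difference is that the paper dispatches the easy case as $M < m$ whereas you use $M \leq N-1$; since $m \geq N$ both cover the complement of $M \geq N$ and the logic is identical.
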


\begin{proof}
Suppose $\bv \in \ker h( A, B )$, so that $\bv^* h( A, B ) \bv = 0$.
Since $B \circ A^{\circ j} \in \bp_N( \C )$, it follows that
$\bv^* ( B \circ A^{\circ j} ) \bv = 0$ and
$( B \circ A^{\circ j} ) \bv = 0$ for $0 \leq j \leq m - 1$; in
particular, the lemma is true if $M < m$.
If, instead, $M \geq N$, then apply Lemma~\ref{Lpower_exp}, together
with the fact that $D ( C \circ B ) = ( D C ) \circ B$ if $B$, $C$
and $D$ are square matrices, with $D$ diagonal, to see that
\begin{align*}
h( A, B ) \bv = 0 \quad & \implies \quad %
( A^{\circ j} \circ B ) \bv = 0 \quad ( 0 \leq j \leq N - 1 ) \\
 & \implies \quad %
\sum_{j = 0}^{N - 1} D_{M, j}( A ) ( A^{\circ j} \circ B ) \bv = 0 %
\quad \implies \quad ( A^{\circ M} \circ B ) \bv = 0.
\qedhere
\end{align*}
\end{proof}

\begin{remark}
Note that Theorem~\ref{Crayleigh} does not hold in much greater
generality, i.e.,~for more general functions $g$, $h$ than the
polynomials used in Lemma~\ref{Lalgebra}. For instance,
fix $\delta > 0$ and consider continuous functions $g$,
$h : [ 0, \delta ) \to [ 0, \infty )$ such that $g( 0 ) = h( 0 ) = 0$
and $g( x )$, $h( x ) > 0$ on $( 0, \delta )$. Given $N \geq 2$ and
$t \in \R$, let the $N \times N$ matrix
$A( t ) := \diag( t, 1, \ldots, 1 )$. There need not exist a universal
constant $\mathfrak{h} > 0$ such that
\begin{equation}\label{eqn:noconst}
\bv^T h[ A( t ) ] \bv \geq \mathfrak{h} \cdot \bv^T g[ A( t ) ] \bv %
\qquad \forall t \in ( 0, \delta ), \ \bv \in \R^N.
\end{equation}
To see this, note that $A( t )$, $h[ A( t ) ]$ and $g[ A( t ) ]$ are
positive definite for $t \in ( 0, \delta )$ and singular at
$t = 0$. Thus, by standard facts about the generalized Rayleigh
quotient, if \eqref{eqn:noconst} holds, then
\begin{align*}
\mathfrak{h} \leq \inf_{t \in ( 0, \delta )} %
\inf_{\bv \in \R^N \setminus \{ 0 \}} %
\frac{\bv^T h[ A( t ) ] \bv}{\bv^T g[ A( t ) ] \bv} & = %
\inf_{t \in ( 0, \delta )} %
\lambda_{\min}( g[ A( t )]^{-1 / 2} h[ A( t ) ] g[ A( t ) ]^{-1 / 2} )
\\
 & = \inf_{t \in ( 0, \delta )} \min\{ 1, h( t ) / g( t ) \},
\end{align*}
where $\lambda_{\min}( C )$ denotes the smallest eigenvalue of the
positive semidefinite matrix $C$. If $h( t ) = t^a g( t )$ for some
$a > 0$, then this infimum, and so $\mathfrak{h}$, equals zero. Note,
however, that such a case does not occur in Theorem~\ref{Crayleigh}.
\end{remark}

We now observe that Proposition~\ref{Palgebra} and the explicit
formula for the extreme critical value of the matrix pencil, as given
in Equation~\eqref{Egrquot}, allow us to provide a closed-form
expression for the constant $\mathcal{C}( \bc; z^M; A )$ for rank-one
matrices.

\begin{corollary}\label{Cgrquot}
Given integers $N \geq 1$ and $M \geq 0$, positive scalars $c_0$,
\ldots, $c_{N - 1} > 0$, and a matrix
$A = \bu \bu^* \in \bp_N^1(\C) \setminus \{ {\bf 0}_{N \times N} \}$, the
identity
\[
\mathcal{C}( \bc; z^M; A ) = %
( \bu^{\circ M} )^* \biggl( %
\sum_{j = 0}^{N - 1} c_j \bu^{\circ j} ( \bu^{\circ j} )^* %
\biggr)^\dagger \bu^{\circ M}
\]
holds.
In particular, given $\rho > 0$,
\[
\mathcal{C}( \bc; z^M; \rho {\bf 1}_{N \times N} ) = %
\rho^M \biggl( \sum_{j = 0}^{N - 1} c_j \rho^j \biggr)^{-1} \leq %
\mathcal{C}( \bc; z^M; N, \rho ),
\]
with equality if and only if $N = 1$.
\end{corollary}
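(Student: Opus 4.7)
My plan is to read off the first identity as a direct specialization of the closed-form Rayleigh quotient in Proposition~\ref{Ppos-constant} to the rank-one case, and then to derive the ``in particular'' statement by an explicit pseudoinverse computation at $A = \rho\one{N}$ followed by an elementary Cauchy--Schwarz estimate.

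For the identity, I set $C := \sum_{j=0}^{N-1} c_j A^{\circ j}$ and $D := A^{\circ M}$, so that $\mathcal{C}(\bc; z^M; A) = \mathfrak{h}_{C,D}^{-1}$ by Proposition~\ref{Palgebra}. For the rank-one matrix $A = \bu\bu^*$, each $A^{\circ j}$ factors as $\bu^{\circ j}(\bu^{\circ j})^*$, so in particular $D = X X^*$ with $X := \bu^{\circ M}$ a single column vector. The right-most expression in~\eqref{Egrquot} then reduces to $\mathfrak{h}_{C,D}^{-1} = \varrho(X^* C^\dagger X)$; since $X^* C^\dagger X$ is a $1 \times 1$ non-negative scalar its spectral radius is itself, which gives the first identity verbatim.

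For the specialization, substituting $\bu = \sqrt{\rho}\,\mathbf{1}$ (with $\mathbf{1}$ the all-ones column in $\C^N$) yields $\bu\bu^* = \rho\one{N}$, $\bu^{\circ j} = \rho^{j/2}\mathbf{1}$, and $C = \alpha\,\mathbf{1}\mathbf{1}^*$ where $\alpha := \sum_{j=0}^{N-1} c_j\rho^j$. Since $\mathbf{1}\mathbf{1}^*$ has its only nonzero eigenvalue $N$ on $\mathrm{span}(\mathbf{1})$, its Moore--Penrose pseudoinverse is $N^{-2}\mathbf{1}\mathbf{1}^*$, so plugging back into the identity of the previous paragraph immediately gives $\mathcal{C}(\bc; z^M; \rho\one{N}) = \rho^M/\alpha$.

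The bound $\mathcal{C}(\bc; z^M; \rho\one{N}) \leq \mathcal{C}(\bc; z^M; N, \rho)$ is then trivial, since $\rho\one{N} \in \bp_N(\overline{D}(0,\rho))$ and $\mathcal{C}$ is non-increasing in its last (set) argument. For the equality/strictness dichotomy I split on $M$: when $M \geq N$, Cauchy--Schwarz applied to the pairs $\sqrt{c_j\rho^j}$ and $\binom{M}{j}\binom{M-j-1}{N-j-1}/\sqrt{c_j\rho^j}$ for $0\leq j\leq N-1$ produces
\[
\Bigl(\sum_{j=0}^{N-1}\binom{M}{j}\binom{M-j-1}{N-j-1}\Bigr)^2 \leq \alpha\,\rho^{-M}\,\mathcal{C}(\bc; z^M; N, \rho),
\]
whose left-hand sum equals $1$ exactly when $N=1$ (giving equality throughout) and strictly exceeds $1$ for $N\geq 2$ (the $j=0$ term alone is $\binom{M-1}{N-1}\geq 1$, and further summands are strictly positive); when $M < N$, Remark~\ref{Rreferee} collapses $\mathcal{C}(\bc; z^M; N, \rho)$ to $c_M^{-1}$, and the comparison $\alpha \geq c_M\rho^M$ is transparent, with strict inequality for $N\geq 2$ because $\alpha$ contains an additional strictly positive summand (e.g.~$c_0$ if $M\neq 0$, or $c_1\rho$ if $M=0$). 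No step is a genuine obstacle; the only real bookkeeping is the case split between $M<N$ and $M\geq N$.
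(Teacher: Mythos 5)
Your proof is correct, and the first two steps (reading off the identity from Equation~\eqref{Egrquot} in the rank-one case and then computing the pseudoinverse of $\alpha\mathbf{1}\mathbf{1}^*$) follow essentially the same route as the paper; you use the term $\varrho(X^*C^\dagger X)$ from \eqref{Egrquot} whereas the paper uses $\varrho(C^{\dagger/2}DC^{\dagger/2}) = \bv^*\bv$, but these are the same chain of equalities.

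For the final inequality your treatment genuinely differs and is arguably more careful. The paper expands the product $\mathcal{C}(\bc;z^M;N,\rho)\cdot\sum_j c_j\rho^j$ as a double sum, keeps only the diagonal terms to obtain $\sum_j\binom{M}{j}^2\binom{M-j-1}{N-j-1}^2\rho^M$, and then asserts this strictly exceeds $\rho^M$ when $N>1$. But if $M<N$, that middle quantity equals exactly $\rho^M$ (only the $j=M$ summand survives, and it is $1$), so the paper's second strict inequality needs to be replaced by $\geq$ there; the chain still works because the off-diagonal cross terms make the first inequality strict. By splitting on $M<N$ versus $M\geq N$ — handling $M<N$ via Remark~\ref{Rreferee} and $M\geq N$ via Cauchy--Schwarz with the observation that $\sum_j\binom{M}{j}\binom{M-j-1}{N-j-1}>1$ for $N\geq 2$ — you avoid this issue cleanly. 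Both routes are valid; yours trades the paper's one-line product expansion for a case split plus Cauchy--Schwarz, and in exchange closes the small gap at $M<N$.
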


\begin{proof}
Applying Proposition~\ref{Palgebra} and Equation~\eqref{Egrquot}
with $C = h_\bc[ A ] := \sum_{j = 0}^{N - 1} c_j A^{\circ j}$ and
$D = A^{\circ M} = \bu^{\circ M} ( \bu^{\circ M} )^*$, we obtain
\[
\mathcal{C}( \bc; z^M; A ) = \varrho( \bv \bv^* ), \quad
\text{where} \quad \bv = h_\bc[ \bu \bu^* ]^{\dagger / 2} \bu^{\circ M}.
\]
Now, it is well known that the unique non-zero eigenvalue of
$\bv \bv^*$ equals
\[
\bv^* \bv = (\bu^{\circ M})^* h_\bc[ A ]^\dagger \bu^{\circ M} = %
( \bu^{\circ M} )^* \biggl( %
\sum_{j = 0}^{N - 1} c_j \bu^{\circ j} ( \bu^{\circ j} )^* %
\biggr)^\dagger \bu^{\circ M},
\]
which shows the first assertion. For the next, set
$\bu = \sqrt{\rho} ( 1, \ldots, 1 )^*$, and use
the fact that
$( \alpha \bu \bu^* )^\dagger = %
\alpha^{-1} ( \bu^* \bu )^{-2} (\bu \bu^* )$ for non-zero $\alpha$
and $\bu$, to obtain that
\[
\mathcal{C}( \bc; z^M; \rho {\bf 1}_{N \times N} ) = %
\rho^M \biggl( \sum_{j = 0}^{N - 1} c_j \rho^j \biggr)^{-1}.
\]

It remains to show the last inequality. When $N = 1$,
\[
\mathcal{C}( \bc; z^M; N, \rho ) \sum_{j = 0}^{N - 1} c_j \rho^j = %
\frac{\rho^M}{c_0} \cdot c_0 = \rho^M.
\]
If instead $N > 1$, then
\[
\mathcal{C}( \bc; z^M; N, \rho ) \sum_{j = 0}^{N - 1} c_j \rho^j > %
\sum_{j = 0}^{N - 1} \binom{M}{j}^2 \binom{M - j - 1}{N - j - 1}^2 %
\rho^M > \rho^M.
\qedhere
\]
\end{proof}

\begin{remark}
If $A = \bu \bu^*$, with $\bu$ having pairwise-distinct entries, then
$h_\bc[ A ]$ is the sum of $N$ rank-one matrices with linearly independent
column spaces, and hence is non-singular. In particular, it is
possible to write $\mathcal{C}( \bc; z^M; A )$ for such $A$ in an
alternate fashion: if $h_\bc( z ) = \sum_{j = 0}^{N - 1} c_j z^j$, where
$c_0$, \ldots, $c_{N - 1} > 0$ then
\[
\mathcal{C}( \bc; z^M; A ) = %
( \bu^{\circ M} )^* h_\bc[ \bu \bu^* ]^{-1} \bu^{\circ M} = %
1 - \det h_\bc[ \bu \bu^* ]^{-1} %
\det \begin{pmatrix} h_\bc[ \bu \bu^*] & \bu^{\circ M} \\
 (\bu^{\circ M})^* & 1 \end{pmatrix}.
\]
\end{remark}

For the optimization-oriented reader, we now restate the main result
as an extremal problem that follows immediately from
Theorem~\ref{Crayleigh} and Proposition~\ref{Palgebra}:
\begin{align}\label{Erephrase}
& \inf_{A \in \bp_N^1( ( 0, \rho ) )} %
\min_{\bv \in S^{2 N - 1} \cap \mathcal{K}( A )^\perp} %
\frac{\bv^* \Bigl( \sum_{j = 0}^{N - 1} c_j A^{\circ j} \Bigr) \bv}%
{\bv^* A^{\circ M} \bv} \notag \\
 & = \inf_{A \in \bp_N( \overline{D}( 0, \rho ) )} %
\min_{\bv \in S^{2 N - 1} \cap \mathcal{K}( A )^\perp} %
\frac{\bv^* \Bigl( \sum_{j = 0}^{N - 1} c_j A^{\circ j} \Bigr) \bv}%
{\bv^* A^{\circ M} \bv} \notag \\
 & = \biggl( \sum_{j = 0}^{N - 1} \binom{M}{j}^2 %
\binom{M - j - 1}{N - j - 1}^2 \frac{\rho^{M - j}}{c_j} \biggr)^{-1},
\end{align}
or, equivalently,
\begin{equation}\label{Erephrase2}
\sup_{A \in \bp_N^1( ( 0, \rho ) )} \mathcal{C}( \bc; z^M; A ) = %
\sup_{A \in \bp_N( \overline{D}( 0, \rho ) )} %
\mathcal{C}( \bc; z^M; A ) = \mathcal{C}( \bc; z^M; N, \rho ),
\end{equation}
where
$\mathcal{C}( \bc; z^M; A ) = %
\varrho( h_\bc[ A ]^{\dagger / 2} A^{\circ M} h_\bc[ A ]^{\dagger / 2} )$,
with $h_\bc( z ) = \sum_{j = 0}^{N - 1} c_j z^j$.

\begin{remark}\label{discts}
Note from the previous line that
$A \mapsto \mathcal{C}( \bc; z^M; A )$ is continuous on the subset of
the cone $\bp_N( \C )$ where $\det h_\bc[ A ] \neq 0$. The obstacle to
establishing~\eqref{Erephrase} and~\eqref{Erephrase2}, and so
proving Theorems~\ref{Tthreshold} and~\ref{Crayleigh}, resides in
the fact that this function is not continuous on the whole of
$\bp_N( \C )$ for $N > 1$. In particular, it is not continuous at the
matrix $A = \rho \one{N} \in \bp_N^1( \overline{D}( 0, \rho ) )$, as
shown in the calculations for Equation~\eqref{Elowerbound} and
Corollary~\ref{Cgrquot} above. However, these calculations also reveal
that the sharp constant $\mathcal{C}( \bc; z^M; N, \rho )$ is obtained
by taking the supremum of the Rayleigh quotient over the one-parameter
family of rank-one matrices
\[
\{ \rho \bu( t ) \bu( t )^T : \bu( t ) := %
( 1 - t, \ldots, 1 - N t )^T, \ t \in ( 0, 1 / N ) \}.
\]
\end{remark}

\section{The simultaneous kernels}\label{Skernels}

Prompted by the variational approach of the previous section, a
description of the kernel
$\mathcal{K}( A ) = %
\ker( c_0 \one{N} + c_1 A + \cdots + c_{N - 1} A^{\circ ( N - 1 )} )$
for any $A \in \bp_N( \C )$ is in order. As we prove below, this
kernel does not depend on the choice of scalars $c_j > 0$, and
coincides with the simultaneous kernel
\[
\bigcap_{n \geq 0} \ker A^{\circ n}
\] 
of the Hadamard powers of $A$. A refined structure of the matrix $A$,
based on an analysis of the kernels of iterated Hadamard powers, is
isolated in the first part of this section.

\subsection{Stratifications of the cone of positive semidefinite
matrices}

We introduce a novel family of stratifications of the cone
$\bp_N( \C )$, that are induced by partitions of the set
$\{ 1, \ldots, N \}$. Each stratification is subjacent to a block
decomposition of a positive semidefinite $N \times N$ matrix, with
diagonal blocks having rank one.  In addition, the blocks exhibit a
remarkable homogeneity with respect to subgroups of the group
$\C^\times$, the multiplicative group of non-zero complex numbers.

\begin{theorem}\label{Tgroup}
Fix a subgroup $G \subset \C^\times$, an integer $N \geq 1$,
and a non-zero matrix $A \in \bp_N( \C )$.
\begin{enumerate}
\item Suppose $\{ I_1, \ldots, I_k \}$ is a partition of
$\{ 1, \ldots, N \}$ satisfying the following two conditions.
\begin{enumerate}
\item Each diagonal block $A_{I_j}$ of $A$ is a submatrix having rank
at most one, and $A_{I_j} = \bu_j \bu_j^*$ for a unique
$\bu_j \in \C^{| I_j |}$ with first entry
$\bu_{j, 1} \in [ 0, \infty )$.

\item The entries of each diagonal block $A_{I_j}$ lie in a single
$G$-orbit.
\end{enumerate}
Then there exists a unique matrix $C = ( c_{i j})_{i,j = 1}^k$ such
that $c_{i j} = 0$ unless $\bu_i \neq 0$ and $\bu_j \neq 0$, and $A$
is a block matrix with
\[
A_{I_i \times I_j} = c_{i j} \bu_i \bu_j^* \qquad %
( 1 \leq i, j \leq k ).
\]
Moreover, the entries of each off-diagonal block of $A$ also
lie in a single $G$-orbit. Furthermore,
the matrix $C \in \bp_k( \overline{D}(0,1))$, and
the matrices $A$ and $C$ have equal rank.

\item If the following condition (c) is assumed as well as (a) and (b),
then such a partition $\{ I_1, \ldots, I_k \}$ exists and is unique up
to relabelling of the indices.
\begin{enumerate}
\item[(c)] The diagonal blocks of $A$ have maximal size, i.e., each
diagonal block is not contained in a larger diagonal block that has
rank one.
\end{enumerate}

\item Suppose (a)--(c) hold and $G = \C^\times$. Then the off-diagonal
entries of $C$ lie in the open disc $D( 0, 1 )$.
\end{enumerate}
\end{theorem}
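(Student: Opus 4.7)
The plan is to handle parts~(1), (2), and~(3) in sequence, relying throughout on the block structure imposed by positive semidefiniteness.

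For part~(1), the key input is the classical fact that in a positive semidefinite block matrix $\bigl(\begin{smallmatrix} X & Y \\ Y^* & Z \end{smallmatrix}\bigr)$, the column spaces of $Y$ and $Y^*$ are contained in those of $X$ and $Z$ respectively. Taking $X = \bu_i\bu_i^*$ and $Z = \bu_j\bu_j^*$ of rank one, this forces $A_{I_i \times I_j} = c_{ij} \bu_i \bu_j^*$ for a unique scalar $c_{ij}$ when both $\bu_i, \bu_j$ are nonzero, while the same reasoning gives $A_{I_i \times I_j} = 0$ otherwise, so one sets $c_{ij} := 0$ in that case. To verify the single-$G$-orbit property for an off-diagonal block, I would observe that pairwise ratios of entries of $c_{ij} \bu_i \bu_j^*$ factor as $(\bu_{i,r}/\bu_{i,r'}) \cdot \overline{(\bu_{j,s}/\bu_{j,s'})}$; noting that a zero first entry $\bu_{i,1}$ would force $\bu_i = 0$ (otherwise the diagonal block would have both zero and nonzero entries, breaking the single-orbit condition), one has $\bu_{i,1} > 0$ whenever $\bu_i \neq 0$, and hypothesis~(b) applied to $A_{I_i}$ then implies that both $\bu_{i,r}/\bu_{i,1}$ and $\overline{\bu_{i,r}}/\bu_{i,1}$ lie in $G$, and similarly for $\bu_j$. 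For the remaining claims I form the $N \times k$ matrix $U$ whose $i$th column is supported on $I_i$ and equals $\bu_i$ there, so that $A = U C U^*$; positive semidefiniteness of $C$ follows from that of $A$, the identity $\operatorname{rank}(A) = \operatorname{rank}(C)$ holds because $U$ is injective on the subspace spanned by indices with $\bu_i \neq 0$, and $|c_{ij}| \leq 1$ is read off from the $2 \times 2$ matrix $\bigl(\begin{smallmatrix} 1 & c_{ij} \\ \bar c_{ij} & 1 \end{smallmatrix}\bigr)$ extracted from $A_{I_i \cup I_j}$ via this factorization.

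For part~(2), I introduce on $\{1, \ldots, N\}$ the relation $i \sim j$ if and only if $A_{\{i,j\}}$ has rank at most one and its entries lie in a single $G$-orbit. The delicate step is transitivity. Assuming $a_{jj} > 0$ and setting $\alpha := a_{ij}/a_{jj}$, $\beta := a_{jk}/a_{jj}$, the hypothesis forces $\alpha, \bar\alpha, \beta, \bar\beta \in G$; applying the equality case of Cauchy--Schwarz to a factorization $A = BB^*$ with rows $B_p$ yields $B_i = \alpha B_j$, hence $a_{ik} = \alpha a_{jk} = \alpha\beta\, a_{jj}$. Closure of $G$ under products then gives the single-orbit property on $A_{\{i,k\}}$, and the degenerate case $a_{jj} = 0$ forces the whole $3 \times 3$ block $A_{\{i,j,k\}}$ to vanish, making transitivity trivial. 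The equivalence classes then form the required partition: each diagonal block has rank $\leq 1$ by proportionality of the rows $B_p$, condition~(b) is built in, and maximality~(c) follows because for any $t$ outside a class $I$, the failure of $t \sim r$ for $r \in I$ prevents enlarging the block. Uniqueness up to relabelling is then immediate: any partition satisfying (a)--(c) refines this equivalence and, by maximality, must coincide with it.

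For part~(3), I argue by contradiction. Suppose $|c_{ij}| = 1$ for some $i \neq j$ with $\bu_i, \bu_j \neq 0$. Factoring $\bigl(\begin{smallmatrix} 1 & c_{ij} \\ \bar c_{ij} & 1 \end{smallmatrix}\bigr)$ as the rank-one outer product $(1, \bar c_{ij})^T (1, c_{ij})$ and substituting into the block factorization of part~(1) gives $A_{I_i \cup I_j} = \bw \bw^*$, where $\bw = (\bu_i^T,\, \bar c_{ij} \bu_j^T)^T$. When $G = \C^\times$ the only orbits in $\C$ are $\{0\}$ and $\C^\times$, so condition~(b) forces every nonzero $\bu_l$ to have all entries nonzero; hence $\bw$ has all entries nonzero, and the entries of $\bw\bw^*$ all belong to $\C^\times$, a single orbit. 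Thus $I_i \cup I_j$ is a strictly larger rank-one diagonal block satisfying (a) and (b), contradicting the maximality~(c). The main obstacle throughout is the transitivity step in part~(2), where one must carefully track the interaction between $G$ and its image under conjugation while propagating the $2 \times 2$ proportionality to full-row proportionality via the PSD structure.
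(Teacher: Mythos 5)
Your proof is correct, and it takes a genuinely different path from the paper's in two of the three parts.

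In part~(1) the paper argues entirely through $3 \times 3$ determinants: picking $l, l' \in I_i$ and $m \in I_j$, it shows $\det A_{\{l, l', m\}} = -a\,|c - b\bar g|^2 \geq 0$, forcing the $2 \times 2$ minor to be singular, and then propagates this to establish that every off-diagonal block is $c_{ij}\bu_i\bu_j^*$. You instead invoke the column-space containment $\operatorname{range}(Y) \subseteq \operatorname{range}(X)$, $\operatorname{range}(Y^*) \subseteq \operatorname{range}(Z)$ in a PSD block matrix; with $X = \bu_i\bu_i^*$, $Z = \bu_j\bu_j^*$ rank one, this gives the block form in one stroke. Your factorization $A = UCU^*$ then packages the positivity of $C$, the rank equality, and the bound $|c_{ij}| \leq 1$ into one picture, whereas the paper handles these by a compression argument ($\bw^*A\bw = \bv^*C\bv$) and a separate linear-map argument $\pi : \C^J \to \C^I$. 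Your route is cleaner; the paper's determinant computation is more elementary in the sense of not invoking the range-inclusion lemma.

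In part~(2) the paper performs induction on $N$ with a three-way case split on how the new index $N+1$ relates to the existing blocks, ruling out ``Case~3'' by another $3 \times 3$ determinant. You instead define $i \sim j$ iff $A_{\{i,j\}}$ has rank at most one and lies in a single $G$-orbit, and verify transitivity via the equality case of Cauchy--Schwarz applied to a factorization $A = BB^*$: from $B_i = \alpha B_j$ and $B_k = \bar\beta B_j$ one gets $a_{ik} = \alpha\beta\,a_{jj}$, closure of $G$ under products and conjugation does the rest, and the degenerate $a_{jj} = 0$ case collapses the whole $3 \times 3$ block to zero. This is a nice non-inductive reformulation; the equivalence classes are then automatically maximal, and uniqueness up to relabelling is immediate. (One small point worth making explicit when writing it up: in the transitivity step, the rank-$\leq 1$ condition for $A_{\{i,k\}}$ follows because $a_{ii}a_{kk} = |\alpha\beta|^2 a_{jj}^2 = |a_{ik}|^2$; you verify the orbit condition but should also state this.) Part~(3) is essentially the paper's argument: the rank-one factorization of the off-diagonal $2 \times 2$ block gives $A_{I_i \cup I_j} = \bw\bw^*$, and for $G = \C^\times$ the single-orbit condition is automatic since all entries of $\bw$ are nonzero.
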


In particular, given any non-zero matrix $A \in \bp_N( \C )$, there
exists a unique partition $\{ I_1, \ldots, I_k \}$ of
$\{ 1, \ldots, N \}$ having minimal size~$k$, unique vectors $\bu_j \in
\C^{| I_j |}$ with $\bu_{j, 1} \in [ 0, \infty )$, and a unique matrix $C
\in \bp_k( \overline{D}( 0, 1 ) )$, such that $A$ is a block matrix with
$A_{I_i \times I_j} = c_{ij} \bu_i \bu_j^*$ whenever
$1 \leq i, j \leq k$, and $A$ has rank at most $k$.

\begin{proof}\hfill
\begin{enumerate}
\item Suppose $1 \leq i \neq j \leq k$, and
$1 \leq l < l' < m \leq N$, with $l$, $l' \in I_i$ and $m \in I_j$;
the submatrix
\[
B := A_{\{ l, l', m \}} = \begin{pmatrix}
a & a g & b \\
a \overline{g} & a | g |^2 & c \\
\overline{b} & \overline{c} & d
\end{pmatrix},
\]
where $a$, $d \geq 0$, $g \in G$, and $b$, $c \in \C$.

We claim that $c \in b \cdot G$, and that the minor
$\begin{pmatrix} a & b \\ a \overline{g} & c \end{pmatrix}$
is singular. Now,
\[
0 \leq \det B = %
-a ( | c |^2 + | b |^2 | g |^2 - 2 \Re( \overline{b} c g ) ) = %
-a | c - b \overline{g} |^2,
\]
so either $a = 0$, in which case $b = c = 0$, by the positivity
of~$B$, or $c = b \overline{g}$. This proves the claim.

Applying this result repeatedly shows that every $2 \times 2$ minor of
the block matrix $A_{I_i \cup I_j}$, with at least two entries in the
same block, is singular, and the entries of any off-diagonal block lie
in a single $G$-orbit. Thus there exists a unique Hermitian matrix $C$
such that all assertions in the first part hold, except for possibly
the claim that $C \in \bp_k( \overline{D}( 0, 1 ) )$.

Fix any vector $\bv \in \C^k$, and choose vectors
$\bw_j \in \C^{| I_j |}$ such that $\bu_j^* \bw_j = v_j$ if
$\bu_j \neq 0$, and arbitrarily otherwise. Define
$\bw := ( \bw_1^*, \ldots, \bw_k^* )^* \in \C^N$, and note that
\begin{equation}\label{Ecompress}
0 \leq \bw^* A \bw = %
\sum_{i, j = 1}^k \bw_i^* c_{i j} \bu_i \bu_j^* \bw_j = %
\sum_{i, j = 1}^k \overline{v_i} c_{i j} v_j = \bv^* C \bv,
\end{equation}
so $C \in \bp_k( \C )$. Consequently, the entries of $C$ are in
$\overline{D}( 0, 1 )$, by a positivity argument, because the diagonal
entries of $C$ are all~$0$ or~$1$.

It remains to show that $A$ and $C$ have the same rank. Let
$J := \{ j \in \{ 1, \ldots, k \} : A_{I_j} \neq 0 \}$ be the set of
indices of non-zero diagonal blocks of $A$, let
$I := \cup_{j \in J} I_j$, and, for all $j \in J$, let $j'$ be the
least element of $I_j$. Define a linear map $\pi : \C^J \to \C^I$ by
letting $\pi( \vi_j ) := \bu_{j', 1}^{-1} \vi'_{j'}$ for all
$j \in J$, where $\vi_j$ and $\vi'_{j'}$ are the
standard basis elements in $\C^J$ and $\C^I$ labelled by $j$ and $j'$,
respectively. Then
\[
\bv^* C_J \bv = \pi( \bv )^* A_I \pi( \bv ), \qquad %
\forall \bv \in \C^J,
\]
so $\pi$ restricts to a linear isomorphism between $\ker C_J$ and
$\ker A_I \cap \pi( \C^J )$. Since the matrices $A$ and $A_I$ have
equal rank, as have $C$ and $C_J$, it follows that the ranks of $A$
and $C$ are equal; note that $\ker A_I \cap \pi( \C^J )$ is naturally
isomorphic to $\ker C_J$, and $\{ \bu'_j : j \in J \}^\perp$ is a
subset of $\ker A_I$ which has trivial intersection with
$\pi( \C^J )$, where $\bu'_j$ is the natural embedding of $\bu_j$ in
$\C^I$.

\item We first establish existence of a partition satisfying (a)--(c),
by induction on~$N$. The result is obvious for $N = 1$, so assume the
result holds for all integers up to and including some $N \geq 1$, and
let $A \in \bp_{N + 1}( \C )$. Let $\{ I_1, \ldots, I_k \}$ be the
partition satisfying properties (a)--(c) for the $N \times N$
upper-left principal submatrix of $A$, and, for each $j$, fix a
non-negative number, denoted by $\alpha_j$, which is a $G$-orbit
representative for all entries in the diagonal block $A_{I_j}$.

Without loss of generality, we may assume $\alpha_j \neq 0$ for
all~$j$, since if $a_{m m} = 0$ then $a_{m n} = a_{n m} = 0$ for
all~$n$, by positivity. We consider three different cases.

\noindent\textbf{Case 1:}
$A_{I_j \cup \{ N + 1 \}} \not\in %
\bp_{| I_j | + 1}^1( \alpha_j \cdot G )$ for all $j$.

In this case, the partition
$\{ I_1, \ldots, I_k, I_{k + 1} := \{ N + 1 \} \}$ clearly has the
desired properties.

\noindent\textbf{Case 2:}
$A_{I_j \cup \{ N + 1 \}} \in \bp_{| I_j | + 1}^1( \alpha_j \cdot G )$
for a single value of $j$.

In this case, augmenting $I_j$ with $N + 1$ yields the desired
partition.

\noindent\textbf{Case 3:}
$A_{I_i \cup \{ N + 1 \}} \in \bp_{| I_i | + 1}^1( \alpha_i \cdot G )$
and
$A_{I_j \cup \{ N + 1 \}} \in \bp_{| I_j | + 1}^1( \alpha_j \cdot G )$
for distinct $i$ and $j$.

We show that this case cannot occur. By the induction hypothesis,
there exists $m \in I_i$ and $n \in I_j$ such that the $2 \times 2$
minor $A_{\{ m, n \}} \not\in \bp_2^1( \alpha_i \cdot G )$. Now set
$a_{N + 1, N + 1} = \alpha$; by assumption, there exist $g$, $h \in G$
such that $a_{m, N + 1} = \alpha g$ and $a_{n, N + 1} = \alpha h$.
Thus
\begin{equation}\label{E3x3blocks}
A_{\{ m, n, N + 1 \}} = \begin{pmatrix}
\alpha | g |^2 & a_{m n} & \alpha g \\
\overline{a_{m n}} & \alpha | h |^2 & \alpha h \\
\alpha \overline{g} & \alpha \overline{h} & \alpha
\end{pmatrix},
\end{equation}
so
\[
0 \leq \det A_{\{ m, n, N + 1 \}} = %
-\alpha | \alpha g \overline{h} - a_{m n} |^2
\]
and therefore $A_{\{ m, n \}} \in \bp_2^1( \alpha_i \cdot G )$, which
is a contradiction.

This completes the inductive step, and existence follows.

To prove the uniqueness of the decomposition, suppose
$\{ I_1, \ldots, I_k \}$ and $\{ J_1, \ldots, J_{k'} \}$ are two
partitions associated to a matrix $A \in \bp_N( \C )$, and satisfying
the desired properties. Without loss of generality, assume
$N \in I_i \cap J_j$, and $I_i \neq J_j$.  Let $\alpha := a_{N, N}$
and note that, since $I_i$ and $J_j$ are distinct and maximal, there
exist $m \in I_i$ and $n \in J_j$ such that $m \neq n$ and
$A_{\{ m, n \}} \not\in \bp_2^1(\alpha_i \cdot G)$ as above. But then
the principal minor $A_{\{ m, n, N \}}$ is of the form
\eqref{E3x3blocks}, which is impossible, as seen previously. It
follows that $I_i = J_j$ and the partition is unique, again by
induction on~$N$.

\item If $| c_{i j} | = 1$ for some $i \neq j$, then
$A_{I_i \cup I_j} = %
\bv \bv^* \in \bp_{| I_i \cup I_j |}^1( \alpha \cdot G )$
for some $\alpha$, where $\bv := ( \bu_i^*, c_{i j} \bu_j^* )^*$.
This contradicts the maximality of $I_i$ and $I_j$, so any
off-diagonal term $c_{i j} \in D( 0, 1 )$.
\qedhere
\end{enumerate}
\end{proof}

\begin{remark}
It is natural to ask if a similar result to Theorem~\ref{Tgroup}
holds if we assume the blocks to have rank bounded above, but not
necessarily by~$1$. This is, however, false, as verified by the
example $A = \Id_3$: the partitions
$\{ \{ 1, 2 \}, \{ 3 \} \}$,
$\{ \{ 2, 3 \}, \{ 1 \} \}$, and
$\{ \{ 1, 3 \}, \{ 2 \} \}$
are all such that $A_{I_i \times I_j}$ has rank at most $2$. However,
$A$ has rank~$3$, so there is no unique maximal partition when we
allow blocks to have rank~$2$ or higher.
\end{remark}

Using the maximal partition corresponding to each
subgroup~$G \subset \C^\times$, we define a stratification of the cone
$\bp_N( \C )$. The following notation will be useful.

\begin{definition}
Fix a subgroup $G \subset \C^\times$ and an integer $N \geq 1$.
\begin{enumerate}
\item Define $( \Pi_N, \prec )$ to be the partially ordered set of
partitions of $\{ 1, \ldots, N \}$, with $\pi' \prec \pi$ if $\pi$ is
a refinement of $\pi'$. Given a partition
$\pi = \{ I_1, \ldots, I_k \} \in \Pi_N$, let $| \pi | := k$ denote
the size of $\pi$.

\item Given a non-zero matrix $A \in \bp_N(\C)$, define
$\pi^G( A ) \in \Pi_N$ to be the unique maximal partition described in
Theorem~\ref{Tgroup}. Also define $\pi^G( {\bf 0}_{N \times N} )$ to
be the indiscrete partition $\{ \{ 1, \ldots, N \} \}$.

\item Given a partition $\pi = \{ I_1, \ldots, I_k \} \in \Pi_N$, let
\begin{equation}
\cals^G_\pi := \{ A \in \bp_N( \C ) : \pi^G( A ) = \pi \}.
\label{Estrata2}
\end{equation}
\end{enumerate}
\end{definition}

Given a subgroup $G$ of $\C^\times$, there is a natural stratification
of the cone according to the structure studied in Theorem
\ref{Tgroup}:
\[
\bp_N( \C ) = \bigsqcup_{\pi \in \Pi_N} \cals^G_\pi,
\]
where the set of strata is in bijection with $\Pi_N$. The following
result discusses basic properties of these Schubert cell-type strata.

\begin{proposition}
Fix a subgroup $G \subset \C^\times$ and an integer $N \geq 1$.
\begin{enumerate}
\item For any partition $\pi \in \Pi_N$, the set $\cals^G_\pi$ has
real dimension $| \pi |^2 + ( N - | \pi | ) \dim_\R G$, and closure
\[
\overline{\cals^G_\pi} = \bigsqcup_{\pi' \prec \pi} \cals^G_{\pi'}.
\]

\item For any $A \in \bp_N( \C )$, the rank of $A$ is at most
$| \pi^{\C^\times}( A ) |$.
\end{enumerate}
\end{proposition}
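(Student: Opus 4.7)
The plan is to treat the two assertions separately. Part~(2) follows immediately from the block decomposition established in Theorem~\ref{Tgroup}: writing $A_{I_i \times I_j} = c_{ij} \bu_i \bu_j^*$ gives $A = \Psi C \Psi^*$, where $\Psi$ is the $N \times k$ block-diagonal matrix with $j$th column-block $\bu_j$, so $\mathrm{rank}(A) \le \mathrm{rank}(C) \le k = |\pi^{\C^\times}(A)|$. For the dimension in part~(1), I will use the canonical parameterization of $\cals^G_\pi$ inherited from Theorem~\ref{Tgroup}: each non-zero $\bu_j \in \C^{|I_j|}$ contributes one real parameter for $\bu_{j,1} > 0$ and $\dim_\R G$ parameters per remaining entry (constrained to lie in $\bu_{j,1} \cdot G$), giving $k + (N - k) \dim_\R G$ altogether; the reduced matrix $C$ is Hermitian positive semidefinite with unit diagonal, and by Theorem~\ref{Tgroup}(3) together with the maximality condition~(c) its off-diagonal entries lie in the open disc $D(0,1)$, adding a further $k^2 - k$ real parameters and summing to $|\pi|^2 + (N - |\pi|) \dim_\R G$.

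For the closure statement, the inclusion $\bigsqcup_{\pi' \prec \pi} \cals^G_{\pi'} \subset \overline{\cals^G_\pi}$ is proved by explicit perturbation. Given $A' \in \cals^G_{\pi'}$, I decompose each block $J_l$ of the coarser partition $\pi'$ as $\bigsqcup_{m \in S_l} I_m$ and split $\bv_l = (\bu_m)_{m \in S_l}$ accordingly. Let $\Phi \in \{0,1\}^{k \times k'}$ be the indicator matrix of the refinement and set $C^{(\infty)} := \Phi C' \Phi^T$, so that $A'_{I_i \times I_j} = c^{(\infty)}_{ij} \bu_i \bu_j^*$. The perturbation $C^{(n)} := (1 - 1/n)\, C^{(\infty)} + (1/n)\, \Id_k$ remains positive semidefinite, retains unit diagonal, and now has every off-diagonal entry of modulus strictly less than one; hence the block matrix $A_n$ it determines lies in $\cals^G_\pi$ and converges to $A'$. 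For the reverse inclusion, I will use that ``rank at most one'' is a closed condition and that the closure of a $G$-orbit in $\C$ adds at most the origin: if $A_n \to A$ with $A_n \in \cals^G_\pi$, then $\pi$ still satisfies conditions (a)--(b) of Theorem~\ref{Tgroup} for $A$ after suitable zero-row bookkeeping, and the uniqueness of the maximal such partition forces $\pi^G(A) \prec \pi$.

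The main obstacle, and where the argument needs the most care, is the degenerate sub-case in which some coordinate $u_{j,a}^{(n)}$ tends to zero without the whole vector $\bu_j^{(n)}$ doing so: by positive-semidefinite Cauchy--Schwarz applied to $A_n$, the row and column of the limit $A$ indexed by $a$ must then vanish identically. My plan here is to carefully track such vanishing rows into the zero-block component of $\pi^G(A)$ and to verify by a direct case analysis, using the uniqueness clause of Theorem~\ref{Tgroup}(2) and the fact that zero blocks always fuse into one maximal zero block, that the resulting partition is indeed a coarsening of $\pi$; this is where I expect the bulk of the technical work to sit.
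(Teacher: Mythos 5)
Your treatment of part~(2) and the dimension count in part~(1) coincides essentially with the paper's proof (which parameterizes a generic matrix in $\cals^G_\pi$ by the strictly upper-triangular entries of $C$ and the vectors $\bu_j$, and extracts the rank bound from the block factorization). The only cosmetic slip is the citation of Theorem~\ref{Tgroup}(3), which is stated only for $G = \C^\times$, to justify the open-disc constraint on $C$; for general $G$ the off-diagonal entries of $C$ are only guaranteed to lie in $\overline{D}(0,1)$, but this does not change the count, since the admissible $C$ still form a set with non-empty interior inside the real $k(k-1)$-dimensional space of Hermitian $k\times k$ matrices with unit diagonal.

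For the closure identity the paper gives no argument --- it merely calls the second observation ``straightforward'' --- so you are supplying content the paper omits, and two genuine gaps remain. In your forward-inclusion perturbation you keep the vectors $\bu_i$ inherited from $A'$ fixed and only move $C^{(\infty)}$; this fails exactly when $A'$ has a zero diagonal block $A'_{J_l} = 0$ which $\pi$ splits into several parts $I_i \subset J_l$: all the corresponding $\bu_i$ vanish, so every $A_{n,I_i}$ is identically $0$, those zero blocks fuse in $\pi^G(A_n)$, and $\pi^G(A_n) \neq \pi$. You need to perturb the $\bu_i$ as well, e.g.\ by adding $n^{-1}$ times a fixed nonzero vector with entries in a common $G$-orbit. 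The more serious problem is the premise in your reverse inclusion that the closure of a $G$-orbit in $\C$ adds at most the origin. This is false for a dense non-closed subgroup such as $G = \{ e^{2\pi i q} : q \in \Q \}$, for which $\overline{G\cdot 1} = S^1$; and indeed the closure identity itself fails for such $G$. With $N = 2$, fix an irrational multiple $\theta$ of $2\pi$ and rational multiples $\theta_n \to \theta$, and put
\[
A_n := \begin{pmatrix} 1 & e^{i\theta_n} \\ e^{-i\theta_n} & 1 \end{pmatrix}
\longrightarrow
A := \begin{pmatrix} 1 & e^{i\theta} \\ e^{-i\theta} & 1 \end{pmatrix}.
\]
Each $A_n$ lies in $\cals^G_{\{\{1,2\}\}}$, yet $A \in \cals^G_{\{\{1\},\{2\}\}}$, which is not contained in $\bigsqcup_{\pi' \prec \{\{1,2\}\}} \cals^G_{\pi'} = \cals^G_{\{\{1,2\}\}}$. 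So the closure statement implicitly requires $G$ to be a \emph{closed} subgroup of $\C^\times$ --- this covers all cases the paper actually uses ($\{1\}$, $S^1$, $\C^\times$) --- and any correct proof of the reverse inclusion must invoke that closedness rather than the false ``adds at most the origin'' step.
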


Note that $G \subset \C^\times$ can have real dimension $0, 1$,
or~$2$.

\begin{proof}\hfill
\begin{enumerate}
\item Suppose $\pi^G( A ) = \{ I_1, \ldots, I_k \}$.
Theorem~\ref{Tgroup} implies that a generic matrix in $\cals^G_\pi$ is
created from a unique matrix $C \in \bp_k( \C^\times )$ with only ones
on the diagonal, and unique non-zero vectors $\bu_j \in \C^{I_j}$ with
$\bu_{j, 1} \in [ 0, \infty )$ and $\bu_{j, l} \in \bu_{j,1} \cdot G$
for all $l > 1$. Thus the degrees of freedom for $A$ equal those for
the strictly upper-triangular entries of~$C$ and for the~$\bu_j$,
i.e.,
\begin{align*}
\dim_\R \cals^G_\pi & = \binom{k}{2} \dim_\R \C + %
\sum_{j = 1}^k ( 1 + ( | I_j | - 1 ) \dim_\R G ) \\
 & = k^2 + ( N - k ) \dim_\R G.
\end{align*}
The second observation is straightforward.

\item This is an immediate consequence of Theorem~\ref{Tgroup}(3).
\qedhere
\end{enumerate}
\end{proof}

The following corollary provides a decomposition of a matrix
$A \in \bp_N( \C )$ that will be very useful for studying the kernel
of $c_0 \one{N} + c_1 A + \cdots + c_{N-1} A^{\circ ( N - 1 )}$.

\begin{corollary}\label{Lgroup}
Let $A \in \bp_N( \C )$ and let $G$ be a multiplicative subgroup of
$S^1$. There is a unique partition $\{ I_1, \ldots, I_k \}$ of
$\{ 1, \ldots, N \}$ such that the corresponding diagonal blocks
$A_{I_j}$ of $A$ satisfy the following properties.
\begin{enumerate}
\item The entries in each diagonal block $A_{I_j}$ belong to
$\alpha_j \cdot G$ for some $\alpha_j \geq 0$.
\item The diagonal blocks have maximal size, i.e., each diagonal block
is not contained in a larger diagonal block with entries in
$\alpha_j \cdot G$ for some $\alpha_j$.
\end{enumerate}
If, moreover, $G = \{ 1 \}$, then $A$ has rank at most $k$.

Finally, if $\{ I_1, \ldots, I_k \}$ is any partition satisfying (1)
but not necessarily (2), then the entries in every off-diagonal block
$A_{I_i \times I_j}$ also share the property that they lie in a single
$G$-orbit in $\C$.
\end{corollary}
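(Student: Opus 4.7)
The plan is to derive Corollary~\ref{Lgroup} as a direct specialization of Theorem~\ref{Tgroup} to subgroups of the unit circle. The one geometric point to verify is that, when $G \subset S^1$, condition~(1) of the Corollary already forces each diagonal block $A_{I_j}$ to have rank at most one, which is hypothesis~(a) of Theorem~\ref{Tgroup}; once this matching of hypotheses is in place, everything else is a quotation.

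First I would check the reduction. Suppose every entry of $A_{I_j}$ lies in $\alpha_j \cdot G$ with $\alpha_j \geq 0$. If $\alpha_j = 0$ then $A_{I_j} = 0$ by positivity. Otherwise $|a_{ik}| = \alpha_j$ for all $i,k$, and in particular $a_{ii} = \alpha_j$ since the diagonal entries of a PSD matrix are non-negative real. Writing $A_{I_j} = B^* B$, the columns $b_i$ of $B$ satisfy $\|b_i\|^2 = \alpha_j$ and $|\langle b_i, b_k \rangle| = \alpha_j = \|b_i\|\|b_k\|$, so equality in Cauchy--Schwarz forces all $b_i$ to be proportional. Hence $A_{I_j}$ has rank at most one and we may write $A_{I_j} = \bu_j \bu_j^*$; multiplying $\bu_j$ by a unimodular phase then ensures $\bu_{j,1} \in [0,\infty)$, and makes $\bu_j$ unique. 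I would also record that condition~(1) of the Corollary is equivalent to hypothesis~(b) of Theorem~\ref{Tgroup}: any $G$-orbit in $\C$ that meets the non-negative reals equals $\alpha_j \cdot G$ for some $\alpha_j \geq 0$, because the diagonal entries of $A_{I_j}$ force an orbit representative to be non-negative real.

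Next I would invoke Theorem~\ref{Tgroup}(2) with hypotheses (a), (b), (c) now verified, yielding existence and uniqueness of a partition $\{I_1, \ldots, I_k\}$ satisfying the Corollary's (1) and (2). The final sentence of the Corollary --- the single-$G$-orbit property of every off-diagonal block for any partition merely satisfying~(1) --- follows directly from Theorem~\ref{Tgroup}(1), which establishes that property using only (a) and (b) and without invoking maximality. For the rank statement with $G = \{1\}$, each diagonal block is the rank-one matrix $\alpha_j \one{|I_j|}$, and the ``equal ranks'' conclusion of Theorem~\ref{Tgroup}(1) yields $\mathrm{rank}\,A = \mathrm{rank}\,C \leq k$, since $C$ is a $k \times k$ matrix.

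The only real content of the proof is the Cauchy--Schwarz reduction in the first step; the rest is a bookkeeping translation between the Corollary's entry-level hypotheses and the block-level hypotheses that drive Theorem~\ref{Tgroup}. Accordingly, the ``main obstacle'' is very modest: identifying that common modulus for all entries of a PSD block collapses the block to rank one, after which the powerful structural content of Theorem~\ref{Tgroup} applies verbatim.
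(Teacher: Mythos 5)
Your proof is correct and follows the same overall reduction to Theorem~\ref{Tgroup} as the paper, but it replaces the paper's single non-trivial ingredient with a more elementary one. The paper justifies the step ``constant modulus in a PSD block implies rank one'' by citing the Hershkowitz--Neumann--Schneider theorem (Theorem~\ref{TMatrix01psd}); you instead obtain it directly from equality in Cauchy--Schwarz applied to the Gram vectors of the block. Both are valid; your argument is more self-contained and only uses the weak corollary of HNS that is actually needed, whereas the paper's reference has the advantage of aligning with machinery used again in the proof of Theorem~\ref{Tsimult}. One small bookkeeping point worth stating explicitly: the reduction must go both ways, i.e.\ a partition satisfies the Corollary's conditions (1) and (2) \emph{if and only if} it satisfies Theorem~\ref{Tgroup}'s conditions (a), (b), (c) when $G \subset S^1$. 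You verify the forward implication (Corollary $\Rightarrow$ Theorem hypotheses) carefully; the reverse is what transports the existence and uniqueness conclusions back, and it follows from your observation that for a PSD diagonal block the orbit representative can be taken non-negative real. Your treatment of the rank claim via the equal-rank conclusion of Theorem~\ref{Tgroup}(1), and of the final off-diagonal-orbit sentence via the part of Theorem~\ref{Tgroup}(1) that does not invoke maximality, is exactly what the paper intends.
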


Notice that the result follows from Theorem~\ref{Tgroup} because if
$G \subset S^1$, then every block with entries in a single $G$-orbit
automatically has rank at most one, by Theorem~\ref{TMatrix01psd}
below.

In what follows, we use Corollary~\ref{Lgroup} with the following two
choices of the subgroup~$G$:
\begin{enumerate}
\item $G = \{ 1 \}$, in which case all entries in each block of $A$
are equal;
\item $G = S^1$, so all entries in each block of $A$ have equal
modulus.
\end{enumerate}

\begin{remark}
Remark that the diagonal blocks of $A$ as in Corollary~\ref{Lgroup}
may be $1 \times 1$ (for example, in the case where the diagonal
entries of $A$ are all distinct). Moreover, the partition of the
indices as in Corollary~\ref{Lgroup} does not determine the rank
of~$A$. For instance, let $\omega_1$, \ldots, $\omega_N \in S^1$ be
pairwise distinct, and let
$\bu := ( \omega_1, \ldots, \omega_N )^*$. Then the identity matrix
$\Id_N$ and $\bu \bu^*$ have different ranks for $N \geq 2$, but both
matrices correspond to the partition of $\{ 1, \ldots, N \}$ into
singleton subsets.
\end{remark}

\subsection{Simultaneous kernels of Hadamard powers}

We now state and prove the main result of this section, which in
particular classifies the simultaneous kernels of Hadamard powers of a
positive semidefinite matrix.

\begin{theorem}\label{Tsimult}
Let $A \in \bp_N( \C )$ and let $\{ I_1, \ldots, I_k \}$ be the unique
partition of $\{ 1, \ldots, N \}$ satisfying the two conditions of
Corollary~\ref{Lgroup} with $G = \{ 1 \}$. Fix $B \in \bp_N( \C )$ with
no zero diagonal entries, and let $c_0$, \ldots, $c_{N - 1} > 0$. Then
\begin{equation*}
\ker \bigl( B \circ %
( c_0 \one{N} + \cdots + c_{N - 1} A^{\circ ( N - 1 )} ) \bigr) = %
\bigcap_{n \geq 0} \ker ( B \circ A^{\circ n} ) = %
\ker B_{I_1} \oplus \cdots \oplus \ker B_{I_k},
\end{equation*}
where $B_{I_j}$ are the diagonal blocks of $B$ corresponding to the
partition $\{ I_1, \ldots, I_k \}$.
\end{theorem}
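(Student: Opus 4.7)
The proof naturally splits into two equalities. The first, $\ker(B \circ \sum_{j=0}^{N-1} c_j A^{\circ j}) = \bigcap_{n \geq 0} \ker(B \circ A^{\circ n})$, is routine: each $B \circ A^{\circ j}$ is positive semidefinite by the Schur product theorem, so positivity of the $c_j$ forces the kernel of the sum to coincide with the intersection of the kernels of the summands for $j = 0, \ldots, N - 1$; Lemma~\ref{Lpower_exp} then expresses $B \circ A^{\circ n}$ for $n \geq N$ as a linear combination (with diagonal matrix coefficients) of $B \circ A^{\circ j}$ for $0 \leq j \leq N - 1$, extending the intersection to all $n \geq 0$. The second equality $\bigcap_{n \geq 0} \ker(B \circ A^{\circ n}) = \ker B_{I_1} \oplus \cdots \oplus \ker B_{I_k}$ carries the main content.

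\textbf{Easy direction of the second equality.} For $\bv$ with $\bv|_{I_j} \in \ker B_{I_j}$ for every $j$: positive semidefiniteness of $B$ combined with $\bv|_{I_j}^* B_{I_j} \bv|_{I_j} = 0$ forces the zero-padded extension of $\bv|_{I_j}$ to lie in $\ker B$, and hence $B_{I_p \times I_j} \bv|_{I_j} = 0$ for every $p$. Since the partition makes $A$ a block matrix with constant blocks of value $\beta_{pq}$, one has $(B \circ A^{\circ n})_{I_p \times I_q} = \beta_{pq}^n B_{I_p \times I_q}$, and thus $(B \circ A^{\circ n}) \bv = 0$ for every $n$.

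\textbf{Hard direction via compression.} For the converse, given $\bv \in \bigcap_n \ker(B \circ A^{\circ n})$, introduce the $k \times k$ positive semidefinite compressions $M_{ij} := \bv|_{I_i}^* B_{I_i \times I_j} \bv|_{I_j}$ and $\beta := (\beta_{ij})$ (with $\beta_{ii} = \alpha_i$, the common diagonal block value). Both matrices are PSD: for $\beta$ this is the compression argument in the proof of Theorem~\ref{Tgroup}, and Schur then gives $\beta^{\circ n}$ PSD as well. The identity
\[
0 = \bv^*(B \circ A^{\circ n}) \bv = \sum_{i,j} \beta_{ij}^n M_{ij} = \operatorname{tr}(\beta^{\circ n} M)
\]
for all $n \geq 0$, coupled with positive semidefiniteness of both factors, forces $\beta^{\circ n} M = 0$ for every $n$. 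Hence every column of $M$ lies in $\bigcap_{n \geq 0} \ker \beta^{\circ n}$, and it suffices to prove that this intersection is trivial, since then $M = 0$ and in particular $M_{ii} = \bv|_{I_i}^* B_{I_i} \bv|_{I_i} = 0$, giving $\bv|_{I_i} \in \ker B_{I_i}$ by positive semidefiniteness of $B_{I_i}$.

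\textbf{Main technical lemma.} To prove $\bigcap_n \ker \beta^{\circ n} = \{0\}$, apply Vandermonde interpolation to the row equation $\sum_j \beta_{ij}^n v_j = 0$ to obtain, for each $i$ and each distinct value $\gamma$ in the $i$-th row, $\sum_{j \in S_{i,\gamma}} v_j = 0$ with $S_{i,\gamma} := \{j : \beta_{ij} = \gamma\}$. Specialising to $\gamma = \alpha_i$ (assuming $\alpha_i > 0$) yields
\[
v_i + \sum_{j \in S_i \setminus \{i\}} v_j = 0, \qquad S_i := S_{i, \alpha_i}.
\]
The positive semidefiniteness bound $|\beta_{ij}|^2 \leq \alpha_i \alpha_j$ forces $\alpha_j \geq \alpha_i$ whenever $\beta_{ij} = \alpha_i$ and $j \neq i$; combining this with the maximality of the partition (Theorem~\ref{Tgroup}), which forbids the simultaneous equality $\alpha_i = \alpha_j = \beta_{ij}$, yields the strict inequality $\alpha_j > \alpha_i$ for every $j \in S_i \setminus \{i\}$. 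An induction on decreasing $\alpha$-values then kills the $v_i$ one block at a time: the index attaining the maximum $\alpha$ has $S_i = \{i\}$ so that $v_i = 0$ immediately, and at each subsequent step every higher-$\alpha$ contribution in the equation already vanishes by the inductive hypothesis. The degenerate case of some $\alpha_i = 0$ is handled separately: positive semidefiniteness of $A$ forces the entire corresponding row and column of $A$ to vanish, and maximality permits at most one such block, after which combining $B\bv = 0$ (from $n = 0$) with the positive-$\alpha$ conclusion finishes the argument via one more PSD extension. The main obstacle lies precisely in this inductive step, which succeeds only through the tight interplay of Vandermonde separation, the PSD minor bound, and the maximality of the partition.
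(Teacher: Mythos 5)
Your proof is correct, but it takes a genuinely different and substantially more elementary route than the paper's. The paper proves the hard inclusion $\bigcap_n \ker(B \circ A^{\circ n}) \subseteq \bigoplus_j \ker B_{I_j}$ in two stages: first it decomposes according to the coarser $S^1$-partition $\{J_1,\ldots,J_l\}$ (entries of constant modulus), using a delicate limiting argument --- Kronecker's theorem on simultaneous Diophantine approximation, extraction of a convergent subsequence $(a_{ii}^{-1}A)^{\circ n_r}$, and a careful three-term estimate --- and only then, within each $J_p$-block, it uses the Hershkowitz--Neumann--Schneider structure theorem to write $A_{J_p} = \bv\bv^*$ with $\bv \in (S^1)^{|J_p|}$ and applies a Vandermonde argument with the distinct phases $\lambda_1,\ldots,\lambda_t$. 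You bypass the $S^1$-stage entirely by compressing the quadratic form to a $k \times k$ problem: since $A$ is block-constant with block values $\beta_{ij}$, the vanishing $\bv^*(B \circ A^{\circ n})\bv = 0$ reduces to a trace identity between the two positive semidefinite $k \times k$ matrices $\beta^{\circ n}$ and $M$, and the lemma $\operatorname{tr}(XY)=0$ with $X,Y \succeq 0 \Rightarrow XY = 0$ then gives $\beta^{\circ n}M = 0$ for all $n$. The Vandermonde argument is applied directly to the rows of $\beta$, and the induction on decreasing $\alpha$-values is driven precisely by the interplay of the minor bound $|\beta_{ij}|^2 \leq \alpha_i\alpha_j$ and the maximality of the $\{1\}$-partition, which you correctly observe forbids $\alpha_i = \beta_{ij} = \alpha_j$ for $i \neq j$. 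No dynamics, no number theory, and no two-stage reduction are needed. The result is shorter and entirely finite-dimensional linear algebra; what the paper's route arguably buys is the extra structural information about the intermediate $S^1$-stratification, which is used elsewhere.

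One small technical slip, immaterial to the argument: with the convention $M_{ij} = \bv|_{I_i}^* B_{I_i \times I_j}\bv|_{I_j}$, the quantity $\sum_{i,j}\beta_{ij}^n M_{ij}$ equals $\operatorname{tr}(\beta^{\circ n}\overline{M})$ rather than $\operatorname{tr}(\beta^{\circ n}M)$. Since $\overline{M}$ (equivalently $M^T$) is also Hermitian positive semidefinite, the conclusion $\beta^{\circ n}\overline{M} = 0$ still follows, and conjugating gives $\overline{\beta}^{\circ n} M = 0$; the level set $S_i = \{j : \overline{\beta_{ij}} = \alpha_i\} = \{j : \beta_{ij} = \alpha_i\}$ is unchanged because $\alpha_i \in \R$, so the Vandermonde and the induction on the $\alpha_j$ go through verbatim. (Also worth noting: in the degenerate case $\alpha_{i_0} = 0$, once the other $v_j$ are killed, the $n=0$ row $\mathbf{1}^T M = 0$ already forces $M_{i_0 i_0} = 0$, so the extra PSD-extension step you sketch, while valid, is not strictly required.)
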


\noindent
Note that when the Hadamard product is replaced by the standard matrix
product, the simultaneous kernel $\cap_{n \geq 1} \ker A^n$ equals
$\ker A$. In contrast, characterizing the simultaneous kernels of
Hadamard powers is a challenging problem.  Also observe in
Theorem~\ref{Tsimult} that the simultaneous kernel does not depend on
$c_0$, \ldots, $c_{N - 1}$.

The proof of Theorem~\ref{Tsimult} repeatedly uses a technical result,
which we quote here for convenience.

\begin{theorem}[{Hershkowitz--Neumann--Schneider,
\cite[Theorem~2.2]{Matrix01psd}}]\label{TMatrix01psd}
Given an $N \times N$ complex matrix $A$, where $N \geq 1$, the
following are equivalent.
\begin{enumerate}
\item $A$ is positive semidefinite with entries of modulus $0$ or $1$,
i.e., $A \in \bp_N( S^1 \cup \{ 0 \} )$.
\item There exist a diagonal matrix $D$, all of whose diagonal entries
lie in $S^1$, as well as a permutation matrix $Q$, such that
$( Q D )^{-1} A ( Q D )$ is a block diagonal matrix with each diagonal
block a square matrix of either all ones or all zeros.
\end{enumerate}
\end{theorem}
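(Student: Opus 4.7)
The approach is to dispatch the easy direction $(2) \Rightarrow (1)$ first, then do the real work for $(1) \Rightarrow (2)$. For $(2) \Rightarrow (1)$: if $(QD)^{-1} A (QD)$ is block-diagonal with each block all-ones or all-zeros, then it is PSD (an all-ones $k \times k$ block factors as $\bu \bu^*$ with $\bu = (1,\ldots,1)^*$), and since $QD$ is unitary (a permutation matrix composed with a diagonal unitary), $(QD)^{-1} = (QD)^*$, so $A$ is a unitary conjugation of a PSD matrix and hence PSD. The entries of $A$ are obtained from those of the block form by permutation and multiplication by phases $d_i \overline{d_j} \in S^1$, so they retain modulus $0$ or $1$.

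For the harder direction $(1) \Rightarrow (2)$, first extract some normalizations from positive semidefiniteness. Each diagonal entry $a_{ii}$ is real and nonnegative, with $|a_{ii}| \in \{0, 1\}$, hence $a_{ii} \in \{0, 1\}$. If $a_{ii} = 0$, then nonnegativity of the $2 \times 2$ principal minor $a_{ii} a_{jj} - |a_{ij}|^2 \geq 0$ forces $a_{ij} = 0$ for all $j$; collect all such indices into a single block, which will become the all-zeros block.

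On the remaining index set (where $a_{ii} = 1$), define $i \sim j$ by $a_{ij} \neq 0$, equivalently $|a_{ij}| = 1$. Reflexivity and symmetry are immediate. For transitivity, I plan to exploit the $3 \times 3$ principal minor on $\{i, j, k\}$ when $|a_{ij}| = |a_{jk}| = 1$: a direct expansion gives
\[
\det A_{\{i,j,k\}} = -1 + 2 \Re(a_{ij} a_{jk} \overline{a_{ik}}) - |a_{ik}|^2 \geq 0.
\]
Since $|a_{ik}| \in \{0, 1\}$, the case $a_{ik} = 0$ is ruled out (it would force $0 \geq 1$), so $|a_{ik}| = 1$, giving transitivity. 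Moreover, the resulting inequality $2\Re(a_{ij} a_{jk} \overline{a_{ik}}) \geq 2$ combined with $|a_{ij} a_{jk} \overline{a_{ik}}| = 1$ forces the cocycle identity $a_{ik} = a_{ij} a_{jk}$.

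Within each equivalence class, fix a representative $i_0$ and set $d_i := a_{i, i_0} \in S^1$, so $d_{i_0} = 1$. The cocycle (together with Hermitian symmetry $a_{j, i_0} = \overline{a_{i_0, j}}$) yields $a_{ij} = a_{i, i_0} a_{i_0, j} = d_i \overline{d_j}$ on that class, so the corresponding diagonal block of $A$ equals $\bd \bd^*$ with $\bd := (d_i)$. Let $Q$ be a permutation matrix that lists equivalence classes (and then the zero-diagonal indices) in contiguous order, and let $D$ be the diagonal matrix whose entry at index $i$ is $d_i$ within its class and $1$ on the zero-diagonal indices. Then $D$ is unitary, and $(QD)^{-1} A (QD) = (QD)^* A (QD)$ is block-diagonal: each equivalence-class block becomes $D^* \bd \bd^* D = (1,\ldots,1)^* (1,\ldots,1)$, the all-ones matrix, and the zero-diagonal indices contribute the all-zeros block.

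The main obstacle is the $3 \times 3$ step: it must simultaneously deliver transitivity of $\sim$ and the multiplicative cocycle $a_{ik} = a_{ij} a_{jk}$, since without the latter one cannot realize each block as a rank-one outer product $\bd \bd^*$ and diagonalize it to an all-ones block by a single conjugation. Everything else reduces to bookkeeping with the permutation $Q$ and the unitary diagonal $D$.
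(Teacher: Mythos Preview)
The paper does not supply its own proof of this statement: it is quoted verbatim from the cited source \cite{Matrix01psd} and used as a black box in the proof of Theorem~\ref{Tsimult}. So there is no ``paper's proof'' to compare against, and your write-up stands as an independent argument.

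Your argument is correct. The direction $(2)\Rightarrow(1)$ is routine, and for $(1)\Rightarrow(2)$ the key computation is the $3\times 3$ principal minor, which you expand correctly: with $a_{ii}=a_{jj}=a_{kk}=1$ and $|a_{ij}|=|a_{jk}|=1$, one gets $\det A_{\{i,j,k\}} = -1 + 2\Re(a_{ij}a_{jk}\overline{a_{ik}}) - |a_{ik}|^2$, and nonnegativity forces both $|a_{ik}|=1$ (transitivity) and $a_{ik}=a_{ij}a_{jk}$ (the cocycle). The cocycle then gives $a_{ij}=a_{i,i_0}a_{i_0,j}=d_i\overline{d_j}$ on each equivalence class, so each nonzero block is $\bd\bd^*$ and is sent to an all-ones block by conjugation with $\diag(\bd)^{-1}$. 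One point worth making explicit in the write-up: the off-diagonal blocks between distinct equivalence classes vanish by the very definition of $\sim$, and those involving a zero-diagonal index vanish by the $2\times 2$ minor argument you already gave; this is what guarantees the conjugated matrix is genuinely block diagonal. With that sentence added, the proof is complete and self-contained.
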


Equipped with this result, we now prove the above theorem.

\begin{proof}[Proof of Theorem~\ref{Tsimult}]
We begin by showing the first equality. One inclusion is immediate;
for the reverse inclusion, let
$\bu \in \ker \bigl( B \circ ( c_0 \one{N} + c_1 A + \cdots + %
c_{N - 1} A^{\circ ( N - 1 )} ) \bigr)$.
Then $\bu \in \ker ( B \circ A^{\circ n} )$ for $0 \leq n \leq N - 1$,
since $A$ and $B$ are positive semidefinite.
Applying Lemma~\ref{Lalgebra}, we conclude that
$\bu \in \bigcap_{n \geq 0} \ker ( B \circ A^{\circ n} )$.

We now prove the second equality. Let $\{ J_1, \ldots, J_l \}$ be a
partition of $\{ 1, \ldots, N \}$ as in Corollary~\ref{Lgroup} with
$G = S^1$, i.e., with the entries in the diagonal blocks having the same
absolute value, instead of necessarily being constant. Clearly,
$\{ I_1, \ldots, I_k \}$ is a refinement of the partition
$\{ J_1, \ldots, J_l \}$. We proceed in three steps. We first show
that
\begin{equation}\label{Estep1}
\bigcap_{n \geq 0} \ker ( B \circ A^{\circ n} ) \subset %
\bigcap_{m \geq 1} \ker( B_{J_1} \circ A_{J_1}^{\circ m} ) \oplus %
\cdots \oplus \bigcap_{m \geq 1} %
\ker( B_{J_l} \circ A_{J_l}^{\circ m} ).
\end{equation}
We then prove that each kernel of the form
$\bigcap_{m \geq 1} \ker( B_{J_i} \circ A_{J_i}^{\circ m} )$ further
decomposes into
$\ker B_{I_{i_1}} \oplus \cdots \oplus \ker B_{I_{i_p}}$,
where $J_i = I_{i_1} \cup \cdots \cup I_{i_p}$. Finally,
we prove the reverse inclusion, i.e.,
\[
\bigoplus_{m = 1}^k \ker B_{I_m} \subset %
\bigcap_{n \geq 0} \ker( B \circ A^{\circ n} ).
\]

Equation~\eqref{Estep1} is obvious if $l = 1$, so assume $l \geq 2$.
Let $i$ satisfy $a_{i i} \geq \max_{j =1}^N a_{j j} > 0$ and suppose,
without loss of generality, that $i \in J_1$. Now write the matrices
$A$ and $B$ in block form:
\[
A = \begin{pmatrix}
 A_{1 1} & A_{1 2} \\
 A_{1 2}^* & A_{2 2}
\end{pmatrix} \quad \text{and} \quad %
B = \begin{pmatrix}
 B_{1 1} & B_{1 2} \\
 B_{1 2}^* & B_{2 2}
\end{pmatrix},
\]
where the $( 1, 1 )$ blocks correspond to the $J_1 \times J_1$
entries, and the $( 1, 2 )$ blocks correspond to the
$J_1 \times J_1^c$ entries of the matrices, where
$J_1^c := J_2 \cup \cdots \cup J_l$.  Then, by
Theorem~\ref{TMatrix01psd}, we conclude that
$a_{i i}^{-1} A_{1 1} = \bv \bv^*$ for some
$\bv \in ( S^1 )^{| J_1 |}$. Moreover, we claim that the entries of
$a_{i i}^{-1} A_{1 2}$ have modulus less than one. Indeed, since the
entries of the matrix $a_{i i}^{-1} A$ lie in the closed unit disc
$\overline{D}( 0, 1 )$, by a positivity argument, we can choose a
sequence of integer powers $n'_k \to \infty$ such that
$A_\infty := \lim_{k \to \infty} ( a_{i i}^{-1} A)^{\circ n'_k}$
exists entrywise. Note that $A_\infty \in \bp_N( \C )$, by the Schur
product theorem. Now let $m \in J_1^c$ and consider the submatrix
$A' := ( A_\infty )_{J_1 \cup \{ m \}}$, which is positive
semidefinite and has entries with modulus $0$ or $1$. Thus, by
Theorem~\ref{TMatrix01psd}, $A' = \bw \bw^*$ for some
$\bw \in \C^{| J_1 | + 1}$ with $| w_i | = 0$ or $1$.  By the
maximality of $J_1$, it follows that $| a_{i i}^{-1} a_{j, m} | < 1$
for all $j \in J_1$ and so all the entries of $a_{i i}^{-1} A_{1 2}$
have modulus less than $1$.

Now let
$\bu = ( \bu_1^*, \bu_2^* )^* \in %
\bigcap_{n \geq 0} \ker ( B \circ A^{\circ n} )$,
where $\bu_1 \in \C^{| J_1 |}$, $\bu_2 \in \C^{| J_1^c |}$, and
$\| \bu \| = 1$. We will prove that
$( B_{1 1} \circ A_{1 1}^{\circ n} ) \bu_1 = 0$ and
$( B_{2 2} \circ A_{2 2}^{\circ n} ) \bu_2 = 0$ for all $n \geq 1$. To
do so, fix $\epsilon \in ( 0, 1 )$ and let
$\alpha_j := \arg( v_j ) / ( 2 \pi )$ for $j = 1$, \ldots, $| J_1 |$.
Suppose $\{ \theta_0 := 1, \theta_1, \ldots, \theta_p \}$ is a
$\Q$-linearly independent basis of the $\Q$-linear span of
$\{1, \alpha_1, \ldots, \alpha_{| J_1 |} \}$. Then there exist
integers $m_{j k}$, and an integer $M \geq 1$, such that
\[
\alpha_j = \frac{1}{M} \sum_{k = 0}^p m_{j k} \theta_k %
\qquad ( j = 1, \ldots, | J_1 | ).
\]
By Kronecker's theorem \cite[Chapter~23]{HardyWright}, since
$\theta_k / M$ are $\Q$-linearly independent, there exist
sequences of integers $( n_r )_{r = 1}^\infty$ and
$( p_r )_{r = 1}^\infty$ such that $n_r \to \infty$ and
$| n_r ( \theta_k / M ) - ( \theta_k / M ) - p_r | < \epsilon$ for
$0 \leq k \leq p$. It follows that, for any $j$, $j' \in J_1$,
\begin{align*}
| ( a_{i i}^{-1} A_{1 1} )_{j, j'}^{n_r} - %
( a_{i i}^{-1} A_{1 1} )_{j, j'} | & = %
|\exp\bigl( 2 \pi i n_r ( \alpha_j - \alpha_{j'} ) \bigr) - %
\exp\bigl( 2 \pi i ( \alpha_j - \alpha_{j'} ) \bigr) | \\
 & \leq \biggl| 2 \pi \sum_{k = 0}^p %
( m_{j k} - m_{j' k} ) \Bigl( \frac{n_r \theta_k - \theta_k}{M} - %
p_r \Bigr) \biggr| \\
 & \leq 2 \pi \max_{k = 0, \ldots, p} %
| n_r ( \theta_k / M ) - ( \theta_k / M ) - p_r | %
\sum_{k = 0}^p | m_{j k} - m_{j' k} | \\
 & \leq C_{j j'} \epsilon
\end{align*}
for some constant $C_{j j'} \geq 0$ independent of $\epsilon$. Let
$C := \max\{ 1, C_{j j'} : j, j' \in J_1 \}$.

Replacing $( n_r )_{r = 1}^\infty$ by a subsequence if necessary, we
may assume without loss of generality that
$( a_{i i}^{-1} A )^{\circ n_r}$ converges entrywise to a limit
\[
A_\infty := \begin{pmatrix}
 A_{\infty, 1 1} & {\bf 0}_{| J_1 | \times | J_1^c |} \\
 {\bf 0}_{| J_1^c | \times | J_1 | } & A_{\infty, 2 2}
\end{pmatrix}.
\]
Passing to a further subsequence, we may also assume the entries of
$( a_{i i}^{-1} A_{1 1} )^{\circ n_r} - A_{\infty, 1 1}$ are at most
$C \epsilon$ in modulus.  Moreover,
\[
\sum_{k = 1}^2 \bu_k^* ( B_{k k} \circ A_{\infty, k k} ) \bu_k = %
\lim_{r \to \infty} \bu^* ( B \circ ( a_{i i}^{-1} A )^{\circ n_r} ) \bu = 0,
\]
whence $( B_{k k} \circ A_{\infty, k k} ) \bu_k = 0$ for $k = 1$, $2$.
Therefore, since $\| \bu_1 \| \leq 1$, 
\begin{align*}
\| ( B_{1 1} & \circ A_{1 1} ) \bu_1 \| \\
 & \leq a_{i i} \| \bigl( B_{1 1} \circ ( a_{i i}^{-1} A_{1 1} - %
( a_{i i}^{-1} A_{1 1} )^{\circ n_r} ) \bigr) \bu_1 \| %
+ \| ( B_{1 1} \circ \bigl( ( a_{i i}^{-1} A_{1 1} )^{\circ n_r} - %
A_{\infty, 1 1 } \bigr) ) \bu_1 \| \\
 & \leq a_{i i} \| B_{1 1} \circ ( a_{i i}^{-1} A_{1 1} - %
( a_{i i}^{-1} A_{1 1} )^{\circ n_r} ) \| + %
\| B_{1 1} \circ \bigl( ( a_{i i}^{-1} A_{1 1} )^{\circ n_r} - %
A_{\infty, 1 1} \bigr) \| \\
 & \leq \max_{j, k \in J_1} | B_{j k} | \, %
( a_{i i} + 1 ) \, | J_1 | \, C \epsilon;
\end{align*}
as $\epsilon$ is arbitrary, we must have 
$( B_{1 1} \circ A_{11} ) \bu_1 = 0$. Furthermore, since
$\bu \in \ker B \circ A$, so
\[
( B_{1 1} \circ A_{1 1}) \bu_1 + ( B_{1 2} \circ A_{1 2} ) \bu_2 = 0,
\]
whence $( B_{1 2} \circ A_{1 2} ) \bu_2 = 0$, and
\[
( B_{1 2} \circ A_{1 2} )^* \bu_1 + %
( B_{2 2} \circ A_{2 2} ) \bu_2 = ( B_{2 1} \circ A_{2 1} ) \bu_1 + %
( B_{2 2} \circ A_{2 2} ) \bu_2 = 0.
\]
This implies that
\[
0 = \bigl( ( B_{1 2} \circ A_{1 2} ) \bu_2 )^* \bu_1 + %
\bu_2^* ( B_{2 2} \circ A_{2 2} ) \bu_2 = 
\bu_2^* ( B_{2 2} \circ A_{2 2} ) \bu_2,
\]
and therefore $( B_{2 2} \circ A_{2 2} ) \bu_2 = 0$, since
$B_{2 2} \circ A_{2 2}$ is positive semidefinite.

Repeating the same argument, with $A$ replaced by $A^{\circ m}$ for
some fixed $m \geq 1$, we conclude that
\[
(B_{k k} \circ A_{k k}^{\circ m}) \bu_k = 0 \qquad %
( k = 1, 2, \ m \geq 1 ).
\]
Hence
\[
\bigcap_{m \geq 1} \ker( B \circ A^{\circ m} ) \subset %
\bigcap_{m \geq 1} \ker( B_{1 1} \circ A_{1 1}^{\circ m} ) \oplus %
\bigcap_{m \geq 1} \ker( B_{2 2} \circ A_{2 2}^{\circ m} ),
\]
and we conclude by induction that
\[
\bigcap_{n \geq 0} \ker( B \circ A^{\circ n} ) \subset %
\bigcap_{m \geq 1} \ker( B_{J_1} \circ A_{J_1}^{\circ m} ) \oplus %
\cdots \oplus %
\bigcap_{m \geq 1} \ker( B_{J_l} \circ A_{J_l}^{\circ m} ).
\]

We now examine the simultaneous kernel of Hadamard powers of a
non-zero diagonal block $B_{J_i} \circ A_{J_i}^{\circ m}$. Assume
without loss of generality that $| a_{j k} | = 1$ for all $j$,
$k \in J_i$, and that $J_i = I_1 \cup \cdots \cup I_t$ for some
integer $t \geq 1$. By Theorem~\ref{TMatrix01psd}, we obtain
$A_{J_i} = \bv \bv^*$ for some $\bv \in ( S^1 )^{| J_i |}$. It follows
that $A_{J_i}$ is itself a block matrix with $1$ throughout each
diagonal sub-block.  Thus we can write
\[
\bv = ( \lambda_1 {\bf 1}_{1 \times n_1}, \ldots, %
\lambda_t {\bf 1}_{1 \times n_t} )^T \quad \text{and} \quad %
A_{J_i} = \bv \bv^* = %
( \lambda_i \overline{\lambda_j} %
{\bf 1}_{n_i \times n_j} )_{i, j = 1}^t,
\]
with $\lambda_1$, \ldots, $\lambda_t \in S^1$ pairwise distinct, and
$| J_1 | = n_1 + \cdots + n_t$.

Now let
$\bu \in \bigcap_{m \geq 1} \ker ( B_{J_i} \circ A_{J_i}^{\circ m} )$
and let
$\bu = ( \bu_1^*, \ldots, \bu_t^* )^* \in %
\C^{n_1} \oplus \cdots \oplus \C^{n_t}$
be the decomposition of $\bu$ corresponding to the
partition $\{ I_1, \ldots, I_t \}$ of $J_i$. Let
$B_{j k} := B_{I_j, I_k}$; we claim that $\bu_j \in \ker B_{j j}$ for
all $j$. Note that
\[
\sum_{k = 1}^t B_{j k} ( \lambda_j \overline{\lambda_k} )^m \bu_k %
= 0 \quad \forall m \geq 1,
\]
from which it follows that
\[
\sum_{k = 1}^t ( \bu_j^* B_{j k} \bu_k ) %
\bigl( \overline{\lambda_k} \bigr)^{m - 1} = 0 %
\quad \forall m \geq 1.
\]
Thus, for fixed $j$, the vector $( \bu_j^* B_{j k} \bu_k )_{k = 1}^t$
belongs to the kernel of the transpose of the Vandermonde matrix
\begin{equation*}
V = ( \overline{\lambda_j}^{k - 1} )_{j, k =1}^t.
\end{equation*}
Since the $\lambda_j$ are distinct and non-zero, the matrix $V$ is
non-singular. Consequently, $\bu_j^* B_{j k} \bu_k = 0$ for all $j$,
$k$; in particular, $\bu_j^* B_{j j} \bu_j = 0$, and therefore
$\bu_j \in \ker B_{j j}$.  This completes this step and shows that
\[
\bigcap_{n \geq 0} \ker ( B \circ A^{\circ n} ) \subset %
\ker B_{I_1} \oplus \cdots \oplus \ker B_{I_t}.
\]

We now prove the reverse inclusion. First, we claim that if
$C = ( C_{i j} )_{i, j = 1}^t$ is a block matrix in $\bp_N( \C )$, and
$\bu = ( \bu_1^*, \ldots, \bu_t^* )^* \in \C^N$ is the corresponding
block decomposition of $\bu$, then
\begin{equation}\label{Equad}
\bu \in \ker C_{1 1} \oplus \cdots \oplus \ker C_{t t} %
\quad \implies \quad \bu \in \ker C.
\end{equation}
We prove the claim by induction on $t$. The base case of $t = 1$ is
obvious. Now suppose \eqref{Equad} holds for $t - 1$ blocks. Let
$\bu = ( \bu_1^*, \ldots, \bu_t^* )^* = ( {\bu'}^*, \bu_t^*)^*$,
with $\bu_j \in \ker C_{j j}$.  Then $\bu_j^* C_{j j} \bu_j = 0$ for
all $j$. Partition $C$ with respect to the same decomposition:
\begin{equation*}
C = \begin{pmatrix} C' & C'_t \\ ( C'_t )^* & C_{t t} \end{pmatrix}.
\end{equation*}
By the induction hypothesis, ${\bu'}^* C' \bu' = 0$. On the other
hand, for any $\lambda \in \R$,
\begin{equation*}
0 \leq %
( {\bu'}^*, \lambda \bu_t^* ) C ( {\bu'}^*, \lambda \bu_t^* )^* = %
2 \lambda \Re ( {\bu'}^* C'_t \bu_t ).
\end{equation*}
This implies that $\Re ( {\bu'}^* C'_t \bu_t ) = 0$, from which it
follows immediately that $\bu^* C \bu = 0$, and so $\bu \in \ker C$.
This proves the claim.

Now, to conclude the proof, let
$\bu \in \ker B_{I_1} \oplus \cdots \oplus \ker B_{I_k}$. Then, for
all $n \geq 0$,
$\bu \in \ker (B_{I_1} \circ A_{I_1} ^{\circ n}) \oplus \cdots %
\oplus \ker ( B_{I_k} \circ A_{I_k}^{\circ n} )$,
since the entries of $A$ are constant in each diagonal block
$A_{I_j}$. It follows by~\eqref{Equad} that
$\bu \in \ker ( B \circ A^{\circ n} )$ for all $n \geq 0$.

This concludes the proof of the theorem.
\end{proof}

\begin{remark}
We note the following consequence of Theorem~\ref{Tsimult}.
Given a matrix $B \in \bp_N( \C )$ with no zero diagonal entries, as
$A$ runs over the uncountable set $\bp_N( \C )$, the set of
simultaneous kernels
\[
\{ \cap_{n \geq 0} \ker ( B \circ A^{\circ n} ) : A \in \bp_N( \C ) \}
\]
is, nevertheless, a finite set of subspaces of $\C^N$. Moreover, this
finite set is indexed by partitions of the set
$\{ 1, \ldots, N \}$. The case when $B = \one{N}$, or, more generally,
when $B$ has no zero diagonal entries, is once again in contrast with
the behaviour for the usual matrix powers, and provides a
stratification of the cone~$\bp_N( \C )$.
\end{remark}

We conclude this section by strengthening Theorem~\ref{Tsimult}. Given
an integer $N \geq 1$, matrices $A$, $B \in \bp_N( \C )$, and a
partition $\pi = \{ I_1, \ldots, I_k \} \in \Pi_N$, let
\begin{equation}
\mathcal{K}_\pi( A, B ) := \bigoplus_{j = 1}^k %
\bigcap_{n \geq 0} \ker( B_{I_j} \circ A_{I_j}^{\circ n} ).
\end{equation}
We have from the proof of Theorem~\ref{Tsimult} that
\begin{equation}\label{Esimult}
\mathcal{K}_{\{ \{ 1, \ldots, N \} \}}( A, B ) = %
\mathcal{K}_{\pi^{S^1}( A )}( A, B ) = %
\mathcal{K}_{\pi^{\{ 1 \}}( A )}( A, B ).
\end{equation}
Our final result analyzes the set of partitions $\pi$ for which
Equation~\eqref{Esimult} holds.

\begin{theorem}
Fix an integer $N \geq 1$ and matrices $A$, $B \in \bp_N( \C )$, with
$B$ having non-zero diagonal entries. Then,
\begin{equation}
\{ \pi \in \Pi_N : \mathcal{K}_{\{ \{ 1, \ldots, N \} \}}( A, B ) = %
\mathcal{K}_\pi( A, B ) \} \supset %
\{ \pi : \pi \prec \pi^{\{ 1 \}}( A ) \}.
\end{equation}
The reverse inclusion holds if $B \in \bp_N^1( \C )$.
\end{theorem}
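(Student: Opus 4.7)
The plan is to reduce both inclusions to Theorem~\ref{Tsimult} applied to each submatrix pair $(A_{I_j}, B_{I_j})$, after first establishing that the $G = \{1\}$ maximal partition behaves well under restriction. The entire argument turns on this submatrix claim.

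\textbf{Structure of submatrix partitions.} Denote $\pi^{\{1\}}(A) = \{I_1^{(0)}, \ldots, I_{k_0}^{(0)}\}$, where $A_{I_k^{(0)}}$ is constant of value $\alpha_k \geq 0$. Theorem~\ref{Tgroup} applied with $G = \{1\}$ shows that every off-diagonal block $A_{I_k^{(0)} \times I_l^{(0)}}$ is also a constant matrix, say with entry $\gamma_{k l}$; maximality of $\pi^{\{1\}}(A)$ forces $\gamma_{k l} \neq \alpha_k$ whenever $\alpha_k = \alpha_l$, and positivity forces $\gamma_{k l} = 0$ when $\alpha_k = 0$. These same obstructions prevent mergers within any submatrix. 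Consequently, for any nonempty $I_j \subset \{1, \ldots, N\}$, I claim
\[
\pi^{\{1\}}(A_{I_j}) = \{I_k^{(0)} \cap I_j : I_k^{(0)} \cap I_j \neq \emptyset\}.
\]

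\textbf{Forward inclusion.} If $\pi \prec \pi^{\{1\}}(A)$, then each $I_j$ is a disjoint union of blocks $I_k^{(0)}$, and the claim reduces to $\pi^{\{1\}}(A_{I_j}) = \{I_k^{(0)} : I_k^{(0)} \subset I_j\}$. Since $B_{I_j}$ inherits nonzero diagonal entries from $B$, Theorem~\ref{Tsimult} applied to $(A_{I_j}, B_{I_j})$ yields
\[
\bigcap_{n \geq 0} \ker(B_{I_j} \circ A_{I_j}^{\circ n}) = \bigoplus_{k : I_k^{(0)} \subset I_j} \ker B_{I_k^{(0)}}.
\]
Summing over $j$ and comparing with Theorem~\ref{Tsimult} applied to $(A, B)$ itself, one obtains $\mathcal{K}_\pi(A, B) = \bigoplus_k \ker B_{I_k^{(0)}} = \mathcal{K}_{\{\{1, \ldots, N\}\}}(A, B)$.

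\textbf{Reverse inclusion for $B \in \bp_N^1(\C)$.} Write $B = \mathbf{b} \mathbf{b}^{*}$, where $\mathbf{b}$ has no zero entries. Since the submatrix claim applies to every $\pi$, Theorem~\ref{Tsimult} expresses $\mathcal{K}_\pi(A, B)$ as the set of $\mathbf{v} \in \C^N$ satisfying $\mathbf{b}_{I_k^{(0)} \cap I_j}^{*} \mathbf{v}_{I_k^{(0)} \cap I_j} = 0$ for every non-empty intersection, while $\mathcal{K}_{\{\{1, \ldots, N\}\}}(A, B)$ is cut out by the $k_0$ conditions $\mathbf{b}_{I_k^{(0)}}^{*} \mathbf{v}_{I_k^{(0)}} = 0$. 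The refined constraints have pairwise disjoint supports and all-nonzero coefficients, hence are linearly independent, and each original constraint is the sum of its refined counterparts. Comparing codimensions, the two kernels coincide iff every $I_k^{(0)}$ meets exactly one $I_j$, i.e., iff $\pi \prec \pi^{\{1\}}(A)$. The principal obstacle is the submatrix claim itself: rigorously verifying that $\pi^{\{1\}}(A_{I_j})$ consists exactly of the non-empty restrictions $I_k^{(0)} \cap I_j$ requires careful bookkeeping that combines the explicit constant-block factorization from Theorem~\ref{Tgroup}, the maximality definition of $\pi^{\{1\}}(A)$, and the positivity-driven vanishing of zero-$\alpha$ blocks.
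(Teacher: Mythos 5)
Your forward inclusion follows essentially the same route as the paper: both reduce to applying Theorem~\ref{Tsimult} to each diagonal block $(A_{J_p}, B_{J_p})$ and then use $\bigsqcup_p \pi^{\{1\}}(A_{J_p}) = \pi^{\{1\}}(A)$, which is automatic when each $J_p$ is a union of blocks of $\pi^{\{1\}}(A)$.

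For the reverse inclusion, however, you take a genuinely different route. The paper works by contraposition and produces an explicit witness: given $i_1, i_2$ in the same block of $\pi^{\{1\}}(A)$ but different parts of $\pi$, the vector with $i_1$th entry $b_{i_1,i_2}$, $i_2$th entry $-b_{i_1,i_1}$ and zeros elsewhere lies in $\mathcal{K}_{\pi^{\{1\}}(A)}(A,B)$ but not in $\mathcal{K}_\pi(A,B)$. Your argument instead counts codimensions: expressing both kernels as intersections of linearly independent functionals $\mathbf{b}_I^* \mathbf{v}_I$ with pairwise disjoint supports, and observing that the refined conditions defining $\mathcal{K}_\pi(A,B)$ always imply the coarse ones, so equality of the kernels is equivalent to equality of the number of constraints, which in turn is equivalent to $\pi \prec \pi^{\{1\}}(A)$. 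This is a clean conceptual argument, but it buys its generality at the cost of requiring the submatrix claim $\pi^{\{1\}}(A_{I_j}) = \{I_k^{(0)} \cap I_j : I_k^{(0)} \cap I_j \neq \emptyset\}$ for \emph{arbitrary} subsets $I_j$, not merely unions of original blocks — a genuinely stronger statement than what the paper uses, and one you acknowledge but do not fully verify. The claim is in fact true: if $J \subset I_j$ were a constant block of $A_{I_j}$ straddling two original blocks $I_k^{(0)}$ and $I_l^{(0)}$, picking $m \in J \cap I_k^{(0)}$ and $n \in J \cap I_l^{(0)}$ gives $\alpha_k = a_{mm} = a_{mn} = a_{nn} = \alpha_l$; if this common value is nonzero then the constant off-diagonal block $A_{I_k^{(0)} \times I_l^{(0)}}$ has value $\gamma_{kl} = a_{mn} = \alpha_k$, so $A_{I_k^{(0)} \cup I_l^{(0)}}$ would be constant, contradicting maximality, while if the value is zero then positivity forces $A_{I_k^{(0)} \cup I_l^{(0)}} = 0$, again a contradiction. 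With this lemma supplied, your proof is complete; the paper's explicit-vector construction simply avoids the need for it. Both arguments also depend on the factorization $B = \mathbf{b}\mathbf{b}^*$ with $\mathbf{b}$ entrywise nonzero, which your write-up correctly uses.
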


In particular, for any subgroup $G \subset \C^\times$,
Equation~\eqref{Esimult} holds with $\pi^{S^1}( A )$ replaced by
$\pi^G( A )$.

\begin{proof}
Suppose $\pi^{\{ 1 \}}( A ) = \{ I_1, \ldots, I_k \}$ is a refinement
of $\pi = \{ J_1, \ldots, J_l \}$. Then, restricting to each
$J_p \times J_p$ diagonal block,
\[
\mathcal{K}_{\pi|_{J_p}}( A_{J_p}, B_{J_p} ) := %
\bigcap_{n \geq 0} \ker( B_{J_p} \circ A_{J_p}^{\circ n} )
\]
equals $\mathcal{K}_{\pi^{\{ 1 \}}(A_{J_p})}( A_{J_p}, B_{J_p})$, by
Theorem~\ref{Tsimult}. We are now done by taking the direct sum of the
previous equation over all $p$, since
$\bigsqcup_{p = 1}^l \pi^{\{ 1 \}}(A_{J_p}) = \pi^{\{ 1 \}}( A )$.

Conversely, suppose that $B \in \bp_N^1( \C )$ and that
$\pi^{\{ 1 \}}( A)$ is not a refinement of $\pi$. Then, without loss
of generality, there exist indices $i_1$ and $i_2$ which lie in
distinct parts of $\pi$ but the same part of $\pi^{\{ 1 \}}( A )$.
The vector with $i_1$th entry equal to $b_{i_1, i_2}$, its $i_2$th
entry equal to $-b_{i_1, i_1}$ and all other entries equal to $0$,
lies in $\mathcal{K}_{\pi^{\{ 1 \}}( A )}( A, B )$, but it does not
lie in $\mathcal{K}_\pi( A, B )$.
\end{proof}

\section{Conclusion and survey of known results}

It is the aim of the present section to discuss, from a unifying point
of view, a collection of old and new computations of sharp bounds for
extreme critical values of certain matrix pencils. As mentioned in the
introduction, this was a recurrent theme, motivated by theoretical and
very applied problems, spanning more than half a century.

In all the examples which follow, we identify a numerical evaluation
of the extreme critical value of a concrete matrix pencil.
An authoritative source for the spectral theory of polynomial pencils of
matrices is~\cite{Markus}.

Specifically, Equation~\eqref{Eschoenberg1} below provides an
accessible, often computationally effective, way of expressing the
rather elusive $\mathcal{C}( h; g; \bp )$, which is, by definition,
the smallest real constant $C$ satisfying
\[
g[ A ] \leq C h[ A ], \qquad \mbox{for all matrices } A \in \bp.
\]
We do not exclude above the case $C = \infty$, which means that no
uniform bound between $g[ A ]$ and $h[ A ]$ exists. A second general
observation is the stability of the bound as a function of the matrix
set: more precisely, quite a few examples below share the property
\[
\mathcal{C}( h; g; \bp ) = \mathcal{C}( h; g; \bp' ),
\]
where $\bp' \subset \bp$ is a much smaller class of matrices.

\begin{enumerate}
\item The first of Schoenberg's celebrated theorems proved in
\cite{Schoenberg42} involves convergent series in Gegenbauer
polynomials. The result can be formulated as a matrix pencil
critical-value problem, as follows: fix an integer $d \geq 2$, set
$K := [ {-1}, 1 ]$, and define
\[
h( z ) = \sum_{n \geq 0} h_n C_n^{( \lambda )}( z ) %
\qquad \text{and} \qquad %
g( z ) = \sum_{n \geq 0} g_n C_n^{( \lambda )}( z ),
\]
where $h_n \in [ 0, \infty )$, $g_n \in \R$,
$\lambda = ( d - 2 ) / 2$, and $C_n^{( \lambda )}$ is the
corresponding Gegenbauer or Chebyshev polynomial. Also let $\bp$
denote the set of all correlation matrices with rank at most $d$ but
of arbitrary dimension. Then Schoenberg's Theorem~\ref{Tspheres}(1)
asserts that
\begin{equation}\label{Eschoenberg1}
\mathcal{C}( h; g; \bp ) = \sup_{n : g_n > 0} \frac{g_n}{h_n}.
\end{equation}
When $h_n = 0$ and $g_n > 0$ for some index $n$, the constant
$\mathcal{C}( h; g; \bp )$ is equal to infinity, whence there is no
uniform bound for $g[ A ]$ in terms of $h[ A ]$, when taken over all
matrices $A \in \bp$.

\item Schoenberg's second landmark result from \cite{Schoenberg42}, as
well as its subsequent extensions by Christensen and Ressel, Hiai, and
others (see the references after Theorem~\ref{Tspheres}), can also be
rephrased as an extreme critical-value problem, as follows. Set
$K := ( {-\rho}, \rho )$, where $0 < \rho \leq \infty$, and consider
the convergent power series
\begin{equation}\label{Eschoenberg2}
h( z ) = \sum_{n \geq 0} h_n z^n, \ %
g( z ) = \sum_{n \geq 0} g_n z^n : ( {-\rho}, \rho ) \to \R,
\end{equation}
where $h_n \in [ 0, \infty )$ and $g_n \in \R$ for all $n$, and
$\bp = \bigcup_{N \geq 1} \bp_N( ( {-\rho}, \rho ) )$. Then
$\mathcal{C}( h; g; \bp )$ can be computed as in
Equation~\eqref{Eschoenberg1}.

\item In the papers \cite{Kahane-Rudin,Rudin59}, Rudin, working with
Kahane, proved that preserving positivity on low-rank Toeplitz
matrices already implies absolute monotonicity. In this case, set
$K := ( {-1}, 1 )$, and let $h( z )$ and $g( z )$ to be as in
Equation~\eqref{Eschoenberg2} with $\rho = 1$. Also let 
$\bp := \bigcup_{N \geq 1} \bp_N( ( {-\rho}, \rho) )$ and let
$\bp'$ denote the set of Toeplitz matrices  of all dimensions and of
rank at most~$3$. Rudin showed in \cite[Theorem~IV]{Rudin59} that it
suffices to test the pencil bound on the set
$\bp_\alpha := \{ M( a, b , n, \alpha ) : 0 \leq a, b, \ a + b < 1,
n \geq 1 \}$
for any irrational multiple~$\alpha$ of~$\pi$, where
$M( a, b, n, \alpha ) \in \bp_n^3( [ {-1}, 1 ] )$ is the Toeplitz
matrix with $( j, k )$th entry $a + b \cos( ( j - k ) \alpha)$.

Then
$\mathcal{C}( h; g; \bp ) = \mathcal{C}( h; g; \bp' ) = %
\mathcal{C}( h; g; \bp_\alpha )$
and Equation~\eqref{Eschoenberg1} holds.

\item A necessary condition for preserving positivity  in fixed
dimension was provided by Horn in \cite{horn}. The condition was
subsequently extended by Guillot--Khare--Rajaratnam \cite{GKR-lowrank}
and is stated in Theorem~\ref{Thorn} above. This yields a special case
of the extreme critical-value problem, with
\begin{equation}\label{Eschoenberg3}
h( z ) = \sum_{n = 0}^{N - 1} h_n z^n \quad \text{and} \quad %
g( z ) = \sum_{n = 0}^{N - 1} g_n z^n,
\end{equation}
where $h_n$, $g_n \in [ 0, \infty )$ for all $n$, and
$\bp = \bp_N^2( ( 0, \rho ) )$. Then
$\mathcal{C}( h; g; \bp ) = %
\mathcal{C}( h; g; \bp_N( ( 0, \rho ) ) )$
and Equation~\eqref{Eschoenberg1} holds.

\item The problem of preserving positivity has recently attracted
renewed attention, due to its application in the regularization of
ultra high-dimensional covariance matrices. The next few observations
are along those lines. First, in
\cite[Proposition~3.17(3)]{GKR-lowrank}, the authors consider a more
general situation than the previous instance, where one replaces the
polynomials $h$, $g$ of degree at most $N - 1$ by one of the following.
\begin{itemize}
\item A linear combination of $N$ fractional powers
$z^\alpha$, with $K = ( 0, \rho )$ and $\bp = \bp_N^1( K )$; here
$\alpha$ can be negative.

\item A linear combination of $N$ fractional powers
$z^\alpha$ and the constant function $1$, with $K = [ 0, \rho )$ and
$\bp = \bp_N^1( K )$; here $0^\alpha := 0$ for $\alpha \in \R$.

\item A linear combination of $N$ fractional powers of the form
$\phi_\alpha( z ) := | z |^\alpha$ or
$\psi_\alpha( z ) := \sgn( z ) | z |^\alpha$, with
$K = ( {-\rho}, \rho )$ and $\bp = \bp_N^1( K )$; here
$\phi_\alpha( 0 ) = \psi_\alpha( 0 ) := 0$.
\end{itemize}
In each case, if $h_n$, $g_n \in [ 0, \infty )$ for all $n$, with both
$h$ and $g$ involving the same set of fractional powers, then
$\mathcal{C}( h; g; \bp ) = \mathcal{C}( h; g; \bp_N( K ) )$ and
Equation~\eqref{Eschoenberg1} holds.

\item Note that Lemma~\ref{Lpos-coeff} also yields an extreme critical
value that can be deduced from either of the two previous cases. In
particular, if $h$ and $g$ are power series, as in
Equation~\eqref{Eschoenberg2}, and
we define 
$\bp = \bigcup_{N \geq 1} \bp_N( ( 0, \rho ) )$  and
$\bp' = \bigcup_{N \geq 1} \bp_N^1( ( 0, \rho ) )$, then
$\mathcal{C}( h; g; \bp ) = \mathcal{C}( h; g; \bp')$ and
Equation~\eqref{Eschoenberg1} holds. This result was also shown, using
alternate approaches, in \cite{GKR-lowrank}.

\item Another application involves entrywise functions preserving
positivity on matrices with zero structure according to a tree graph
$T$. Recall that for a graph $G$ with vertex set $\{ 1, \ldots, N \}$,
and a subset $K \subset \C$, the cone $\bp_G( K )$ is defined to be
the set of matrices $A = ( a_{i j} ) \in \bp_N( K )$ such that if
$i \neq j$ and $( i, j )$ is not an edge in $G$, then $a_{i j} = 0$.
An extreme critical-value phenomenon was shown in
\cite{Guillot_Khare_Rajaratnam2012}: fix powers
\[
0 < r' < r < s < s' < \infty, \quad r > 1,
\]
as well as a measurable set $B \subset ( r, s )$. Also fix scalars
$a_{r'}$, $a_r$, $a_s$, $a_{s'} > 0$ and a measurable function
$l : B \to \R$. Now define
\[
h( z ) = a_{r'} z^{r'} + a_r z^r + a_s z^s + a_{s'} z^{s'}, \qquad
g( z ) = \int_B l( z ) \std z \qquad ( z \in \R ),
\]
and $\bp = \bigcup_{T \in \cals} \bp_T( ( 0, \infty ) )$, where
$\cals$ is a non-empty set of connected trees on at least $3$
vertices. Then \cite[Theorem~4.6]{Guillot_Khare_Rajaratnam2012} shows
the existence of a  finite threshold:
$\mathcal{C}( h; g; \bp ) \in ( 0, \infty )$. Note that this is an
existence result, in contrast to the sharp bounds obtained in all
previous examples.

\item We turn now to the present paper, in which extreme critical
values were obtained for various families of linear pencils. For all
of the remaining examples, fix the following notation:
$N \geq 1$ and $M \geq 0$ are integers, $\rho \in ( 0, \infty )$, and
\begin{align}\label{Enotation}
c_0, c_1, \ldots, c_{N - 1} \in ( 0, \infty ), \notag \\
h_\bc( z ) = c_0 + \cdots + c_{N - 1} z^{N - 1}, \notag \\
\text{and} \quad \bp_N^1( ( 0, \rho ) ) \subset \bp \subset %
\bp_N( \overline{D}( 0, \rho ) ).
\end{align}
Then the main result of the present paper, Theorem~\ref{Tthreshold},
says that
\[
\mathcal{C}( h_\bc; z^M; \bp ) = %
\sum_{j = 0}^{N - 1} \binom{M}{j}^2 \binom{M - j - 1}{N - j - 1}^2 %
\frac{\rho^{M - j}}{c_j}.
\]
Note that if $M = N$, $\bp = \bp_N( [ {-\rho}, \rho ] )$, and $N$
tends to infinity, then the extreme critical value grows without
bound, thereby recovering Schoenberg's result discussed in the second
example above.

\item The $\R_+$-subadditivity of $\mathcal{C}( h; g; \bp )$ in its
second argument, as in Equation~\eqref{Esubadditive}, immediately
yields, for a polynomial $g( z ) = \sum_{n = 0}^M g_n z^n$, that
\[
\mathcal{C}( h_\bc; g; \bp ) \leq %
\sum_{n : g_n > 0} g_n \mathcal{C}( h_\bc; z^n; \bp ),
\]
where the notation is as in~\eqref{Enotation}.

\item When $g( z ) = \sum_{M = N}^\infty c_M z^M$ is analytic on
$D( 0, \rho )$ and continuous on $\overline{D( 0, \rho )}$, with real
coefficients, Theorem~\ref{Tanalytic} yields a bound on the corresponding
extreme critical value:
\[
\mathcal{C}( \bc; g; N, \rho ) \leq %
\frac{g_2^{( 2 N - 2 )}( \sqrt{\rho} )}{2^{N - 1} ( N - 1)!^2} %
\sum_{j = 0}^{N - 1} \binom{N - 1}{j}^2 \frac{\rho^{N - j - 1}}{c_j},
\]
where $g_+( z ) = \sum_{M \geq N : c_M > 0} c_M z^M$ and
$g_2( z ) = g_+( z^2 )$.

\item In the case of $2 \times 2$ matrices, fix non-negative integers
$m < n < p$, set $h_2( z ) = c_m z^m + c_n z^n$, and suppose
$\bp_2^1( [ 0, 1 ] ) \subset \bp' \subset \bp_2( [ 0, 1 ] )$. Then
Theorem~\ref{T2x2} shows that
\[
\mathcal{C}( h_2; z^p; \bp') = %
\frac{c_m c_n ( n - m )^2}{c_m ( p - m )^2 + c_n ( p - n )^2}.
\]

\item The next instance involves Rayleigh quotients for Hadamard
powers. In this case, we let $\bp$ be the set containing a single
non-zero matrix $A \in \bp_N( \C )$. Define
\[
\mathcal{K}( A ) = \ker h_\bc[ A ] = %
\ker( c_0 \one{N} + c_1 A + \cdots + c_{N - 1} A^{\circ ( N - 1 )} ).
\]
Then Propositions~\ref{Palgebra} and \ref{Ppos-constant} show that,
with notation as in~\eqref{Enotation},
\[
\mathcal{C}( h_\bc; z^M; A ) = %
\max_{\bv \in S^{2 N - 1} \cap \mathcal{K}( A )^\perp} %
\frac{\bv^* A^{\circ M} \bv}{\bv^* h_\bc[ A ] \bv} = %
\varrho( h_\bc[ A ]^{\dagger / 2} A^{\circ M} h_\bc[ A ]^{\dagger / 2} ) %
< \infty.
\]
In particular,
$\mathcal{C}( h_\bc; z^M; \bu \bu^* ) = %
( \bu^{\circ M} )^* h_\bc[ \bu \bu^* ]^\dagger \bu^{\circ M}$
for all non-zero $\bu \in \C^N$, by Corollary~\ref{Cgrquot}.

\item Our final example involves an application of Theorem
\ref{Tsimult}, in which we obtained a stratification of the cone
$\bp_N( \C )$ by the set $\Pi_N$ of partitions of
$\{ 1, \ldots, N \}$. Given a partition $\pi \in \Pi_N$, define the
stratum $\cals_\pi^{\{ 1 \}}$ as in Equation~\eqref{Estrata2}. The key
observation is that the simultaneous kernel map
$A \mapsto \mathcal{K}( A )$ is constant on each stratum. In other
words, the map
\[
\mathcal{K} : \bp_N( \C ) \longrightarrow \Pi_N \longrightarrow %
\bigsqcup_{r = 0}^{N - 1} \Gr( r, \C^N )
\]
sends every matrix $A \in \cals_\pi^{\{ 1 \}}$ to a fixed subspace
\[
\mathcal{K}_\pi := %
\ker \sum_{j = 1}^{| \pi |} {\bf 1}_{I_j \times I_j} \in %
\Gr( N - | \pi |, \C^N ),
\]
where the $N \times N$ matrix ${\bf 1}_{E \times F}$ has $( i , j )$th
entry equal to~$1$ if $( i, j) \in E \times F$ and equal to $0$
otherwise, $| \pi | = k$ denotes the number of parts in the
partition~$\pi = \{ I_1, \ldots, I_k \}$, and $\Gr( r, \C^N )$ denotes
the complex Grassmann manifold of $r$-dimensional subspaces of~$\C^N$.

Now suppose $h( z )$ is as in~\eqref{Enotation},
\[
g( z ) = \sum_{n=0}^M g_n z^n \quad ( g_n \in [ 0, \infty) ), %
\quad \text{and} \quad %
\bp = \bigsqcup_{\pi \in \Pi_N} \cals^0_\pi,
\]
where $\cals^0_\pi \subset \cals_\pi^{\{ 1 \}}$ is a compact subset of
the corresponding stratum. Then
\begin{align}\label{Estrata}
\mathcal{C}( h_\bc; g; \bp ) & \leq %
\sum_{n : g_n > 0} g_n \mathcal{C}( h_\bc; z^n; \bp ) \notag \\
 & = \sum_{n : g_n > 0} g_n \max_{\pi \in \Pi_N} %
\max\Bigl\{ \frac{\bv^* A^{\circ n} \bv}%
{\bv^* h_\bc[ A ] \bv} : \bv \in S^{2 N-  1} \cap \mathcal{K}_\pi^\perp, %
\ A \in \cals^0_\pi \Bigr\},
\end{align}
and this is a finite number because the inner maximum is taken over a
product of compact sets, and the function being optimized is
continuous in both variables.

It would be interesting to investigate the jumping locus of the map
$A \mapsto \mathcal{C}(h_\bc; z^M; A)$. In particular, is this map
continuous on the stratum $\cals_\pi^{\{ 1 \}}$, for each partition
$\pi \in \Pi_N$?

More generally, given any matrix $B \in \bp_N(\C)$ with no zero
diagonal entries, Theorem~\ref{Tsimult} provides a stratification
$\bp_N( \C ) = \bigsqcup_{\pi \in \Pi_N} \cals_{B, \pi}^{\{ 1 \}}$
in a similar vein to the above; thus
$\cals_\pi^{\{ 1 \}} = \cals_{\one{N}, \pi}^{\{ 1 \}}$
for all $\pi \in \Pi_N$. Once again, the map $\mathcal{K}$ is constant
on each stratum, sending $\cals_{B, \pi}^{\{ 1 \}}$ to
$\mathcal{K}_{B, \pi}$, say. Then, by Proposition~\ref{Ppos-constant}
and Lemma~\ref{Lalgebra}, the inequality in~\eqref{Estrata}
generalizes, for each fixed $B$, with $h_\bc[ A ]$ and $A^{\circ n}$
replaced by $B \circ h_\bc[ A ]$ and $B \circ A^{\circ n}$ for all $n$,
respectively.  See the preceding section for full details.
\end{enumerate}

\subsection*{List of symbols}

A few ad hoc notations were introduced in the text. We list them below
for the convenience of the reader.

\begin{itemize}
\item $\bp_N^k( K )$ is the set of positive semidefinite $N \times N$
matrices with entries in a subset $K \subset \C$ and of rank at most
$k$.

\item  $\bp_N( K ) := \bp_N^N( K )$.

\item $A^{\circ k}$ is the matrix obtained from $A$ by taking the
$k$th power of each entry.

\item $\one{N}$ is the $N \times N$ matrix with each entry equal
to~$1$.

\item $f[ A ]$ is the result of applying $f$ to each entry of the
matrix~$A$.

\item $\mathcal{C}( h; g; \bp )$ is the smallest non-negative
constant satisfying
$g[A] \leq \mathcal{C}( h; g; \bp ) h[ A ]$ for all $A \in \bp$.

\item $\varrho( A )$ is the spectral radius of the matrix $A$.
\end{itemize}




\end{document}